\newtheorem{thm}{Theorem}[section]
\newtheorem{lem}[thm]{Lemma}
\newtheorem{prop}[thm]{Proposition}
\newtheorem{cor}[thm]{Corollary}
\theoremstyle{definition}
\newtheorem{defn}[thm]{Definition}
\newtheorem{defns}[thm]{Definitions}
\theoremstyle{remark}
\newtheorem{remark}[thm]{Remark}
\newtheorem{remarks}[thm]{Remarks}
\newtheorem{example}[thm]{Example}
\numberwithin{equation}{section}
\newcommand{\K}{{\mathbb K}} 
\newcommand{\Z}{{\mathbb Z}} \newcommand{\R}{{\mathbb R}}
 \newcommand{\C}{{\mathbb C}}
\newcommand{\sph}{{\mathbb S}}
\newcommand{\an}{{\mathcal O}} 
 \newcommand{\J}{{\mathcal J}}
 \newcommand{\I}{{\mathcal I}}
\newcommand{\ideal}{{\mathcal I}}
\newcommand{\ceros}{{\mathcal Z}}
\newcommand{\gtp}{{\mathfrak p}} \newcommand{\gtq}{{\mathfrak q}}
\newcommand{\gtm}{{\mathfrak m}} \newcommand{\gtn}{{\mathfrak n}}
\newcommand{\gta}{{\mathfrak a}} \newcommand{\gtb}{{\mathfrak b}}
 \newcommand{\gtA}{{\mathfrak A}}
\newcommand{\Jhaz}{{\EuScript I}}
\newcommand{\Ss}{{\EuScript S}}
\newcommand{\Reg}{\operatorname{Reg}}
\newcommand{\Sing}{\operatorname{Sing}}
\newcommand{\Int}{\operatorname{Int}}
\newcommand{\cl}{\operatorname{Cl}}
\newcommand{\supp}{\operatorname{supp}}
\newcommand{\id}{\operatorname{id}}
\newcommand{\zar}{\operatorname{zar}}
\newcommand{\veps}{\varepsilon}
\newcommand{\ol}{\overline}
\numberwithin{equation}{section}
\begin{document}
\title[On the irreducible components of globally defined semianalytic sets]{On the irreducible components\\ of globally defined semianalytic sets}

\author{Jos\'e F. Fernando}
\address{Departamento de \'Algebra, Facultad de Ciencias Matem\'aticas, Universidad Complutense de Madrid, 28040 MADRID (SPAIN)}
\email{josefer@mat.ucm.es}

\date{21/04/2015}
\subjclass[2010]{14P15, 58A07, 32C25 (primary); 26E05, 32C20 (secondary)}
\keywords{$C$-analytic and $C$-semianalytic sets, amenable $C$-semianalytic sets, Zariski closure, irreducibility, irreducible components, analytic normalization}

\thanks{Author supported by Spanish GAAR MTM2011-22435, Spanish MTM2014-55565, Grupos UCM 910444 and the ``National Group for Algebraic and Geometric Structures, and their Applications'' (GNSAGA - INdAM). This article has been mainly written during a one-year research stay of the author in the Dipartimento di Matematica of the Universit\`a di Pisa. The author would like to thank the department for the invitation and the very pleasant working conditions. The one-year research stay of the author is partially supported by MECD grant PRX14/00016.}

\begin{abstract}--- \ 
In this work we present the concept of \em amenable $C$-semianalytic subset \em of a real analytic manifold $M$ and study the main properties of this type of sets. Amenable $C$-semianalytic sets can be understood as globally defined semianalytic sets with a neat behavior with respect to Zariski closure. This fact allows us to develop a natural definition of \em irreducibility \em and the corresponding \em theory of irreducible components \em for amenable $C$-semianalytic sets. These concepts generalize the parallel ones for: complex algebraic and analytic sets, $C$-analytic sets, Nash sets and semialgebraic sets. 
\end{abstract}
\maketitle

\section{Introduction}\label{s1}

Irreducibility and irreducible components are usual concepts in Geometry and Algebra. Both concepts are strongly related with prime ideals and primary decomposition of ideals. There is an important background concerning this matter in Algebraic and Analytic Geometry. These concepts has been satisfactorily developed for complex algebraic sets (Lasker-N\"other \cite{la}), complex analytic sets and Stein spaces (Cartan \cite{c1}, Forster \cite{of}, Remmert-Stein \cite{rs}), global real analytic sets introduced by Cartan (also known as $C$-analytic sets, Whitney-Bruhat \cite{wb}) and Nash sets (Efroymson \cite{e}, Mostowski \cite{m}, Risler \cite{r}). 

Recall that a subset $X$ of a real analytic manifold $M$ is \em (real) analytic \em if for each point $x\in M$, there exists an open neighborhood $U^x$ such that $X\cap U^x=\{f_1=0,\ldots,f_r=0\}\subset U^x$ for some $f_1,\ldots,f_r\in\an(U^x)$. The global behavior of real analytic sets could be wild as it is shown in the exotic examples presented in \cite{bc,c,wb} and this blocks the possibility of having a reasonable concept of irreducibility. A subset $X\subset M$ is a \em $C$-analytic set \em if $X=\{f_1=0,\ldots,f_r=0\}$ for some $f_1,\ldots,f_r\in\an(M)$. The difference with analytic sets focuses on the global character of the functions defining $X$. As we have already mentioned, $C$-analytic sets have a good global behavior that enables a consistent concept of irreducibility.

In Real Geometry also appear naturally sets described by inequalities. Recall that a subset $S\subset\R^n$ is \em semialgebraic \em if it has a description as a finite boolean combination of polynomial equalities and inequalities. In \cite{fg} we presented consistent concepts of irreducibility and irreducible components and it is natural to wonder it they extend to the semianalytic setting.

A subset $S$ of a real analytic manifold $M$ is \em semianalytic \em if for each point $x\in M$ there exists an open neighborhood $U^x$ such that $S\cap U^x$ is a finite union of sets of the type $\{f=0,g_1>0,\ldots,g_r>0\}\subset U^x$ where $f,g_i\in\an(U^x)$ are analytic functions on $U^x$. This class, that includes analytic sets, is too large to afford the concepts of irreducibility and irreducible components and we must reduce it in order to have any chance of success. As it happens with $C$-analytic sets, some `global restriction' should be required. In \cite{abf1} we have recently introduce a class of globally defined semianalytic sets.

A subset $S$ of a real analytic manifold $M$ is a \em $C$-semianalytic set \em if for each point $x\in M$, there exists an open neighborhood $U^x$ such that $S\cap U^x$ is a finite boolean combination of equalities and inequalities on $M$, that is, $S\cap U^x$ is a finite union of sets of the type $\{f=0,g_1>0,\ldots,g_r>0\}$ where $f,g_i\in\an(M)$ are global analytic functions. The difference with classical semianalytic sets concentrates on the fact that the analytic functions defining $S$ in a small neighborhood of each point of $M$ are global. A particular relevant family of $C$-semianalytic sets is that of global $C$-semianalytic sets \cite{ac,rz1,rz2,rz3}. A {\em global $C$-semianalytic subset $S$ of $M$} is a finite union of {\em basic $C$-semianalytic sets}, that is, a finite union of sets of the type $\{f=0,g_1>0,\ldots,g_r>0\}$ where $f,g_j\in\an(M)$. One deduces using paracompactness of $M$ that $C$-semianalytic sets coincide with locally finite unions of global $C$-semianalytic sets \cite[Lem.3.1]{abf1}. We refer the reader to \cite{abf1} for a careful study of $C$-semianalytic sets. 

A second requirement to avoid pathologies should be that `Zariski closure preserve dimensions'. The \em Zariski closure \em of a subset $E\subset M$ is the smallest $C$-analytic subset $X$ of $M$ that contains $E$. We define the dimension of a $C$-semianalytic set $S\subset M$ as $\dim(S):=\sup_{x\in M}\{\dim(S_x)\}$ and refer the reader to \cite[VIII.2.11]{abr} for the dimension of semianalytic germs. The Zariski closure of a $C$-semianalytic set is in general a $C$-analytic set of higher dimension (see Example \ref{ojo}). To guarantee a satisfactory behavior of Zariski closure we need an even more restrictive concept. A subset $S\subset M$ is a \em amenable $C$-semianalytic set \em if it is a finite union of $C$-semianalytic sets of the type $X\cap U$ where $X\subset M$ is a $C$-analytic set and $U\subset M$ is an open $C$-semianalytic set. In particular the Zariski closure of $S$ has the same dimension as $S$. 

In this work we prove that amenable $C$-semianalytic sets admit a solid concept of \em irreducibility \em and a theory of \em irreducible components\em. Both generalize the parallel ones for: complex algebraic and analytic sets, $C$-analytic sets, Nash sets and semialgebraic sets mentioned above, that is, irreducibility or irreducible components of a set $S$ of one of the previous types coincides with irreducibility or irreducible components when we understand $S$ as an amenable $C$-semianalytic set. 

\subsection{Main results}
We present next the main results of this work.

\subsubsection{Main properties of amenable $C$-semianalytic sets}

The family of amenable $C$-semianalytic sets is closed under the following operations: finite unions and intersections, interior, connected components, sets of points of pure dimension $k$ and inverse images of analytic maps. However, it is not closed under: complement, closure, locally finite unions and sets of points of dimension $k$ (see Examples \ref{ex} and \ref{dimktamenot}). In addition a $C$-semianalytic set $S\subset M$ is amenable if and only if it is a locally finite contable union of basic $C$-semianalytic sets $S_i$ such that the family $\{\ol{S_i}^{\zar}\}_{i\geq1}$ of their Zariski closures is locally finite (after eliminating repetitions). As a consequence we show in Corollary \ref{lfuok2} that the union of a locally finite collection of amenable $C$-semianalytic sets whose Zariski closures constitute a locally finite family (after eliminating repetitions) is an amenable $C$-semianalytic set. 

\subsubsection{Images of amenable $C$-semianalytic sets under proper holomorphic maps}
Let $(X,\an_X)$ and $(Y,\an_Y)$ be reduced Stein spaces. Let $\sigma:X\to X$ and $\tau:Y\to Y$ be anti-involutions. Assume
$$
X^\sigma:=\{x\in X:\ x=\sigma(x)\}\quad\text{and}\quad Y^\tau:=\{y\in Y:\ y=\tau(y)\}
$$ 
are not empty sets. It holds that $(X^\sigma,\an_{X^\sigma})$ and $(Y^\tau,\an_{Y^\tau})$ are real analytic spaces. Observe that $(X,\an_X)$ and $(Y,\an_Y)$ are complexifications of $(X^\sigma,\an_{X^\sigma})$ and $(Y^\tau,\an_{Y^\tau})$. We say that a $C$-semianalytic set $S\subset X^\sigma$ is \em ${\mathcal A}(X^\sigma)$-definable \em if for each $x\in X^\sigma$ there exists an open neighborhood $U^x$ such that $S\cap U^x$ is a finite union of sets of the type $\{F|_{X^\sigma}=0,G_1|_{X^\sigma}>0,\ldots,G_r|_{X^\sigma}>0\}$ where $F,G_i\in\an(X)$ are invariant holomorphic sections. We denote the set of $\sigma$-invariant homolomorphic functions of $X$ restricted to $X^\sigma$ with ${\mathcal A}(X^\sigma)$.

\begin{thm}\label{properint-tame}
Let $F:(X,\an_X)\to(Y,\an_Y)$ be an invariant proper holomorphic map, that is, $\tau\circ F=F\circ\sigma$. Let $S\subset X^\sigma$ be a ${\mathcal A}(X^\sigma)$-definable and amenable $C$-semianalytic set and let $S'\subset Y^\tau$ be an amenable $C$-semianalytic set. We have 
\begin{itemize}
\item[(i)] $F(S)$ is an amenable $C$-semianalytic subset of $Y^\tau$ of the same dimension as $S$.
\item[(ii)] If $T$ is a union of connected components of $F^{-1}(S')\cap X^\sigma$, then $F(T)$ is an amenable $C$-semianalytic set.
\end{itemize}
\end{thm}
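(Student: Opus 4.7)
The key structural fact I would exploit at the outset is that a proper holomorphic map between reduced Stein spaces is automatically finite: each fibre is a compact Stein subspace, hence zero-dimensional. Consequently $F|_{X^\sigma}\colon X^\sigma\to Y^\tau$ is a proper finite real analytic map, which immediately yields the dimension equality in (i).

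For (i), I would first decompose $S$ as a finite union of basic amenable pieces $Z_k\cap U_k$ with $Z_k\subset X^\sigma$ a $C$-analytic set and $U_k\subset X^\sigma$ open $C$-semianalytic. The $\mathcal{A}(X^\sigma)$-definability hypothesis lets me take both cut out by $\sigma$-invariant global holomorphic sections on $X$, so each $Z_k$ is the $\sigma$-fixed part of an invariant complex analytic set $\widetilde{Z}_k\subset X$. Remmert's proper mapping theorem applied to $F|_{\widetilde{Z}_k}$ produces a $\tau$-invariant complex analytic set $F(\widetilde{Z}_k)\subset Y$, whose real trace is a $C$-analytic set $Z'_k\subset Y^\tau$ containing $F(Z_k)$. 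To describe $F(Z_k\cap U_k)$ as an amenable subset of $Z'_k$, I would combine Lojasiewicz's theorem on images of semianalytic sets under proper real analytic maps with the local structure of the finite map $F$: around each $y\in Y^\tau$ the set $F^{-1}(y)\cap X^\sigma$ is finite, $F$ splits into finitely many local analytic branches, and the $\mathcal{A}(X^\sigma)$-definable inequalities defining $U_k$ push forward along each branch to inequalities on $Y^\tau$ defined by $\tau$-invariant sections on $Y$ (obtained as elementary symmetric functions of their pullbacks along the branches). Collecting contributions over all branches expresses $F(S)$ as a finite union of sets of the form (C-analytic)$\cap$(open $C$-semianalytic).

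For (ii), I would first invoke closure of amenable $C$-semianalytic sets under inverse images of analytic maps (stated in the introduction) to conclude that $F^{-1}(S')\cap X^\sigma$ is amenable. Properness and finiteness of $F$ then imply that over every compact $K\subset Y^\tau$ the preimage $F^{-1}(K)\cap X^\sigma$ is compact and meets only finitely many connected components of $F^{-1}(S')\cap X^\sigma$; moreover the Zariski closure of each such component is contained in $F^{-1}(\overline{S'}^{\,\zar})$, whose finitely many $C$-analytic irreducible components bound the number of distinct Zariski closures that arise. Thus $T$ is a locally finite union whose Zariski closures form a locally finite family with finitely many distinct members, and Corollary \ref{lfuok2} gives that $T$ is amenable $C$-semianalytic. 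To conclude that $F(T)$ is amenable, I would use the local covering structure of $F$ to rewrite $F(T)$, around each $y\in Y^\tau$, as the intersection of $F(F^{-1}(S')\cap X^\sigma)$ with an open $C$-semianalytic subset of $Y^\tau$ picked out by which local branches of $F$ carry components of $T$, patch via paracompactness, and finally apply (i) to a suitable $\mathcal{A}(X^\sigma)$-definable amenable enlargement.

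The main obstacle in both parts is \emph{globality}: turning local descriptions, valid on neighborhoods where $F$ splits into finitely many branches, into a \emph{single} global finite union of amenable pieces on $Y^\tau$. The crucial mechanism is to push forward $\sigma$-invariant holomorphic sections on $X$ to $\tau$-invariant holomorphic sections on $Y$ by taking elementary symmetric functions of their values along the fibres of $F$. I expect the bulk of the technical work to lie precisely in verifying that this symmetrisation procedure can be carried out inside the $\mathcal{A}(X^\sigma)$-definable framework, and that it produces only finitely many distinct $C$-analytic strata in the image.
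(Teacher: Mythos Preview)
Your overall setup is sound---finiteness of fibres, the decomposition via Lemma~\ref{pd}, and Remmert's theorem are all used in the paper. However, the core mechanism you propose for (i), pushing forward the inequalities by taking elementary symmetric functions of $g$ along the fibres, does not do what you need. If $g\in\mathcal{A}(X^\sigma)$ and $F$ has degree $d$, the symmetric functions $\sigma_k(g)\in\mathcal{A}(Y^\tau)$ encode the \emph{multiset of values} of $g$ on each fibre, but the set $\{y:\exists\,x\in F^{-1}(y),\ g(x)>0\}$ is not cut out by sign conditions on the $\sigma_k(g)$ alone; it depends on the \emph{real roots} of a degree-$d$ polynomial in a way that requires quantifier elimination, not just symmetrisation. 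Moreover, even granting a local description, you still have to show the image is \emph{amenable}, not merely $C$-semianalytic, and your plan does not explain why only finitely many $C$-analytic strata appear globally.

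The paper avoids this entirely. It proceeds by induction on $\dim S$: after reducing to $X,Y$ irreducible of the same dimension and $S=V$ open, Lemma~\ref{strata} produces invariant complex analytic subsets $X'\subset X$, $Y'\subset Y$ of lower dimension with $F^{-1}(Y')=X'$ and such that $F|_{X\setminus X'}$ is a proper surjection of constant maximal rank between manifolds. The image $F(V\cap X')$ is amenable by induction; the image $F(V\setminus X')$ is $C$-semianalytic by the direct image theorem of \cite{abf1}, and it is \emph{open} in $Y^\tau$ because the rank theorem makes $F|_{(X\setminus X')^\sigma}$ an open map---hence automatically amenable. No explicit push-forward of inequalities is attempted.

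For (ii) your plan has a second gap: you want to apply (i) to an ``$\mathcal{A}(X^\sigma)$-definable amenable enlargement'' of $T$, but $T$ need not be $\mathcal{A}(X^\sigma)$-definable, and you give no construction of such an enlargement whose image recovers $F(T)$. The paper does \emph{not} reduce (ii) to (i); it runs a parallel induction. The key extra ingredient is that, after passing to suitable Stein neighbourhoods so that the relevant open pieces become $\mathcal{A}$-definable, the connected components of an $\mathcal{A}(X^\sigma)$-definable set are again $\mathcal{A}(X^\sigma)$-definable (this is \cite[Prop.~3.5]{abf1}), which lets one feed $T\cap F^{-1}(V_j\setminus Y')$ into the direct image theorem of \cite{abf1} piece by piece. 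Openness of $F(T\cap M)$ again comes from the rank theorem. Your claim that $F^{-1}(\overline{S'}^{\zar})$ has finitely many irreducible components is also incorrect (there can be countably many, locally finite), though this is not the essential obstruction.
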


\subsubsection{Tameness algorithm}
There exist many $C$-semianalytic sets that are not amenable, see Examples \ref{tamenot} and \ref{tamenot0}. In addition the family of amenable $C$-semianalytic sets is wider than the family of global $C$-semianalytic sets, see Example \ref{buttame}. In \ref{algorithmtame} we develop an algorithm, involving topological and algebraic operations, to determine if a $C$-semianalytic set is amenable.

\subsubsection{Irreducibility}
In the second part of the article we analyze the concept of irreducibility for amenable $C$-semianalytic sets and develop a theory of irreducible components that generalize the classical ones for other families of classical sets that admit a structure of amenable $C$-semianalytic set (complex algebraic and analytic sets, $C$-analytic sets, Nash sets, semialgebraic sets, etc.).
{\small
\begin{figure}[ht]
\centerline{\xymatrix{
*+[F]{\txt{Nash\\ sets}}\ar@{=>}[r]\ar@{=>}[rrd]&*+[F]{\txt{Semialgebraic\\ sets}}\ar@{=>}[r]&*+[F]{\txt{Global $C$-semi-\\analytic sets}}\ar@{=>}[r]&*+[F]{\txt{Tame $C$-semi-\\analytic sets}}\ar@{=>}[r]&*+[F]{\txt{$C$-semianalytic\\ sets}}\ar@{=>}[d]\\
*+[F]{\txt{Complex\\ algebraic sets}}\ar@{=>}[r]&*+[F]{\txt{Complex\\ analytic sets}}\ar@{:>}_{X^\R}[r]&*+[F]{\txt{$C$-analytic\\ sets}}\ar@{=>}[r]\ar@{=>}[u]&*+[F]{\txt{Analytic\\sets}}\ar@{=>}[r]&*+[F]{\txt{Semianalytic\\sets}}
}}
\caption{Relations between the different types of sets appearing in this work}
\end{figure}
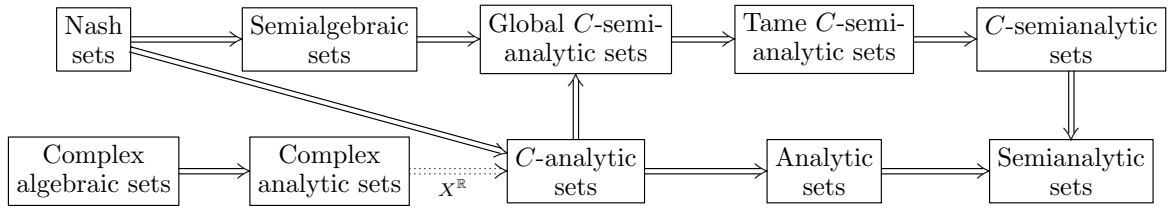
}

In the algebraic, complex analytic, $C$-analytic and Nash settings a geometric object is irreducible if it is not the union of two proper geometric objects of the same nature. In the amenable $C$-semianalytic setting this definition does not work because every $C$-semianalytic set with at least two points would be reducible. Indeed, if $p,q\in S$ and $W$ is open $C$-semianalytic neighborhood of $p$ in $M$ such that $q\not\in W$, it holds
$$
S=(S\cap W)\cup(S\setminus\{p\})
$$
where $S\cap W$ and $S\setminus\{p\}$ are amenable $C$-semianalytic sets.

In the previous settings the irreducibility of a geometric object $X$ is equivalent to the fact that the corresponding ring of polynomial, analytic or Nash functions on $X$ is an integral domain. This equivalence suggests us to attach to each amenable $C$-semianalytic set $S\subset M$ the ring $\an(S)$ of real valued functions on $S$ that admit an analytic extension to an open neighborhood of $S$ in $M$. We say that $S$ is \em irreducible \em if and only if $\an(S)$ is an integral domain.

Our definition extends the notion of irreducibility for $C$-analytic, semialgebraic and Nash sets. In addition if $X\subset\C^n$ is complex analytic set and $X^\R\subset\R^{2n}$ is its underlying real analytic structure, \em $X$ is irreducible as a complex analytic set if and only if $X^\R$ is irreducible as a $C$-semianalytic set\em.

\subsubsection{Irreducibility vs connectedness} The irreducibility of an amenable $C$-semianalytic set $S$ has a close relation with the connectedness of certain subset of the normalization of the Zariski closure of $S$. Let $S\subset M$ be an amenable $C$-semianalytic set and let $X$ be its Zariski closure. Let $(\widetilde{X},\sigma)$ be a Stein complexification of $X$ together with the anti-involution $\sigma:\widetilde{X}\to\widetilde{X}$ whose set of fixed points is $X$. Let $(Y,\pi)$ be the normalization of $\widetilde{X}$ and let $\widehat{\sigma}:Y\to Y$ be the anti-holomorphic involution induced by $\sigma$ in $Y$, which satisifies $\pi\circ\widehat{\sigma}=\sigma\circ\pi$.

\begin{thm}\label{dpm}
The amenable $C$-semianalytic set $S$ is irreducible if and only if there exists a connected component $T$ of $\pi^{-1}(S)$ such that $\pi(T)=S$.
\end{thm}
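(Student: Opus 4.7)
The plan proceeds via two directions after a preliminary reduction. The restriction map $\an(X)\to\an(S)$ is injective because $X$ is the Zariski closure of $S$ in $M$; hence if $X$ were reducible as a $C$-analytic set, zero divisors of $\an(X)$ coming from analytic functions vanishing on one $C$-analytic irreducible component but not another would restrict to zero divisors of $\an(S)$, forcing $S$ reducible. Similarly, the existence of a connected component $T$ of $\pi^{-1}(S)$ with $\pi(T)=S$ forces $X$ irreducible: writing $Y_{j_0}$ for the connected component of $Y$ containing $T$, the set $\pi(Y_{j_0})\cap X$ is an irreducible $C$-analytic subset of $X$ containing $S$, hence equals $X$ by minimality of the Zariski closure. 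I therefore assume throughout that $X$ is irreducible as a $C$-analytic set, $\widetilde X$ is irreducible as a complex analytic space, and $Y$ is connected, irreducible, and normal.

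For $(\Leftarrow)$, given $f\in\an(S)$, extend $f$ analytically to a neighborhood of $S$ in $M$, and complexify the germ along $S\subset X$ via the Stein complexification to obtain a holomorphic germ $\widetilde F$ along $S$ in $\widetilde X$; pulling back via $\pi$ yields a holomorphic germ $F^\star:=\widetilde F\circ\pi$ along $\pi^{-1}(S)$ in $Y$. This defines a ring homomorphism $\rho:\an(S)\to\an(T)$, injective since $\pi(T)=S$. The key technical step is to show that the complex Zariski closure $\widehat T$ of $T$ in $Y$ equals $Y$. For this, $\pi(\widehat T)\cup\sigma(\pi(\widehat T))$ is a $\sigma$-invariant complex analytic subset of $\widetilde X$ whose real part is $C$-analytic and contains $\pi(T)=S$, hence contains the Zariski closure $X$; by the minimality property of the Stein complexification this union equals $\widetilde X$, and the irreducibility of $\widetilde X$ forces $\pi(\widehat T)=\widetilde X$, whence $\widehat T=Y$. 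Then if $F^\star G^\star|_T=0$ for two holomorphic germs $F^\star,G^\star$ along $T$, the complex analytic set $\{F^\star=0\}\cup\{G^\star=0\}$ contains $T$, hence contains $\widehat T=Y$ inside a connected open neighborhood of $T$; irreducibility of $Y$ forces one of the factors to vanish identically, so $f=0$ or $g=0$ in $\an(S)$.

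For $(\Rightarrow)$, assume every connected component $T_n$ of $\pi^{-1}(S)$ satisfies $\pi(T_n)\subsetneq S$. I would construct zero divisors in $\an(S)$ by exploiting the disconnection of $\pi^{-1}(S)$ in $Y$. Picking a component $T_1$, the set $T_1\cup\widehat\sigma(T_1)$ is $\widehat\sigma$-invariant and clopen in $\pi^{-1}(S)$; using the local finiteness of $\{T_n\}$ and the Stein structure of $Y$, one finds $\widehat\sigma$-invariant open neighborhoods in $Y$ separating $T_1\cup\widehat\sigma(T_1)$ from the remaining components. Invoking the $\widehat\sigma$-equivariant Cartan Theorems A and B on $Y$ produces invariant holomorphic functions whose zero sets realize the desired separation; taking norms along the finite map $\pi$ descends them to $\sigma$-invariant holomorphic functions $\widetilde F_1,\widetilde F_2\in\an(\widetilde X)$. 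The restrictions $f_i:=\widetilde F_i|_S\in\an(S)$ can be arranged so that $f_1 f_2=0$ on $S$ but neither $f_1$ nor $f_2$ vanishes identically on $S$, whence $\an(S)$ has zero divisors.

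The principal obstacle, in my view, is the Zariski-density step $\widehat T=Y$ in $(\Leftarrow)$: this claim interweaves the amenability of $S$ (ensuring $\dim S=\dim X$), the minimality property of the Stein complexification (no proper $\sigma$-invariant complex analytic subset of $\widetilde X$ can contain $X$), and the irreducibility of $\widetilde X$, and its correctness is the linchpin of the entire argument. The construction in $(\Rightarrow)$ is more routine in spirit but delicate in execution: one must ensure both the $\widehat\sigma$-equivariant separation of $T_1\cup\widehat\sigma(T_1)$ from the other components inside $Y$ and the preservation of the vanishing/non-vanishing behavior when descending via the norm map to $\widetilde X$ and restricting to $S$.
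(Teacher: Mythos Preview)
Your two directions take quite different routes from the paper, and both have genuine gaps.

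\textbf{On $(\Leftarrow)$.} Your plan is to embed $\an(S)$ into a ring attached to $T$ and argue that the target is an integral domain via $\widehat T=Y$. The computation of $\widehat T=Y$ is fine, but the inference you draw from it is not. The germs $F^\star,G^\star$ live only on an open neighborhood $V$ of $T$ in $Y$; the set $\{F^\star=0\}\cup\{G^\star=0\}$ is closed analytic in $V$, not in $Y$. Knowing that no \emph{global} closed analytic subset of $Y$ properly contains $T$ does not prevent $T$ from lying in a proper analytic subset of $V$: Zariski closure in a smaller ambient space can shrink, because more analytic sets are available there. So the line ``hence contains $\widehat T=Y$ inside a connected open neighborhood of $T$'' is precisely the unproved step. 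The paper avoids this entirely: it invokes Lemma~\ref{charirred} (irreducibility of $S$ $\Leftrightarrow$ irreducibility of $\ol{S}^{\zar}_U$ for every neighborhood $U$), and for each $U$ it exhibits an irreducible complex analytic set $\pi(Z)$ of the right dimension containing $S$, where $Z$ is the connected component of $\pi^{-1}(\widetilde X\cap\Omega)$ containing $T$. No ring embedding, no Zariski-density argument for $T$.

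\textbf{On $(\Rightarrow)$.} Your proposal to manufacture zero divisors by separating $T_1\cup\widehat\sigma(T_1)$ from the other components, producing invariant holomorphic functions on $Y$, and pushing them down via norms, is not a proof as written. The phrase ``whose zero sets realize the desired separation'' cannot be taken literally: zero sets of holomorphic functions are complex analytic, whereas $T_1\cup\widehat\sigma(T_1)$ is a real semianalytic piece of $\pi^{-1}(S)$. Even granting invariant $H_1,H_2$ with $H_1H_2$ vanishing on $\pi^{-1}(S)$, the norm $N(H_1)$ vanishes at every $x\in S$ whose fiber meets $\{H_1=0\}$, and you have not explained why there remains a point of $S$ where $N(H_1)\neq 0$ and simultaneously another where $N(H_2)\neq 0$. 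The paper's argument is structurally different and does not build zero divisors at all: starting again from Lemma~\ref{charirred}, it \emph{chooses} a specific neighborhood $U$ by excising $\pi(C)$ from $\widetilde X$, where $C\subset Y$ is the closed complement of disjoint invariant open sets $\Theta_i\supset T_i$. This forces every connected component $K$ of $\pi^{-1}(\widetilde X\cap\Omega)$ to lie in a single $\Theta_{i_0}$; taking $K$ with $\pi(K)$ equal to the complex Zariski closure of the (irreducible) $\ol{S}^{\zar}_U$ then gives $\pi(T_{i_0})=\pi(K)\cap S=S$ directly.

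In short, both directions in the paper rest on Lemma~\ref{charirred} together with the topological trick of passing to a carefully chosen neighborhood in which the normalization components are separated; your proposal replaces this with ring-theoretic constructions that, as stated, do not close.
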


\subsubsection{Irreducible components} 
We develop a satisfactory theory of irreducible components for amenable $C$-semianalytic sets. As before, it holds that if $S$ is either $C$-analytic, semialgebraic or Nash, then its irreducible components as a set of the corresponding type coincide with the irreducible components of $S$ as an amenable $C$-semianalytic set. In addition if $X\subset\C^n$ is complex analytic set and $X^\R\subset\R^{2n}$ is its underlying real analytic structure, \em the underlying real analytic structures of the irreducible components of $X$ as a complex analytic set coincide with the irreducible components of $X^\R$ as a $C$-semianalytic set\em.

\begin{defn}[Irreducible components]\label{irredcomptame}
Let $S\subset M$ be an amenable $C$-semi\-analytic set. A countable locally finite family $\{S_i\}_{i\geq1}$ of amenable $C$-semianalytic sets that are contained in $S$ is \em a family of irreducible components of \em $S$ if the following conditions are fulfilled: 
\begin{itemize}
\item[(1)] Each $S_i$ is irreducible.
\item[(2)] If $S_i\subset T\subset S$ is an irreducible amenable $C$-semianalytic set, then $S_i=T$.
\item[(3)] $S_i\neq S_j$ if $i\neq j$.
\item[(4)] $S=\bigcup_{i\geq1} S_i$.
\end{itemize} 
\end{defn}

In Theorem \ref{irredcomp3} we prove the existence and uniqueness of the family of irreducible components of an arbitrary amenable $C$-semianalytic set $S$. In particular, we show the following.

\begin{thm}\label{bijection}
There exists a bijection between the irreducible components of an amenable $C$-semi\-analytic set $S\subset M$ and the minimal prime ideals of the ring $\an(S)$.
\end{thm}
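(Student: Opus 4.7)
The plan is to define, for each irreducible component $S_i$ furnished by Theorem \ref{irredcomp3}, the ideal $\gtp_i:=\ker\rho_i$, where $\rho_i:\an(S)\to\an(S_i)$ is restriction of functions; this is well defined because any analytic extension of $f\in\an(S)$ to a neighborhood of $S$ restricts to an analytic extension on a neighborhood of $S_i\subseteq S$. Since $S_i$ is irreducible by hypothesis, $\an(S_i)$ is an integral domain, and hence $\gtp_i$ is a prime ideal of $\an(S)$. The assertion will be that the map $\Phi:S_i\mapsto\gtp_i$ is the desired bijection onto the set of minimal primes of $\an(S)$.

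The technical core will be a separation statement: for every pair $i\neq j$ there should exist $f_{ij}\in\an(S)$ with $f_{ij}|_{S_j}\equiv0$ and $f_{ij}|_{S_i}\not\equiv0$. Since $S_i\not\subseteq S_j$ by condition (2) of Definition \ref{irredcomptame}, one fixes $x\in S_i\setminus S_j$. If $x\notin X_j:=\ol{S_j}^{\,\zar}$, Cartan's theorem for the $C$-analytic set $X_j$ yields such an $f_{ij}\in\an(M)$ directly. In the harder case $x\in X_j\setminus S_j$, one combines a global generator of the ideal of $X_j$ with an analytic section controlling the open $C$-semianalytic pieces of the amenable decomposition $S_j=\bigcup_k(Y_k\cap U_k)$. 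Using the countable local finiteness of $\{S_i\}_{i\geq1}$ and an exhaustion of $M$ by compacts, the $f_{ij}$ can then be patched into a single element $g_i\in\bigcap_{j\neq i}\gtp_j\setminus\gtp_i$ by a Weierstrass-type convergent infinite product on a neighborhood of $S$. This patching will be the main obstacle, because $\an(S)$ is typically non-Noetherian and the decomposition is genuinely countable; the construction relies on the global analytic machinery set up earlier in the paper.

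Once the $g_i$ are in hand the ring-theoretic conclusions follow cleanly. Injectivity of $\Phi$ is an immediate consequence of the separators $f_{ij}\in\gtp_j\setminus\gtp_i$. The crucial algebraic identity is $\bigcap_i\gtp_i=(0)$: by condition (4) of Definition \ref{irredcomptame}, any $f\in\bigcap_i\gtp_i$ vanishes on $S=\bigcup_iS_i$ and hence is zero in $\an(S)$. For minimality of $\gtp_i$: if $\gtq\subseteq\gtp_i$ is prime and $f\in\gtp_i$, then $fg_i\in\bigcap_j\gtp_j=(0)\subseteq\gtq$, while $g_i\notin\gtp_i\supseteq\gtq$, so primality of $\gtq$ forces $f\in\gtq$ and therefore $\gtp_i=\gtq$. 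For surjectivity, let $\gtq$ be any minimal prime: if $g_i\notin\gtq$ for some $i$, the same product argument shows $\gtp_i\subseteq\gtq$, and minimality of $\gtq$ gives $\gtq=\gtp_i$. The remaining possibility that $g_i\in\gtq$ for every $i$ will be ruled out by passing to the normalization $\pi:Y\to\widetilde{X}$ of the Stein complexification of the Zariski closure of $S$, where (invoking Theorem \ref{dpm}) $\gtq$ identifies with the vanishing ideal of a union of connected components of $\pi^{-1}(S)$; at least one such component must map surjectively onto some $S_i$, producing the desired witness $g_i\notin\gtq$.
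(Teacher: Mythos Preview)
Your approach is genuinely different from the paper's, and it has two real gaps.

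\textbf{What the paper does.} In the paper, Theorem \ref{bijection} is a one-line consequence of Theorem \ref{irredcomp2}(ii) combined with Theorem \ref{irredcomp3}. The point is that the paper \emph{starts} on the algebraic side: it invokes the locally finite primary decomposition of the saturated radical ideal $(0)\subset\an(S)$ (Lemma \ref{pridecomp} and Corollary \ref{lemmaidreal}) to obtain $0=\bigcap_i\gtp_i$ with $\gtp_i$ prime and the decomposition irredundant; then it defines $S_i:=\ceros(\gtp_i)$ and checks $\gtp_i=\ideal(S_i,S)$. Irredundancy immediately furnishes the separators $g_i\in\bigcap_{j\neq i}\gtp_j\setminus\gtp_i$, and Lemma \ref{intpri} (prime avoidance for locally finite families of saturated ideals) handles surjectivity onto the set of minimal primes. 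No infinite products, no normalization.

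\textbf{First gap: the separators $g_i$.} You acknowledge this is ``the main obstacle'' but do not resolve it. Producing individual $f_{ij}\in\gtp_j\setminus\gtp_i$ is plausible (and indeed follows from $S_i=\ceros(\gtp_i)$, which is Theorem \ref{irredcomp2}(i)), but combining countably many of them into a single $g_i\in\bigcap_{j\neq i}\gtp_j\setminus\gtp_i$ by a ``Weierstrass-type convergent infinite product on a neighborhood of $S$'' is not a step one can wave through: the $f_{ij}$ are only defined on varying neighborhoods of $S$, the zero sets $\{S_j\}_{j\neq i}$ need not be pairwise disjoint, and controlling simultaneous convergence while preserving non-vanishing on $S_i$ is exactly the subtlety that the primary decomposition machinery in \S\ref{s2} was built to absorb. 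In the paper the existence of $g_i$ is free from irredundancy of the decomposition; you are effectively trying to reprove Lemma \ref{pridecomp} by hand.

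\textbf{Second gap: surjectivity when all $g_i\in\gtq$.} Your plan here does not work as stated. Theorem \ref{dpm} characterizes irreducibility of $S$ itself via the existence of a connected component of $\pi^{-1}(S)$ mapping onto $S$; it says nothing about identifying an \emph{arbitrary} minimal prime $\gtq\subset\an(S)$ with a union of connected components of $\pi^{-1}(S)$. That identification would require, at minimum, knowing that $\ceros(\gtq)$ is an amenable $C$-semianalytic set whose Zariski closure sits well inside $\widetilde{X}$, which is circular: that is essentially the content of Theorem \ref{irredcomp2}(i) and Lemma \ref{clue3}. The paper bypasses this entirely via Lemma \ref{intpri}: since $\bigcap_i\gtp_i=(0)\subset\gtq$ and $\gtq$ is saturated (Lemma \ref{basic}(ii)), some $\gtp_i\subset\gtq$, whence $\gtq=\gtp_i$ by minimality. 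If you want to avoid the saturated-ideal framework you need an independent argument that every minimal prime of $\an(S)$ has nonempty zero locus, and then a genuine prime-avoidance statement for locally finite (not finite) families; neither is supplied.
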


The behavior of the Zariski closure of an amenable $C$-semianalytic set $S$ in a small enough open neighborhood $U\subset M$ of $S$ is neat with respect to the irreducible components.

\begin{prop}\label{neatirred}
Let $S\subset M$ be an amenable $C$-semi\-analytic set. There exists an open neighborhood $U\subset M$ of $S$ such that if $X$ is the Zariski closure of $S$ in $U$ and $\{X_i\}_{i\geq1}$ are the irreducible components of $X$, then $\{S_i:=X_i\cap S\}_{i\geq1}$ is the family of the irreducible components of $S$ and $X_i$ is the Zariski closure of $S_i$ in $U$ for $i\geq1$. In particular, if $S$ is a global $C$-semianalytic subset of $M$, each $S_i$ is a global $C$-semianalytic subset of $U$.
\end{prop}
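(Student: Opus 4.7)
The plan is to obtain the irreducible components of $S$ by restricting those of its Zariski closure. Start with $X^{\circ}:=\ol{S}^{\zar}$ in $M$, a $C$-analytic set of the same dimension as $S$ by amenability, and let $\{X^{\circ}_i\}_{i\geq1}$ be its countable locally finite family of irreducible components in the sense of Whitney-Bruhat.

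A first key lemma is that $X^{\circ}_i$ equals the Zariski closure in $M$ of $X^{\circ}_i\cap S$. Suppose $Y_i:=\ol{X^{\circ}_i\cap S}^{\zar}$ were a proper $C$-analytic subset of $X^{\circ}_i$. Then $Z:=Y_i\cup\bigcup_{j\neq i}X^{\circ}_j$ is a locally finite union of $C$-analytic sets, hence $C$-analytic, and contains $S$ (each $s\in S$ lies in some $X^{\circ}_j$; if $j=i$ then $s\in Y_i$). By the irreducibility of $X^{\circ}_i$ together with the irredundancy of the family $\{X^{\circ}_j\}$ one gets $Z\subsetneq X^{\circ}$, contradicting the minimality of the Zariski closure.

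Next, choose $U$ small enough that $X:=X^{\circ}\cap U$ is the Zariski closure of $S$ in $U$ and each $X_i:=X^{\circ}_i\cap U$ remains irreducible as a $C$-analytic subset of $U$; the latter is arranged using a Stein complexification $\widetilde{X^{\circ}}$ with anti-involution $\sigma$ and its normalization $\pi:Y\to\widetilde{X^{\circ}}$, selecting an invariant complex neighborhood $\widetilde U$ of $S$ so that each $\pi^{-1}(X^{\circ}_i)\cap\widetilde U$ stays connected (possible because $\pi^{-1}(X^{\circ}_i)$ already carries the connectedness associated to the irreducibility of $X^{\circ}_i$ via Theorem \ref{dpm}). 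Set $S_i:=X_i\cap S$; this is amenable, since amenable sets are closed under finite intersections with $C$-analytic sets, and the argument of the previous paragraph adapted to $U$ gives that $X_i$ is the Zariski closure of $S_i$ in $U$. Irreducibility of $S_i$ then follows from Theorem \ref{dpm}, since the corresponding connected component of $\pi^{-1}(X_i)\cap\widetilde U$ contains a connected component of $\pi^{-1}(S_i)$ surjecting onto $S_i$. For maximality, if $S_i\subset T\subset S$ with $T$ irreducible amenable $C$-semianalytic, the Zariski closure of $T$ in $U$ is an irreducible $C$-analytic subset of $X$ containing $X_i$, hence equal to $X_i$, which forces $T=S_i$. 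The equality $S=\bigcup_{i\geq1}S_i$, the distinctness $S_i\neq S_j$ for $i\neq j$ and the local finiteness of $\{S_i\}$ are inherited from the analogous properties of $\{X_i\}$. Finally, if $S$ is global $C$-semianalytic, writing $S=\bigcup_{k=1}^NB_k$ as a finite union of basic sets gives $S_i=\bigcup_{k=1}^N(X_i\cap B_k)$, a finite union of basic $C$-semianalytic subsets of $U$, because $X_i$ is cut out by finitely many global analytic functions on $U$.

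The main obstacle I anticipate lies in the choice of $U$: one must arrange simultaneously that the Zariski closure of $S$ in $U$ equals $X^{\circ}\cap U$ and that each irreducible component survives the restriction as a $C$-analytically irreducible subset of $U$. The delicate point is the second, since irreducibility of a $C$-analytic set can fail after restriction if the complexification of $X^{\circ}_i$ splits inside $\widetilde U$; controlling this requires the complexified normalization and is where the invariant-theoretic technology developed earlier in the paper (anti-involutions, invariant sections and Theorem \ref{dpm}) plays its central role.
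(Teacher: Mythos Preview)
Your approach has a genuine gap in the choice of $U$. You start from the irreducible components $\{X^\circ_i\}$ of $\ol{S}^{\zar}_M$ and then shrink to a neighborhood $U$ so that each $X^\circ_i\cap U$ \emph{remains} irreducible. But the role of $U$ in the statement is exactly the opposite: it must be small enough to \emph{break} a single $X^\circ_i$ into several irreducible pieces whenever several irreducible components of $S$ share the same Zariski closure in $M$. That this really happens is acknowledged throughout the paper (Proposition~\ref{zclc} speaks of the family $\{\ol{S_i}^{\zar}\}_{i\geq1}$ being locally finite only ``after eliminating repetitions''). A minimal counterexample to your scheme: let $S\subset\R^n$ be the union of two disjoint open balls. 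Then $\ol{S}^{\zar}_M=\R^n$ has a single irreducible component $X^\circ_1=\R^n$, and any $U$ with $X^\circ_1\cap U$ irreducible is connected; your $S_1=X^\circ_1\cap U\cap S=S$ is disconnected, hence reducible. The two balls are the irreducible components of $S$, and the correct $U$ is a \emph{disconnected} neighborhood separating them, so that $\ol{S}^{\zar}_U$ acquires two irreducible components.

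This also shows why your irreducibility argument via Theorem~\ref{dpm} cannot go through as written: in the example above $\pi^{-1}(S_1)=\pi^{-1}(S)$ has (at least) two connected components, neither of which surjects onto $S$. More generally, knowing that $\pi^{-1}(X_i)$ has a distinguished connected component says nothing about whether some connected component of $\pi^{-1}(S_i)$ maps onto $S_i$. The paper avoids this trap by reversing the order: it first produces the irreducible components of $S$ \emph{algebraically}, as zero sets of the minimal primes of $\an(S)$ (the ``weak irreducible components'' of Theorem~\ref{irredcomp2}), and only afterwards manufactures $U$ so that the Zariski closures of these already-known pieces become the irreducible components of $\ol{S}^{\zar}_U$ (Proposition~\ref{neatwirred}). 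Theorem~\ref{irredcomp3} then identifies weak and genuine irreducible components, and Proposition~\ref{neatirred} is a restatement. Your ``first key lemma'' ($X^\circ_i=\ol{X^\circ_i\cap S}^{\zar}_M$) is correct and useful, but it is not enough: you still need an \emph{a priori} construction of the $S_i$ before you can choose $U$.
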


The family $\{\ol{S_i}^{\zar}\}_{i\geq1}$ of the Zariski closures of the irreducible components $\{S_i\}_{i\geq1}$ of an amenable $C$-semianalytic set is by Proposition \ref{zclc} locally finite in $M$ (after eliminating repetitions). Consequently,

\begin{cor}
Any union of irreducible components of an amenable $C$-semianalytic $S\subset M$ is an amenable $C$-analytic set.
\end{cor}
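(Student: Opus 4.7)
The plan is to reduce the statement directly to Corollary \ref{lfuok2}, which asserts that the union of a locally finite collection of amenable $C$-semianalytic sets whose Zariski closures form a locally finite family (after eliminating repetitions) is again an amenable $C$-semianalytic set. So the task is just to verify, for an arbitrary subfamily of irreducible components, the three hypotheses of that corollary.

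Let $\{S_i\}_{i\geq1}$ be the family of irreducible components of $S$ (existence and uniqueness provided by Theorem \ref{irredcomp3}) and fix an arbitrary index set $I\subset\N$. First, each $S_i$ is an amenable $C$-semianalytic subset of $M$ by definition, so the subfamily $\{S_i\}_{i\in I}$ consists of amenable $C$-semianalytic sets. Second, since $\{S_i\}_{i\geq1}$ is by definition a (countable) locally finite family in $M$, its subfamily $\{S_i\}_{i\in I}$ is automatically locally finite in $M$. Third, by the remark preceding the corollary, which invokes Proposition \ref{zclc}, the family $\{\ol{S_i}^{\zar}\}_{i\geq 1}$ is locally finite in $M$ after eliminating repetitions; the corresponding subfamily $\{\ol{S_i}^{\zar}\}_{i\in I}$ inherits this property.

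Having checked the three hypotheses, Corollary \ref{lfuok2} applies to the subfamily $\{S_i\}_{i\in I}$ and yields that $\bigcup_{i\in I}S_i$ is an amenable $C$-semianalytic subset of $M$, as required.

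The only conceptually non-trivial step is the local finiteness of the Zariski closures $\{\ol{S_i}^{\zar}\}_{i\geq 1}$, but this is exactly the content of Proposition \ref{zclc}, which we are entitled to invoke. Everything else is a bookkeeping argument: passing to subfamilies preserves local finiteness, and amenability of each $S_i$ is built into the definition of irreducible component. Hence no new geometric input is needed beyond the results already established.
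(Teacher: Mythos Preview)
Your proof is correct and follows exactly the approach the paper intends: the corollary is stated as an immediate ``Consequently'' of Proposition \ref{zclc}, and the implicit mechanism is precisely Corollary \ref{lfuok2}, whose hypotheses you spell out. The only minor remark is that local finiteness of $\{S_i\}_{i\geq1}$ \emph{in $M$} (rather than merely in $S$) is not literally part of Definition \ref{irredcomptame} but is guaranteed by Theorem \ref{tamend}; you might cite that instead of saying ``by definition''.
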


\subsection{Structure of the article} 
 
The article is organized as follows. In Section \ref{s2} we present all basic notions and notations used in this article as well as some preliminary results concerning the ring of analytic functions on a $C$-semianalytic set. We also include an Appendix \ref{app} including some results about locally finite families. The reading can be started directly in Section \ref{s3} and referred to the Preliminaries and the Appendix only when needed. In Section \ref{s3} we study the main properties of amenable $C$-semianalytic sets, we prove Theorem \ref{properint-tame} and we present an algorithm to determine the amenablility of a $C$-semianalytic set. The purpose of Section \ref{s4} is to understand the concept of irreducibility for amenable $C$-semianalytic sets while in Section \ref{s5} we prove the existence an uniqueness of the family of the irreducible components of an amenable $C$-semianalytic set. This requires the intermediate concept of \em weak irreducible components\em. We also study in Section \ref{s5} the relationship between the irreducible components of an amenable $C$-semianalytic set and the connected components of its inverse image under the normalization map.

%\subsection*{Acknowledgements}
%The author thanks Prof. F. Acquistapace and Prof. F. Broglia for many helpful discussions and suggestions during the preparation of this article. %The author is also very grateful to S. Schramm for a careful reading of the final version and for the suggestions to refine its redaction.

\section{Preliminaries on real and complex analytic spaces}\label{s2}

Although we deal with real analytic functions, we make an extended use of complex analysis. In the following \em holomorphic \em refer to the complex case and \em analytic \em to the real case. For a further reading about complex analytic spaces we suggest \cite{gr} while we remit the reader to \cite{gmt,t} for the theory of real analytic spaces. The elements of $\an(X):=H^0(X,\an_X)$ are denoted with capital letters if $(X,\an_X)$ is a Stein space and with small letters if $(X,\an_X)$ is a real analytic space. All concepts appearing in the Introduction that involve a real analytic manifold $M$ can be extended to a real analytic space $(X,\an_X)$ using the ring $\an(X)$ of global analytic sections on $X$ instead of the ring $\an(M)$ of global analytic functions on $M$. 

\subsection{General terminology}
Denote the coordinates in $\C^n$ with $z:=(z_1,\ldots,z_n)$ where $z_i:=x_i+\sqrt{-1}y_i$. Consider the conjugation $\ol{\,\cdot\,}:\C^n\to\C^n,\ z\mapsto\ol{z}:=(\ol{z_1},\ldots,\ol{z_n})$ of $\C^n$, whose set of fixed points is $\R^n$. A subset $A\subset\C^n$ is \em invariant \em if $\ol{A}=A$. Obviously, $A\cap\ol{A}$ is the biggest invariant subset of $A$. Let $\Omega\subset\C^n$ be an invariant open set and $F:\Omega\to\C$ a holomorphic function. We say that $F$ is \em invariant \em if $F(z)=\ol{F(\ol{z})}$ for all $z\in\Omega$. This implies that $F$ restricts to a real analytic function on $\Omega\cap\R^n$. Conversely, if $f$ is analytic on $\R^n$, it can be extended to an invariant holomorphic function $F$ on some invariant open neighborhood $\Omega$ of $\R^n$. 

\subsubsection{Real and imaginary parts}

Write the tuple $z\in\C^n$ as $z=x+\sqrt{-1}y$ where $x:=(x_1,\ldots,x_n)$ and $y:=(y_1,\ldots,y_n)$, so we identify $\C^n$ with $\R^{2n}$. If $F:\Omega\to\C$ is a holomorphic function,
$$
F(x+\sqrt{-1}y):=\Re^*(F)(x,y)+\sqrt{-1}\Im^*(F)(x,y)
$$
where 
$$
\Re^*(F)(x,y):=\frac{F(z)+\ol{F(z)}}{2}\quad\text{and}\quad\Im^*(F)(x,y):=\frac{F(z)-\ol{F(z)}}{2\sqrt{-1}}
$$
are real analytic functions on $\Omega\equiv\Omega^\R$ understood as an open subset of $\R^{2n}$. 

Assume in addition that $\Omega$ is invariant. Then
$$
\Re(F):\Omega\to\C,\ z\mapsto\tfrac{F(z)+\ol{F(\ol{z})}}{2}\quad\text{and}\quad\Im(F):\Omega\to\C,\ z\mapsto\tfrac{F(z)-\ol{F(\ol{z})}}{2\sqrt{-1}}
$$ 
are invariant holomorphic functions that satisfy $F=\Re(F)+\sqrt{-1}\,\Im(F)$. We have
$$
\Re^*(F)=\Re^*(\Re(F))-\Im^*(\Im(F))\quad\text{and}\quad \Im^*(F)=\Im^*(\Re(F))+\Re^*(\Im(F)),
$$
so it is convenient not to confuse the pair of real analytic functions $(\Re^*(F),\Im^*(F))$ on $\Omega^\R$ with the pair of invariant holomorphic functions $(\Re(F),\Im(F))$ on $\Omega$.

\subsection{Reduced analytic spaces \cite[I.1]{gmt}}\label{ras}
Let $\K=\R$ or $\C$ and let $(X,\an_X)$ be an either complex or real analytic space. Let ${\mathcal F}_X$ be the sheaf of $\K$-valued functions on $X$ and let $\vartheta:\an_X\to{\mathcal F}_X$ be the morphism of sheaves defined for each open set $U\subset X$ by $\vartheta_U(s):U\to\K,\ x\mapsto s(x)$ where $s(x)$ is the class of $s$ module the maximal ideal $\gtm_{X,x}$ of $\an_{X,x}$. Recall that $(X,\an_X)$ is \em reduced \em if $\vartheta$ is injective. Denote the image of $\an_X$ under $\vartheta$ with $\an_X^r$ . The pair $(X,\an_X^r)$ is called the \em reduction \em of $(X,\an_X)$ and $(X,\an_X)$ is reduced if and only if $\an_X=\an_X^r$. The reduction is a covariant functor from the category of $\K$-analytic spaces to that of reduced $\K$-analytic spaces.

\subsection{Underlying real analytic space \cite[II.4]{gmt}}\label{uras}
Let $(Z,\an_Z)$ be a local model for complex analytic spaces defined by a coherent sheaf of ideals ${\mathcal I}\subset\an_{\C^n}|_{\Omega}$, that is, $Z:=\supp(\an_{\C^n}|_{\Omega}/{\mathcal I})$ and $\an_Z:=(\an_{\C^n}|_{\Omega}/{\mathcal I})|_Z$. Suppose that ${\mathcal I}$ is generated by finitely many holomorphic functions $F_1,\ldots,F_r$ on $\Omega$. Let ${\mathcal I}^\R$ be the coherent sheaf of ideals of $\an_{\R^{2n}}|_{\Omega^\R}$ generated by $\Re^*(F_i),\Im^*(F_i)$ for $i=1,\ldots,r$. Let $(Z^\R,\an_Z^\R)$ be the local model for a real analytic space defined by the coherent sheaf of ideals ${\mathcal I}^\R$. For every complex analytic space $(X,\an_X)$ there exists a structure of real analytic space on $X$ that we denote with $(X^\R,\an_X^\R)$ and it is called the \em underlying real analytic space of $(X,\an_X)$\em. The previous construction provides a covariant functor from the category of complex analytic spaces to that of real analytic spaces.

\begin{remark}
If $(X,\an_X)$ is a reduced complex analytic space, it may fail that $(X^\R,\an^\R_X)$ is coherent or reduced \cite[III.2.15]{gmt}. 
\end{remark}

\subsection{Anti-involution and complexifications}
Let $(X,\an_X)$ be a complex analytic space and let $(X^\R,\an_X^\R)$ be its underlying real analytic space. We regard both sheaves $\an_X$ of holomorphic sections and $\ol{\an}_X$ of antiholomorphic sections on $X$ as subsheaves of the sheaf $\an_X^\R\otimes\C$ (see \cite[III.4.5]{gmt}). An \em anti-involution \em on $(X,\an_X)$ is a morphism of $\R$-ringed spaces $\sigma:(X^\R,\an_X^\R\otimes\C)\to(X^\R,\an_X^\R\otimes\C)$ such that $\sigma^2=\id$ and it interchanges the subsheaf of holomorphic sections $\an_X$ with the subsheaf of antiholomorphic sections $\ol{\an}_X$. 

\subsubsection{Fixed part space}\label{fixed}
Let $(X,\an_X)$ be a complex analytic space endowed with an anti-involu\-tion $\sigma$. Let $X^\sigma:=\{x\in X:\ \sigma(x)=x\}$ and define a sheaf $\an_{X^\sigma}$ on $X^\sigma$ in the following way: for each open subset $U\subset X^\sigma$, we set $H^0(U,\an_{X^\sigma})$ as the subset of $H^0(U,\an_X|_{X^\sigma})$ of invariant sections. The $\R$-ringed space $(X^\sigma,\an_{X^\sigma})$ is called the \em fixed part space of $(X,\an_X)$ with respect to $\sigma$\em. By \cite[II.4.10]{gmt} it holds that $(X^\sigma,\an_{X^\sigma})$ is a real analytic space if $X^\sigma\neq\varnothing$ and it is a closed subspace of $(X^\R,\an_X^\R)$. By Cartan's Theorem B the natural homomorphism $H^0(X^\R,\an_X^\R)\to H^0(X^\sigma,\an_{X^\sigma})$ is surjective \cite[III.3.8]{gmt}. 

\subsubsection{Complexification and $C$-analytic spaces \cite[III.3]{gmt}}\label{complexification} 
A real analytic space $(X,\an_X)$ is a \em $C$-analytic space \em if it satisfies one of the following two equivalent conditions:
\begin{itemize}
\item[(1)] Each local model of $(X,\an_X)$ is defined by a coherent sheaf of ideals, which is not necessarily associated to a \em well reduced structure \em (see \ref{cas}).
\item[(2)] There exist a complex analytic space $(\widetilde{X},\an_{\widetilde{X}})$ endowed with an anti-holomorphic involution $\sigma$ whose fixed part space is $(X,\an_X)$.
\end{itemize}
The complex analytic space $(\widetilde{X},\an_{\widetilde{X}})$ is called a \em complexification \em of $X$ and it satisfies the following properties:
\begin{itemize}
\item[(i)] $\an_{\widetilde{X},x}=\an_{X,x}\otimes\C$ for all $x\in X$.
\item[(ii)] The germ of $(\widetilde{X},\an_{\widetilde{X}})$ at $X$ is unique up to an isomorphism.
\item[(iii)] $X$ has a fundamental system of invariant open Stein neighborhoods in $\widetilde{X}$. 
\item[(iv)] If $X$ is reduced, then $\widetilde{X}$ is also reduced.
\end{itemize} 
For further details see \cite{c,gmt,t,wb}. Observe that if $(X,\an_X)$ is a complex analytic space, $(X^\R,\an_X^\R)$ satisfies by definition condition (1) above, so it is a $C$-analytic space and has a well-defined complexification. 

\subsection{$C$-analytic sets}\label{cas} 
The concept of $C$-analytic sets was introduced by Cartan in \cite[\S7,\S10]{c}. Recall that a subset $X\subset M$ is \em $C$-analytic \em if there exists a finite set $\Ss:=\{f_1,\ldots,f_r\}$ of real analytic functions $f_i$ on $M$ such that $X$ is the common zero-set of $\Ss$. This property is equivalent to the following:
\begin{itemize}
\item[(1)] There exists a coherent sheaf of ideals $\Jhaz$ on $M$ such that $X$ is the zero set of $\Jhaz$.
\item[(2)] There exist an open neighborhood $\Omega$ of $M$ in a complexification $\widetilde{M}$ of $M$ and a complex analytic subset $Z$ of $\Omega$ such that $Z\cap M=X$.
\end{itemize}
A coherent analytic set is $C$-analytic. The converse is not true in general, consider for example Whitney's umbrella. 

\subsubsection{Well-reduced structure}\label{wrs}
Given a $C$-analytic set $X\subset M$ the largest coherent sheaf of ideals $\Jhaz$ having $X$ as its zero set is $\ideal(X)\an_M$ by Cartan's Theorem $A$, where $\ideal(X)$ is the set of all analytic functions on $M$ that are identically zero on $X$. The coherent sheaf $\an_X:=\an_M/\ideal(X)\an_M$ is called the \em well reduced structure of $X$\em. The $C$-analytic set $X$ endowed with its well reduced structure is a real analytic space, so it has a well-defined complexification as commented above. 

\subsubsection{Singular set of a $C$-analytic set}\label{sing}

Let $X\subset M$ be a $C$-analytic set and let $\widetilde{X}$ be a complexification of $X$. We define the \em singular locus of $X$ \em as $\Sing(X):=\Sing(\widetilde{X})\cap M$. Its complement $\Reg(X):=X\setminus\Sing(X)$ is the set of \em regular points of $X$\em. Observe that $\Sing(X)$ is a $C$-analytic set of strictly smaller dimension than $X$. We define inductively $\Sing_\ell(X)=\Sing(\Sing_{\ell-1}(X))$ for $\ell\geq1$ where $\Sing_1(X)=\Sing(X)$. In particular, $\Sing_\ell(X)=\varnothing$ if $\ell\geq\dim(X)+1$. We will write $X:=\Sing_0(X)$ for simplicity.

\subsubsection{Irreducible components of a $C$-analytic set}
A $C$-analytic set is \em irreducible \em if it is not the union of two $C$-analytic sets different from itself. In addition $X$ is irreducible if and only if it admits a fundamental system of invariant irreducible complexifications. Given a $C$-analytic set $X$, there is a unique irredundant (countable) locally finite family of irreducible $C$-analytic sets $\{X_i\}_{i\geq1}$ such that $X=\bigcup_{i\geq1}X_i$. The $C$-analytic sets $X_i$ are called the \em irreducible components of $X$\em. For further details see \cite{wb}.

\subsection{Normalization of complex analytic spaces}\label{norm}
One defines the normalization of a complex analytic space in the following way \cite[VI.2]{n}. A complex analytic space $(X,\an_X)$ is \em normal \em if for all $x\in X$ the local analytic ring $\an_{X,x}$ is reduced and integrally closed. A \em normalization \em $(Y,\pi)$ of a complex analytic space $(X,\an_X)$ is a normal complex analytic space $(Y,\an_Y)$ together with a proper surjective holomorphic map $\pi:Y\to X$ with finite fibers such that $Y\setminus\pi^{-1}(\Sing(X))$ is dense in $Y$ and $\pi|:Y\setminus\pi^{-1}(\Sing(X))\to X\setminus\Sing(X)$ is an analytic isomorphism. The normalization $(Y,\pi)$ of a reduced complex analytic space $X$ always exists and is unique up to isomorphism \cite[VI.2.Lem.2 \& VI.3.Thm.4]{n}. 

\begin{remark}\label{norm1}
If $\Omega$ is an open subset of $X$, then $(\pi^{-1}(\Omega),\pi|)$ is the normalization of $\Omega$. If $Z$ is a connected component of $\pi^{-1}(\Omega)$, the map $\pi|_Z:Z\to \Omega$ is proper and $\pi(Z)$ is by Remmert's Theorem \cite[VII.\S2.Thm.2]{n} a complex analytic subspace of $\Omega$. In addition $Z$ is irreducible because it is connected and normal. Thus, $Z\setminus(\pi|_Z)^{-1}(\Sing(\pi(Z)))$ is by \cite[IV.1.Cor.2]{n} connected. Consequently,
$$
\pi(Z\setminus(\pi|_Z)^{-1}(\Sing(\pi(Z))))=\pi(Z)\setminus\Sing(\pi(Z))=\Reg(\pi(Z))
$$
is connected and $\pi(Z)$ is irreducible by \cite[IV.1.Cor.1]{n}. Thus, $\pi(Z)$ is an irreducible component of $Z$ and $(Z,\pi|_Z)$ is the normalization of $\pi(Z)$.
\end{remark}

\subsection{Analytic functions on a semianalytic set}

Let $T\subset M$ be a subset and let $U\subset M$ be an open neighborhood of $T$. Denote the collection of all analytic functions on $U$ that vanish identically on $T$ with $\ideal(T,U):=\{f\in\an(U):\ f|_T\equiv0\}$. The \em Zariski closure $\ol{T}^{\zar}_U$ of $T$ in $U$ \em is the intersection
$$
\ol{T}^{\zar}_U:=\bigcap_{f\in\ideal(T,U)}\ceros(f)
$$ 
of the zerosets $\ceros(f):=\{x\in U:\ f(x)=0\}$ where $f\in\ideal(T,U)$. 

\subsubsection{Sheaf of analytic germs on a semianalytic set}
Fix a semianalytic set $S\subset M$ and an open neighborhood $U\subset M$ of $S$. Consider the coherent sheaf of ideals $\J_U:=\ideal(S,U)\an_U$. As the sequence
\begin{equation}\label{exactseq}
0\to\J_U\to\an_U\to\an_U/\J_U\to 0
\end{equation}
is exact, the sheaf of rings $\an_U/\J_U$ is coherent. Observe that $H^0(U,\J_U)=\ideal(S,U)$. The exact sequence \eqref{exactseq} induces by Cartan's Theorem B an exact sequence on global sections
$$
0\to\ideal(S,U)\to\an(U)\to H^0(U,\an_U/\J_U)\to 0.
$$
Consequently $H^0(U,\an_U/\J_U)\cong\an(U)/\ideal(S,U)$.

The sheaf $\an_M|_S$ is by \cite[Cor. (I,8)]{f} coherent. Observe that $H^0(S,\an_M|_S)$ is the ring of germs at $S$ of analytic functions on $M$ and that $\an_M|_S=\an_U|_S$ for each open neighborhood $U\subset M$ of $S$. Consider the ideal
$$
\ideal(S):=\{f\in H^0(S,\an_M|_S):\ f|_S=0\}.
$$
Define the sheaf of ideals $\J_S:=\ideal(S)\an_M|_S$, which is by \cite[Cor.(I,8)]{f} a coherent sheaf of ideals of $\an_M|_S$. Consider the exact sequence
\begin{equation}\label{exactseq2}
0\to \J_S\to\an_M|_S\to\an_S:=\an_M|_S/\J_S\to 0.
\end{equation}
As $\J_S$ and $\an_M|_S$ are coherent, the quotient sheaf $\an_S$ is also coherent. In addition $H^0(S,\J_S)=\ideal(S)$. The exact sequence \eqref{exactseq2} induces by Cartan's Theorem B an exact sequence on global sections:
$$
0\to\ideal(S)\to H^0(S,\an_M|_S)\to H^0(S,\an_S)\to 0.
$$

\subsubsection{Ring of analytic functions on a semianalytic set}
The ring 
\begin{equation}\label{os}
\an(S):=H^0(S,\an_S)\cong H^0(S,\an_M|_S)/\ideal(S)
\end{equation}
is constituted by those functions $f:S\to\R$ that admit analytic extensions ${f'}:W\to\R$ to an open neighborhood $W\subset M$ of $S$. If $U\subset M$ is an open neighborhood of $S$, the restriction homomorphism
$$
\psi_U:\an(\ol{S}^{\zar}_U)\cong\an(U)/\ideal(S,U)\to\an(S),\ f\mapsto f|_S
$$
is injective. If $U\subset V$ are open neighborhoods of $S$ in $M$, the restriction homomorphism
$$
\rho_{UV}:\an(\ol{S}^{\zar}_V)\cong\an(V)/\ideal(S,V)\to\an(\ol{S}^{\zar}_U)\cong\an(U)/\ideal(S,U),\ f\mapsto f|_U
$$
is injective. It holds: \em
\begin{equation}\label{osdl}
\an(S)\cong\displaystyle\lim_{\substack{\longrightarrow\\S\subset U\subset M}}\an(U)/\ideal(S,U)=\displaystyle\lim_{\substack{\longrightarrow\\S\subset U\subset M}}\an(\ol{S}^{\zar}_U)
\end{equation}
where $U$ runs over the open neighborhoods of $S$ in $M$\em. 

\subsection{Some algebraic properties of saturated ideals}
An ideal $\gta$ of $\an(S)$ is \em saturated \em if it coincides with its \em saturation\em
$$
\widetilde{\gta}:=\{f\in\an(S):\ f_x\in\gta\an_{S,x}\ \forall x\in S\}=H^0(S,\gta\an_S)
$$
Some basic immediate properties are the following:
\begin{itemize}
\item[(i)] $\gta\subset\widetilde{\gta}$.
\item[(ii)] If $\gta\subset\gtb$, then $\widetilde{\gta}\subset\widetilde{\gtb}$.
\item[(iii)] $\widetilde{\widetilde{\gta}}=\widetilde{\gta}$.
\end{itemize}

We state next some results concerning primary ideals and primary decomposition of saturated ideals of the ring $\an(S)$. The proofs are by equality \eqref{osdl} similar to the corresponding ones for the ring $\an(X)$ where $X$ is a $C$-analytic set. Thus, we only refer next to the corresponding result for $\an(X)$ and we do not provide further details. Recall that a family of ideals $\{\gta_i\}_{i\in I}$ of $\an(S)$ is \em locally finite \em if the family of their zerosets $\{\ceros(\gta_i)\}_{i\in I}$ is locally finite in $S$.

\begin{lem}\label{basic}
Let $\gtq\subset\an(S)$ be a primary ideal.
\begin{itemize}
\item[(i)] Suppose $\ceros(\gtq)\neq\varnothing$. Then $f\in\an(S)$ belongs to $\gtq$ if and only if
the germ $f_x$ belongs to the stalk $\gtq\an_{S,x}$ for some $x\in\ceros(\gtq)$.
\item[(ii)] The primary ideal $\gtq$ is saturated if and only if $\ceros(\gtq)\neq\varnothing$.
\end{itemize}
\end{lem}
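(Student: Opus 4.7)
The plan is to reduce both assertions to the analogous statement for the ring $\an(X)$ of a $C$-analytic set by means of the direct-limit description \eqref{osdl}: $\an(S)=\bigcup_U\an(\ol{S}^{\zar}_U)$, where $U$ runs over the open neighborhoods of $S$ in $M$ and the transition maps are inclusions.

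For (i) the direction $f\in\gtq\Rightarrow f_x\in\gtq\an_{S,x}$ is immediate from the definition of the stalk. For the converse, pick a representation $f_x=\sum_{i=1}^r\alpha_i(q_i)_x$ with $q_1,\ldots,q_r\in\gtq$ and $\alpha_i\in\an_{S,x}$, and shrink $U$ so that $f$ and all the $q_i$ already lie in $\an(X_U)$, where $X_U:=\ol{S}^{\zar}_U$. The contracted ideal $\gtq_U:=\gtq\cap\an(X_U)$ is a proper primary ideal of $\an(X_U)$, because contractions of primary ideals under ring homomorphisms are primary (the only alternative, containing $1$, is ruled out by properness of $\gtq$). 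Under the natural identification $\an_{S,x}=\an_{X_U,x}$ (both coincide with $\an_{M,x}/\ideal(S)_x\an_{M,x}$, since $\ideal(S)_x=\ideal(S,U)_x$), the same relation gives $f_x\in\gtq_U\an_{X_U,x}$; moreover $x\in\ceros(\gtq)\subseteq\ceros(\gtq_U)$, so this zero-set is nonempty. Applying the corresponding statement for the $C$-analytic set $X_U$ yields $f\in\gtq_U\subseteq\gtq$.

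Part (ii) then follows formally from (i). If $\ceros(\gtq)\neq\varnothing$ and $f\in\widetilde{\gtq}$, then $f_y\in\gtq\an_{S,y}$ for every $y\in S$ and in particular for some $y\in\ceros(\gtq)$, so (i) gives $f\in\gtq$; combined with the trivial inclusion $\gtq\subseteq\widetilde{\gtq}$ we obtain $\gtq=\widetilde{\gtq}$. Conversely, if $\ceros(\gtq)=\varnothing$, then at each $x\in S$ some element of $\gtq$ does not vanish at $x$ and is therefore a unit in the local ring $\an_{S,x}$, forcing $\gtq\an_{S,x}=\an_{S,x}$; hence $1\in H^0(S,\gtq\an_S)=\widetilde{\gtq}$, so $\widetilde{\gtq}=\an(S)\neq\gtq$ (primary ideals are proper) and $\gtq$ fails to be saturated.

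The only delicate point is the bridge from $\an(S)$ to $\an(X_U)$: one must carefully identify the stalks $\an_{S,x}=\an_{X_U,x}$, and then verify that passing from $\gtq$ to the contracted ideal $\gtq_U$ preserves both primality and the non-emptiness of the zero locus, so that the whole assertion becomes a direct translation of the $C$-analytic lemma along the direct limit \eqref{osdl}.
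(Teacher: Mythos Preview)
Your proof is correct and follows exactly the approach the paper indicates: transport the question along the direct limit \eqref{osdl} to some $\an(X_U)$ and invoke the $C$-analytic version \cite[2.1]{bp}. The stalk identification $\an_{S,x}=\an_{X_U,x}$ that you rightly single out as the only delicate point holds once $U$ is shrunk so that the finitely many generators of the Noetherian ideal $(\J_S)_x=\ideal(S)\an_{M,x}$ already arise from $\ideal(S,U)$; with that further shrinking your contraction argument goes through verbatim.
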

\begin{proof}
The proof is analogous to that of \cite[2.1]{bp}.
\end{proof}

\begin{lem}\label{intpri}
Let $\{\gta_i\}_{i\in I}$ be a non empty locally finite collection of saturated ideals of $\an(S)$. Let $\gtp$ be a saturated prime ideal of $\an(S)$ such that $\bigcap_{i\in I}\gta_i\subset\gtp$. Then there exists an index $i\in I$ such that $\gta_i\subset\gtp$. 
\end{lem}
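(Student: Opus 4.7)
The plan is to argue by contradiction, using local finiteness of $\{\ceros(\gta_i)\}_{i\in I}$ to collapse the infinite intersection $\bigcap_i\gta_i$ to a finite one at a well-chosen stalk, and then to invoke Cartan's Theorem $B$ to transport a local obstruction to the global level.

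First, by Lemma \ref{basic}(ii) the set $\ceros(\gtp)$ is non-empty, so one fixes $x\in\ceros(\gtp)$. Local finiteness of the zero-sets makes $I_x:=\{i\in I:x\in\ceros(\gta_i)\}$ finite, while for every $i\notin I_x$ one has $\gta_i\an_{S,x}=\an_{S,x}$, so only the ideals indexed by $I_x$ contribute nontrivially at the stalk at $x$. Assuming for contradiction that $\gta_i\not\subset\gtp$ for every $i\in I$, I pick $g_i\in\gta_i\setminus\gtp$ for the finitely many indices $i\in I_x$ and form the finite product $g:=\prod_{i\in I_x}g_i$. The primality of $\gtp$ gives $g\notin\gtp$, while $g\in\prod_{i\in I_x}\gta_i\subset\bigcap_{i\in I_x}\gta_i$ gives $g_x\in\bigcap_{i\in I_x}\gta_i\an_{S,x}$.

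The remaining step is to promote $g_x$ to a global section of $\bigcap_i\gta_i$. The natural candidate sheaf is $\Jhaz:=\bigcap_{i\in I}\gta_i\an_S$, which by local finiteness is, near any point, a finite intersection of coherent subsheaves of $\an_S$, hence coherent, with stalk $\Jhaz_x=\bigcap_{i\in I_x}\gta_i\an_{S,x}$ and
\[
H^0(S,\Jhaz)=\bigcap_{i\in I}H^0(S,\gta_i\an_S)=\bigcap_{i\in I}\widetilde{\gta_i}=\bigcap_{i\in I}\gta_i
\]
by saturation of each $\gta_i$. Cartan's Theorem $B$ then produces $h\in\bigcap_i\gta_i$ with $h_x=g_x$. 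The hypothesis $\bigcap_i\gta_i\subset\gtp$ forces $h_x\in\gtp\an_{S,x}$, whereas $g\notin\gtp$ together with $x\in\ceros(\gtp)$ forces $g_x\notin\gtp\an_{S,x}$ via the contrapositive of Lemma \ref{basic}(i); the equality $h_x=g_x$ is the sought contradiction.

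The delicate point I expect is the sheaf-theoretic reduction, namely the coherence of $\Jhaz$ and the identification $\Jhaz_x=\bigcap_{i\in I_x}\gta_i\an_{S,x}$. Both rely crucially on local finiteness combined with the triviality $\gta_j\an_{S,y}=\an_{S,y}$ at points $y\notin\ceros(\gta_j)$. Once these reductions are in place, the rest of the argument mirrors the familiar proof of the analogous statement for $\an(X)$ on a $C$-analytic set.
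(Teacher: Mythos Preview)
Your overall strategy is the natural one and matches what the reference \cite[2.2.10]{db} does: pick $x\in\ceros(\gtp)$ (using Lemma~\ref{basic}(ii)), reduce the intersection to the finite subfamily $I_x$ at the stalk, build $g=\prod_{i\in I_x}g_i\notin\gtp$ with $g\in\bigcap_{i\in I_x}\gta_i$, and derive a contradiction via Lemma~\ref{basic}(i).

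There is, however, a genuine gap in the step ``Cartan's Theorem~B then produces $h\in\bigcap_i\gta_i$ with $h_x=g_x$''. Neither Theorem~A nor Theorem~B yields that the map $H^0(S,\Jhaz)\to\Jhaz_x$ is surjective; already for $\Jhaz=\an_S$ on $S=M=\R$ the germ at $0$ of $1/(1-x)$ is not the germ of any global analytic function, by the identity principle. So you cannot in general find a global $h$ with $h_x=g_x$.

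The fix is short and does not require any auxiliary $h$. The sheaf $\Jhaz=\bigcap_{i\in I}\gta_i\an_S$ is coherent (locally a finite intersection, as you noted), and by Cartan's Theorem~A its stalk is generated by global sections:
\[
\Jhaz_x \;=\; H^0(S,\Jhaz)\cdot\an_{S,x} \;=\; \Big(\bigcap_{i\in I}\gta_i\Big)\an_{S,x}.
\]
Since $\bigcap_{i\in I}\gta_i\subset\gtp$, this gives $\Jhaz_x\subset\gtp\,\an_{S,x}$. On the other hand your computation shows $g_x\in\bigcap_{i\in I_x}\gta_i\an_{S,x}=\Jhaz_x$, hence $g_x\in\gtp\,\an_{S,x}$. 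Now apply Lemma~\ref{basic}(i) directly (not its contrapositive): since $x\in\ceros(\gtp)$ and $g_x\in\gtp\,\an_{S,x}$, one concludes $g\in\gtp$, contradicting $g\notin\gtp$. With this correction your argument is complete and coincides with the intended proof.
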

\begin{proof}
The proof is similar to the one of \cite[2.2.10]{db}.
\end{proof}

\begin{defns}
A locally finite decomposition $\gta=\bigcap_{i\in I}\gtq_i$ where $\{\gtq_i\}_{i\in I}$ is a locally finite family of saturated primary ideals of $\an(S)$ is said a \em primary decomposition \em if it is irredundant and the associated primes $\gtp_i:=\sqrt{\gtq_i}$ are pairwise different.

A primary ideal $\gtq_k\in\{\gtq_i\}_{i\in I}$ is called an \em isolated primary component \em if $\gtp_k:=\sqrt{\gtq_k}$ is minimal among the prime ideals $\{\gtp_i:=\sqrt{\gtq_i}\}_{i\in I}$. Otherwise, it is called an
\em immersed primary component.
\end{defns}

\begin{lem}\label{pridecomp}
Let $\gta$ be a saturated ideal of $\an(S)$. Then $\gta$ admits a locally finite primary decomposition.
\end{lem}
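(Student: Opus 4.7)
The plan is to transplant to the ring $\an(S)$ the classical Lasker--Noether argument used to decompose saturated ideals of $\an(X)$ when $X$ is a $C$-analytic set (see \cite[\S 2]{db}), relying on the two previous lemmas, on the identification \eqref{osdl} and on the fact that $\an_S$ is coherent by \eqref{exactseq2}. The starting point is that the hypothesis $\gta=\widetilde{\gta}$ means precisely $\gta=H^0(S,\gta\an_S)$, so the whole question can be rephrased in terms of the coherent sheaf of ideals $\gta\an_S$.

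First I would work stalk by stalk. At each $x\in\ceros(\gta)$ the local ring $\an_{S,x}$ is Noetherian, so the stalk $\gta_x:=\gta\an_{S,x}$ admits a finite classical primary decomposition $\gta_x=\bigcap_{k=1}^{r_x}\gtq_{x,k}$ with pairwise distinct associated primes $\gtp_{x,k}=\sqrt{\gtq_{x,k}}$. Each $\gtp_{x,k}$ cuts out an irreducible analytic germ $Z_{x,k}$ at $x$; Lasker--Noether uniqueness provides canonical germs for the isolated primes, while the embedded ones become canonical once one stratifies by the iterated singular loci $\Sing_\ell$ of the minimal strata, as recalled in \ref{sing}.

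The main step is globalisation. Since $\gta\an_S$ is coherent, the zero set $Z:=\ceros(\gta\an_S)$ is cut out, on a suitable open neighbourhood of $S$ in $M$, by a coherent sheaf of ideals and hence inherits a locally finite family $\{Z_i\}_{i\in I}$ of irreducible subsets (one for every global isolated prime, enlarged by the strata needed to account for the embedded ones). For each index $i$ I would then set
\[
\gtq_i:=H^0(S,{\mathcal Q}_i),
\]
where ${\mathcal Q}_i\subset\an_S$ is the subsheaf whose stalk at each $x$ is the intersection of those primary components $\gtq_{x,k}$ of $\gta_x$ whose associated prime defines the germ of $Z_i$ at $x$, and which equals $\an_{S,x}$ off the corresponding stratum. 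Coherence of ${\mathcal Q}_i$ and its primary character at every stalk would follow, as in the $C$-analytic case, from the local uniqueness of associated primes together with the coherence of $\gta\an_S$.

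Finally I would verify that $\{\gtq_i\}_{i\in I}$ is the desired primary decomposition. Each $\gtq_i$ is saturated with $\ceros(\gtq_i)\supset Z_i\neq\varnothing$, and is therefore primary by Lemma \ref{basic}(ii) combined with the stalkwise characterisation of Lemma \ref{basic}(i); the radicals $\gtp_i:=\sqrt{\gtq_i}$ are pairwise distinct by construction of the $Z_i$; the family $\{\ceros(\gtq_i)\}_{i\in I}$ inherits local finiteness from $\{Z_i\}_{i\in I}$; and the equality $\gta=\bigcap_{i\in I}\gtq_i$ is checked stalkwise, where both sides reproduce the Lasker--Noether decomposition of $\gta_x$, the saturation hypothesis promoting the local equality to a global one. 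Irredundancy is finally enforced by discarding the superfluous components with the help of Lemma \ref{intpri}. The hard part will be the globalisation step, that is, proving coherence of the sheaves ${\mathcal Q}_i$ and making a canonical choice for the embedded components; this is precisely the delicate technical issue already settled in the $C$-analytic setting, which is what lets the final proof proceed ``by analogy'' with \cite[2.2.10]{db}.
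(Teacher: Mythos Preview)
Your proposal is correct and follows essentially the same route as the paper, which simply states that the proof is conducted analogously to \cite[2.3.6]{db}; your sketch spells out precisely that transplanted Lasker--Noether argument via coherence of $\gta\an_S$ and the identification \eqref{osdl}. One small slip: in your final sentence you cite \cite[2.2.10]{db}, which is the reference for Lemma~\ref{intpri}; the relevant result for the primary decomposition itself is \cite[2.3.6]{db}.
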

\begin{proof}
The proof is conducted similar to the one of \cite[2.3.6]{db}.
\end{proof}

\begin{lem}
Let $\gta=\bigcap_{i\in I}\gtq_i$ be a primary decomposition for a saturated ideal $\gta$ of $\an(S)$. Then the prime ideals $\gtp_i:=\sqrt{\gtq_i}$ and the isolated primary components $\gtq_i$ are uniquely determined by $\gta$, that is, they do not depend on the primary decomposition.
\end{lem}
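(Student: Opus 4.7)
The plan is to reduce the global statement to the classical Lasker--Noether uniqueness theorem applied stalkwise, since although the ring $\an(S)$ itself is not Noetherian, each stalk $\an_{S,x}$ is Noetherian (it is a quotient of the Noetherian local ring $\an_{M,x}$). The glue between the stalkwise statements and the global one is the saturation hypothesis: a saturated ideal is the ring of global sections of the sheaf it generates, so it is completely determined by its family of stalks.

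The first step is localization. Fix a point $x\in S$. By local finiteness of the primary decomposition $\gta=\bigcap_{i\in I}\gtq_i$, only finitely many zero sets $\ceros(\gtq_i)$ contain $x$; let $I_x\subset I$ be the corresponding finite index set. For $i\notin I_x$ the stalk $\gtq_i\an_{S,x}$ is the whole local ring, while for $i\in I_x$ each $\gtq_i\an_{S,x}$ is still a primary ideal with associated prime $\gtp_i\an_{S,x}$. Thus one obtains a classical finite primary decomposition
$$
\gta\an_{S,x}=\bigcap_{i\in I_x}\gtq_i\an_{S,x}
$$
in the Noetherian local ring $\an_{S,x}$, and analogously for any second primary decomposition $\gta=\bigcap_{j\in J}\gtq'_j$.

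The second step is uniqueness of associated primes. Given two primary decompositions of $\gta$, fix $\gtp_i=\sqrt{\gtq_i}$. By Lemma \ref{basic}(ii) $\ceros(\gtp_i)\neq\varnothing$ (since $\gtq_i$ is a saturated proper primary ideal), and from $\bigcap_{j\in J}\gtq'_j=\gta\subset\gtq_i\subset\gtp_i$ Lemma \ref{intpri} yields some $j$ with $\gtp'_j\subset\gtp_i$. Applying the symmetric argument to $\gtp'_j$ and then refining by passing to stalks at a point of $\ceros(\gtp_i)$, one uses the classical uniqueness of associated primes in the Noetherian local ring $\an_{S,x}$ to conclude $\gtp_i\an_{S,x}=\gtp'_j\an_{S,x}$ at every $x\in S$, hence $\gtp_i$ and $\gtp'_j$ generate the same coherent sheaf $\gtp_i\an_S=\gtp'_j\an_S$. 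Taking global sections and invoking that both ideals are saturated gives $\gtp_i=\gtp'_j$. Symmetry completes the equality of families $\{\gtp_i\}_{i\in I}=\{\gtp'_j\}_{j\in J}$.

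The third step handles the isolated primary components. Suppose $\gtp_i$ is minimal among $\{\gtp_k\}_{k\in I}$. By local finiteness and minimality, one may choose $x\in\ceros(\gtp_i)$ lying in no $\ceros(\gtp_k)$ with $\gtp_k\not\subset\gtp_i$; at such a point $\gtp_i\an_{S,x}$ is an isolated associated prime of $\gta\an_{S,x}$ in the Noetherian local ring. By classical Lasker--Noether, the corresponding primary component $\gtq_i\an_{S,x}$ of $\gta\an_{S,x}$ is uniquely determined, and therefore the same holds for $\gtq'_{j}\an_{S,x}$ with $\gtp'_j=\gtp_i$. Varying $x$ over $\ceros(\gtp_i)$ and using again that $\gtq_i$ is saturated (Lemma \ref{basic}(ii)) to recover it from the stalks of the sheaf $\gtq_i\an_S$, we obtain $\gtq_i=\gtq'_j$ globally.

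The principal obstacle is the transfer from the stalkwise Noetherian uniqueness to global equality of saturated ideals; this is where Lemma \ref{intpri} (to match global primes across the two decompositions) and the sheaf-theoretic characterization of saturation (to promote pointwise stalk equality to equality of global sections) must be combined carefully. Once these tools are in place, the argument closely mirrors the $C$-analytic case \cite[2.3.6]{db}, as the reference in the statement suggests.
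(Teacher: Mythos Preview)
The paper gives no argument here; it simply refers to \cite[2.8]{bp} (note: not \cite[2.3.6]{db}, which is the reference for the \emph{existence} Lemma~\ref{pridecomp}). Your strategy---reduce to the Noetherian stalks $\an_{S,x}$ and invoke classical Lasker--Noether uniqueness there---is indeed the method of that reference, so the plan is correct. But two steps in your execution do not close.

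In Step~2 the detour through Lemma~\ref{intpri} only yields $\gtp'_j\subset\gtp_i$; the ``symmetric argument'' produces some $\gtp_k\subset\gtp'_j\subset\gtp_i$, not equality. You then claim $\gtp_i\an_{S,x}=\gtp'_j\an_{S,x}$ at \emph{every} $x\in S$ and appeal to saturation, but nothing pins down the same index $j$ at all points. The right bridge is Lemma~\ref{basic}(i), which is much sharper than mere saturation: a primary ideal with non\-empty zero set is recovered from its stalk at a \emph{single} point of that zero set. So one should (a)~check that for $x\in\ceros(\gtp_i)$ the localized component $\gtq_i\an_{S,x}$ is not redundant---pick $g\in\bigcap_{k\neq i}\gtq_k\setminus\gtq_i$ and use Lemma~\ref{basic}(i) to see $g_x\notin\gtq_i\an_{S,x}$---so that $\gtp_i\an_{S,x}$ is an associated prime of $\gta\an_{S,x}$; (b)~match it via Noetherian uniqueness with some $\gtp'_j\an_{S,x}$ (with $j\in J_x$, hence $x\in\ceros(\gtp'_j)$); and (c)~conclude $\gtp_i=\gtp'_j$ by Lemma~\ref{basic}(i) applied to both primes at that one point.

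In Step~3 the point $x\in\ceros(\gtp_i)$ avoiding every other $\ceros(\gtp_k)$ need not exist: distinct saturated minimal primes can have nested zero sets in the real setting (e.g.\ $\gta=(x(x^2+y^2))$ in $\an(\R^2)$ has minimal primes $(x)$ and $(x^2+y^2)$, and $\ceros((x^2+y^2))=\{0\}\subset\ceros((x))$). Again Lemma~\ref{basic}(i) repairs this at an arbitrary $x\in\ceros(\gtp_i)$: if $\gtp_k\an_{S,x}\subset\gtp_i\an_{S,x}$ with $k\in I_x$, then Lemma~\ref{basic}(i) gives $\gtp_k\subset\gtp_i$, whence $k=i$ by minimality; thus $\gtp_i\an_{S,x}$ is a minimal associated prime of $\gta\an_{S,x}$, its isolated component is unique and equals both $\gtq_i\an_{S,x}$ and $\gtq'_j\an_{S,x}$, and one more application of Lemma~\ref{basic}(i) yields $\gtq_i=\gtq'_j$.
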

\begin{proof}
The proof is similar to that of \cite[2.8]{bp}.
\end{proof}

Primary decompositions enjoy the good behavior one can expect for radical ideals. One proves straightforwardly the following. 

\begin{cor}\label{lemmaidreal}
Let $\gta\subset\an(S)$ be a saturated radical ideal and let $\gta=\bigcap_{i\in I}\gtq_i$ be a primary decomposition of $\gta$. Then each $\gtq_i$ is prime and the primary decomposition is unique.
\end{cor}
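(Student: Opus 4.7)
The plan is to reduce the statement to a finite, Noetherian situation at each stalk and then globalize via the saturation hypothesis. The ring $\an(S)$ is not Noetherian, but each analytic local ring $\an_{S,x}$ is Noetherian, so the classical commutative algebra of radical ideals applies there; saturation lets one lift stalk-wise equalities of ideals to equalities in $\an(S)$, and local finiteness turns the given decomposition into a finite one at each stalk.

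I would fix a point $x\in S$ and examine the stalk decomposition
$$
\gta\an_{S,x}=\bigcap_{i\in I(x)}\gtq_i\an_{S,x},
$$
where $I(x):=\{i\in I:\ x\in\ceros(\gtq_i)\}$ is finite by local finiteness of $\{\ceros(\gtq_i)\}_{i\in I}$; for $i\notin I(x)$ some element of $\gtq_i$ is non-vanishing at $x$, so $\gtq_i\an_{S,x}=\an_{S,x}$ contributes nothing. Combining the irredundancy of the original primary decomposition with Lemma \ref{basic}(i), I would verify that this finite stalk decomposition is itself irredundant in $\an_{S,x}$: for each $j\in I(x)$ the global irredundancy furnishes some $f_j\in\bigcap_{k\neq j}\gtq_k$ with $f_j\notin\gtq_j$, and Lemma \ref{basic}(i) translates the latter into $(f_j)_x\notin\gtq_j\an_{S,x}$ for every $x\in\ceros(\gtq_j)$. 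Since localization preserves radicality, $\gta\an_{S,x}$ is radical; the classical Noetherian result on irredundant primary decompositions of radical ideals then forces $\gtq_i\an_{S,x}=\gtp_i\an_{S,x}$ for every $i\in I(x)$. Off $\ceros(\gtq_i)=\ceros(\gtp_i)$ the stalks of both ideals are the whole ring and the equality persists trivially, so saturation gives $\gtq_i=\gtp_i$ in $\an(S)$.

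Uniqueness of the primary decomposition follows immediately from the preceding lemma: the associated primes $\gtp_i$ are intrinsic to $\gta$, and since every $\gtq_i$ has now been identified with its associated prime (so all components are isolated) the primary components themselves are uniquely determined by $\gta$. The main subtlety I anticipate lies in the stalk-wise irredundancy verification and in the identity $\sqrt{\gtq_i\an_{S,x}}=(\sqrt{\gtq_i})\an_{S,x}=\gtp_i\an_{S,x}$; both rest on Noetherianity of $\an_{S,x}$ and on the characterization of membership in a saturated primary ideal via a single stalk provided by Lemma \ref{basic}(i).
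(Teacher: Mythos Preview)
The paper provides no proof for this corollary beyond the remark that one proves it ``straightforwardly.'' Your stalk-based approach is considerably more elaborate than what is needed, and it contains a genuine gap. The assertion that ``localization preserves radicality, so $\gta\an_{S,x}$ is radical'' treats the germ map $\an(S)\to\an_{S,x}$ as an algebraic localization, which it is not. In fact $\an_{S,x}=\an_{M,x}/\J_{S,x}$ can have nilpotent elements (the well-reduced structure need not be pointwise reduced), so even the zero ideal of $\an(S)$, which is certainly radical, need not extend to a radical ideal of $\an_{S,x}$. The same objection applies to the implicit claim that each $\gtq_i\an_{S,x}$ remains primary; extension of primary ideals along an arbitrary ring map is not automatic. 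Both issues undermine your appeal to the Noetherian classification of primary decompositions of radical ideals at the stalk.

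There is a direct, purely global argument that matches the paper's ``straightforward'' label and bypasses stalks entirely. Fix $j$ and let $f\in\gtp_j$; we show $f\in\gtq_j$. By irredundancy choose $g\in\bigcap_{i\neq j}\gtq_i\setminus\gtq_j$. First, $g\notin\gtp_j$: otherwise $g^m\in\gtq_j$ for some $m$, whence $g^m\in\bigcap_i\gtq_i=\gta$ and, by radicality of $\gta$, $g\in\gta\subset\gtq_j$, a contradiction. Next, since $f\in\gtp_j$ there is $n$ with $f^n\in\gtq_j$, so $(fg)^n=f^ng^n\in\gtq_j\cap\bigcap_{i\neq j}\gtq_i=\gta$, and radicality gives $fg\in\gta\subset\gtq_j$. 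As $\gtq_j$ is primary and $g\notin\sqrt{\gtq_j}=\gtp_j$, we conclude $f\in\gtq_j$. Thus $\gtq_j=\gtp_j$, and uniqueness then follows exactly as you wrote, from the preceding lemma on associated primes. This argument uses only irredundancy and the definitions of primary and radical; it does not even invoke local finiteness or saturation.
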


\section{Tame $C$-semianalytic sets}\label{s3}

The class of amenable $C$-semianalytic sets is the smallest family of subsets of $M$ that contains $C$-analytic sets, open $C$-semianalytic sets and is closed for finite unions and intersections. A first example of amenable $C$-semianalytic sets is basic $C$-semianalytic sets, that is, sets of the type $S:=\{f=0,g_1>0,\ldots,g_s>0\}$ where $f,g_1,\ldots,g_s\in\an(M)$. Consequently, global $C$-semianalytic sets and semialgebraic subsets of $\R^n$ are amenable $C$-semianalytic sets. We present next some enlightening examples.

\begin{example}[$C$-semianalytic set that is not amenable in any of its neighborhoods]\label{tamenot}
For each $k\geq0$ consider the basic $C$-semianalytic set 
$$
S_k:=\{z^2-(y-k)x^2=0, y\geq k\}\subset\R^3. 
$$
Observe that the family $\{S_k\}_{k\geq0}$ is locally finite, so $S:=\bigcup_{k\geq0}S_k$ is a $C$-semianalytic set. 

\begin{center}
\begin{figure}[ht]
\hspace*{-3mm}\includegraphics[width=.50\textwidth]{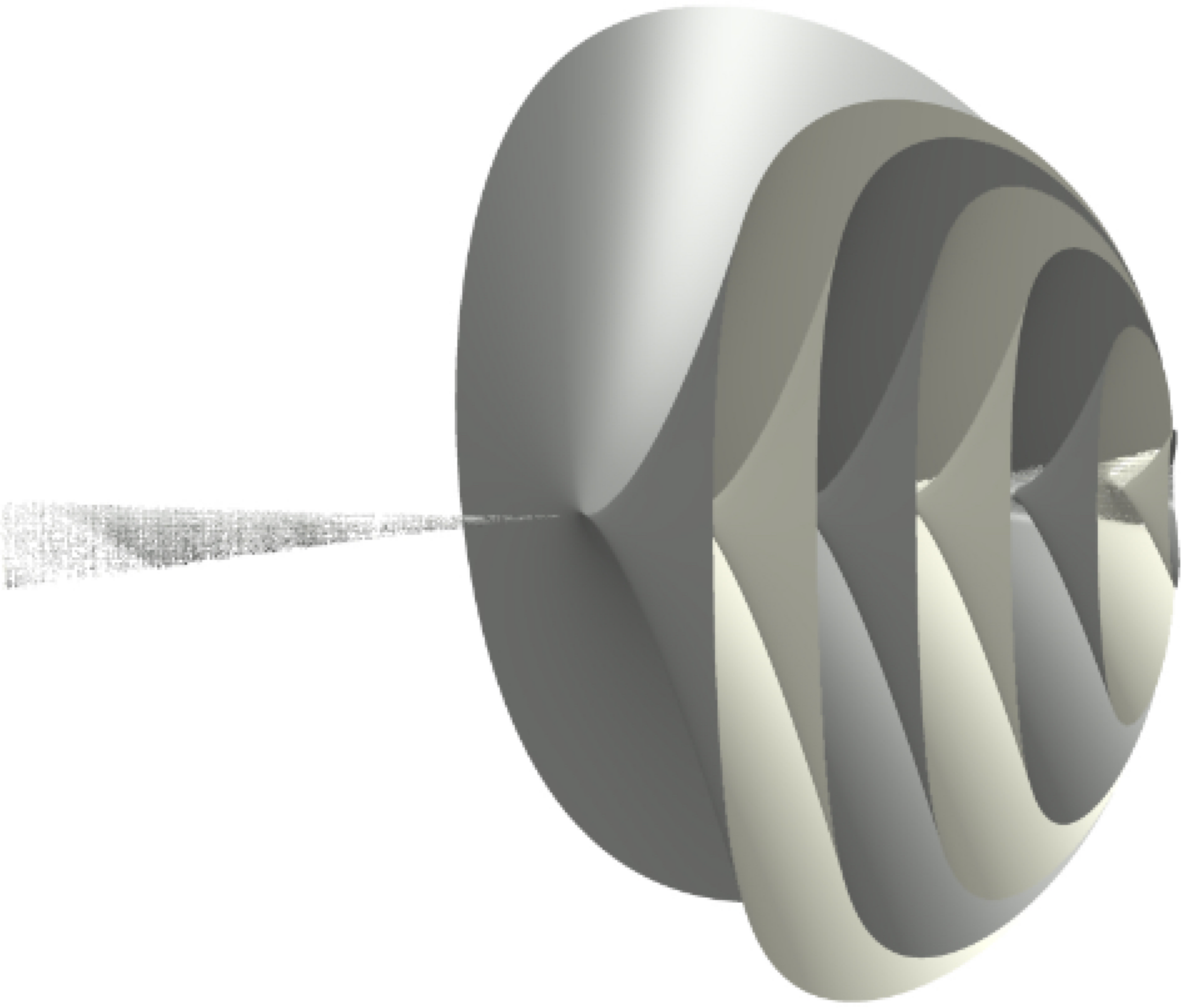}
\caption{$S:=\bigcup_{n\geq0}S_n$ from Example \ref{tamenot}}
\end{figure}
\end{center}

\vspace{-7.5mm}
Observe that $S$ is not an amenable $C$-semianalytic subset of any open neighborhood $U$ of $S$ in $\R^3$ because at the points $\{(0,0,y):\ y\geq0\}$ the family of the Zariski closures $\{\ol{S}^{\zar}_{k,U}\}_{n\geq1}$ is not locally finite, so $\ol{S}^{\zar}=\R^3$.
\end{example}

\begin{example}[$C$-semianalytic set that is locally $C$-analytic but it is not amenable]\label{tamenot0}
Let 
$$
S_{kj}:=\Big\{x^2+2\Big(1+\frac{1}{j}\sin(y)\Big)xz+z^2=0, 2k\pi\leq y\leq(2k+1)\pi\Big\}\cup\{x=0,z=0\}\subset\R^3
$$
for $j\geq 1$ and $k\in\Z$ (see \cite[\S11]{wb}). 
\begin{center}
\begin{figure}[ht]
\hspace*{-3mm}\includegraphics[width=.85\textwidth]{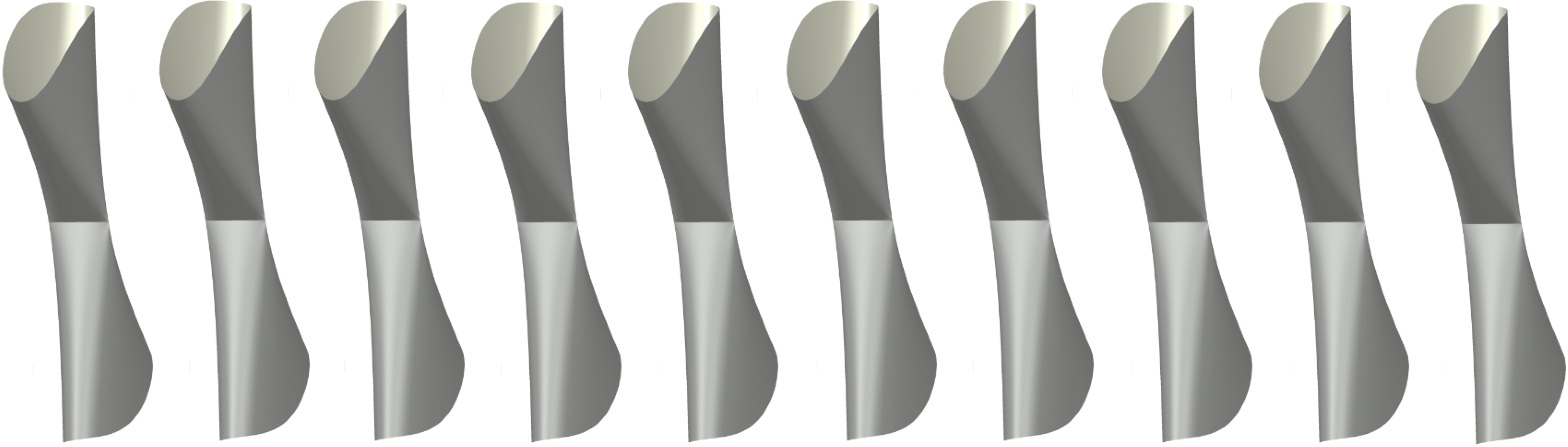}
\caption{Zariski closure $X_j$ of $S_{\ell j}$ (Example \ref{tamenot0})}
\end{figure}
\end{center}
\vspace*{-7mm}
The Zariski closure of $S_{\ell j}$ is $X_j:=\bigcup_{k\in\Z}S_{kj}$. Define $S:=\bigcup_{j\geq1}S_{jj}$. It holds that for each $x\in M$ there exists an open neigborhood $U^x$ such that $S\cap U^x=\{f=0\}\cap U^x$ for some $f\in\an(\R^3)$. As $\bigcup_{j\geq1}X_j\subset\ol{S}^{\zar}$, we conclude that $\ol{S}^{\zar}=\R^3$. Consequently, $S$ is not an amenable $C$-semianalytic set.
\end{example}

\begin{example}[Tame $C$-semianalytic set that is not a global $C$-semianalytic set]\label{buttame}
\ Consider the $C$-semi\-analytic set $S:=\bigcup_{k\geq1} S_k$ where
$$
S_k:=\{0<x<k, 0<y<1/k\}\subset\R^2. 
$$
As $S$ is an open $C$-semianalytic set, it is amenable. We claim: \em $S$ is not a global $C$-semianalytic set\em.

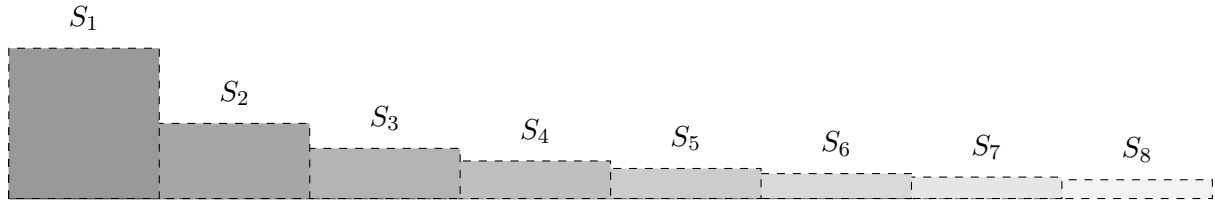
\begin{figure}[ht]
\centering
\begin{tikzpicture}[x=.4cm,y=.4cm]
%\draw[thin,color=black,step=.4cm,dashed] (0,0) grid (40,5);

\draw[dashed,fill=black!5!white] (0,0) -- (0,0.625) -- (40,0.625) -- (40,0) -- (0,0);
\draw[dashed,fill=black!10!white] (0,0) -- (0,0.714) -- (35,0.714) -- (35,0) -- (0,0);
\draw[dashed,fill=black!15!white] (0,0) -- (0,0.83) -- (30,0.83) -- (30,0) -- (0,0);
\draw[dashed,fill=black!20!white] (0,0) -- (0,1) -- (25,1) -- (25,0) -- (0,0);
\draw[dashed,fill=black!25!white] (0,0) -- (0,1.25) -- (20,1.25) -- (20,0) -- (0,0);
\draw[dashed,fill=black!30!white] (0,0) -- (0,1.67) -- (15,1.67) -- (15,0) -- (0,0);
\draw[dashed,fill=black!35!white] (0,0) -- (0,2.5) -- (10,2.5) -- (10,0) -- (0,0);
\draw[dashed,fill=black!40!white] (0,0) -- (0,5) -- (5,5) -- (5,0) -- (0,0);

\draw (2.5,6) node{$S_1$};
\draw (7.5,3.5) node{$S_2$};
\draw (12.5,2.67) node{$S_3$};
\draw (17.5,2.25) node{$S_4$};
\draw (22.5,2) node{$S_5$};
\draw (27.5,1.83) node{$S_6$};
\draw (32.5,1.714) node{$S_7$};
\draw (37.5,1.625) node{$S_8$};

\end{tikzpicture}
\caption{Tame $C$-semianalytic set $S:=\bigcup_{k\geq1} S_k$ (Example \ref{buttame})}
\end{figure}

Otherwise there exist finitely many analytic functions $g_{ij}\in\an(M)$ not all identically zero such that $S=\bigcup_{i=1}^r\{g_{i1}>0,\ldots,g_{is}>0\}$. Notice that the boundary of $S$ is contained in the the $C$-analytic set $X=\bigcup_{i=1}^r\bigcup_{j=1}^s\{g_{ij}=0\}$. Thus, there exist indices $i,j$ such that $g_{ij}$ is not identically zero but it vanishes on infinitely many segments of the type $(a_k-\veps_k,a_k+\veps_k)\times\{1/k\}$ for different $k$ where $a_k\in(k,k+1)$ and $\veps_k>0$. But this implies that $g_{ij}$ vanishes identically in infinitely many lines of the type $y=1/k$, so $g_{ij}=0$, which is a contradiction. Thus, $S$ is not a global $C$-semianalytic set.
\end{example}

As one can expect $C$-semianalytic sets that are open subsets of $C$-analytic sets are amenable.

\begin{lem}\label{openX}
Let $X\subset M$ be a $C$-analytic set and let $S\subset X$ be a $C$-semianalytic set that is open in $X$. Then there exists an open $C$-semianalytic set $W$ such that $S=X\cap W$. In particular, $S$ is an amenable $C$-semianalytic set.
\end{lem}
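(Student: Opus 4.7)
My plan is to produce $W$ very explicitly: take $X$ cut out globally by a single nonnegative analytic function, and let $W$ be $S$ together with the complement of $X$ in $M$. Concretely, since $X$ is $C$-analytic, one may choose $h_1,\ldots,h_r\in\an(M)$ with $X=\{h_1=0,\ldots,h_r=0\}$ and set $h:=h_1^2+\cdots+h_r^2\in\an(M)$; then $h\geq 0$ and $X=\{h=0\}$. The candidate is
$$
W:=S\cup\{h>0\}.
$$

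I would then carry out three short verifications, in the following order. First, $W$ is $C$-semianalytic, because $S$ is $C$-semianalytic by hypothesis, $\{h>0\}$ is basic $C$-semianalytic (with trivial equality part), and the class of $C$-semianalytic sets is closed under finite unions. Second, $W\cap X=S$: since $\{h>0\}$ is disjoint from $X=\{h=0\}$ and $S\subset X$, one gets $W\cap X=(S\cap X)\cup(\{h>0\}\cap X)=S$. Third---and this is where the hypothesis that $S$ is open in $X$ enters decisively---$W$ is open in $M$. At a point of $\{h>0\}$ this is automatic. At a point $x\in S$, openness of $S$ in $X$ yields an open $V\subset M$ containing $x$ with $V\cap X\subset S$; splitting $V=(V\cap X)\sqcup(V\setminus X)$ one has $V\cap X\subset S\subset W$ and $V\setminus X\subset\{h>0\}\subset W$, hence $V\subset W$, so $W$ is a neighborhood of $x$.

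The resulting identity $S=X\cap W$ displays $S$ as the intersection of a $C$-analytic set and an open $C$-semianalytic set, which by the very definition recalled in the Introduction means that $S$ is an amenable $C$-semianalytic set. I do not foresee a genuine obstacle here: the only mildly delicate point is the passage from finitely many generators of $X$ to a single nonnegative global defining function $h$, which rests on the definition of $C$-analytic subset of $M$ and the closure of $\an(M)$ under sums of squares. Everything else reduces to the short topological verification of openness outlined above.
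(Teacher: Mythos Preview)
Your proof is correct and takes a genuinely simpler route than the paper's. The paper works locally: for each $x\in X$ it picks a neighborhood $U^x$ on which $S\cap U^x$ is global $C$-semianalytic, then invokes an external result \cite[3.1]{abs} to produce a finite union $W^x$ of open basic $C$-semianalytic sets with $S\cap U^x=X\cap W^x$, and finally patches $W:=(M\setminus X)\cup\bigcup_{x\in X}W^x$. Your construction $W:=S\cup\{h>0\}$ bypasses both the localization and the citation: the openness of $W$ follows from the short topological splitting $V=(V\cap X)\sqcup(V\setminus X)$ you describe, and $C$-semianalyticity is immediate since $C$-semianalytic sets are closed under finite unions. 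The paper's detour through \cite{abs} does not buy anything extra for the statement as written; your argument is entirely self-contained and arguably preferable here.
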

\begin{proof}
Fix a point $x\in X$ and let $U^x$ be an open neighborhood of $x$ in $M$ such that $S\cap U^x$ is a global $C$-semianalytic set. By \cite[3.1]{abs} there exists a finite union $W^x$ (maybe empty) of open basic $C$-semianalytic subsets of $M$ such that $S\cap U^x=X\cap W^x$. Denote $W:=(M\setminus X)\cup\bigcup_{x\in X}W^x$, which is an open $C$-semianalytic set. Observe that $S=X\cap W$, as required.
\end{proof}

\subsection{Basic properties of amenable $C$-semianalytic sets}\label{ucc}
The family of amenable $C$-semianalytic sets is closed under: 
\begin{itemize}
\item finite unions, 
\item finite intersections, 
\item inverse image under analytic maps between real analytic manifolds, 
\item taking interior, 
\item considering connected components or unions of some of them,
\item set of points of pure dimension $k$.
\end{itemize}
\begin{proof}
The first two properties are clear. For the remaining ones we proceed as follows. \setcounter{paragraph}{0}

\paragraph{}\em Inverse image\em. Let $f:M\to N$ be an analytic map between real analytic manifolds and let $S\subset N$ be an amenable $C$-semianalytic set. Then $f^{-1}(S)\subset M$ is an amenable $C$-semianalytic set.

We may assume that $S=X\cap W$ where $X$ is a $C$-analytic subset of $N$ and $W$ is an open $C$-semianalytic subset of $N$. As $f^{-1}(S)=f^{-1}(X)\cap f^{-1}(W)$ and $f^{-1}(X)$ is a $C$-analytic subset of $M$ and $f^{-1}(W)$ is an open $C$-semianalytic subset of $M$, as required.

\paragraph{}\em Interior\em. Let $S\subset M$ be an amenable $C$-semianalytic set and let $X$ be its Zariski closure. It holds that $\Int_X(S)=S\setminus\cl_X(X\setminus S)=S\cap U$ where $U:=M\setminus\cl_X(X\setminus S)$ is an open $C$-semianalytic set. Then $\Int_X(S)$ is amenable.

\paragraph{}\em Connected components\em. Let $S\subset M$ be an amenable $C$-semianalytic set and let $\{S_i\}_{i\geq1}$ be the family of the connected components of $S$. We know by \cite[2.7]{bm} that $\{S_i\}_{i\geq1}$ is a locally finite family, so the connected components of $S$ are open and closed subsets of $S$. Let ${\mathfrak F}\subset\{i\geq1\}$ be any non-empty subset. We claim that $T:=\bigcup_{i\in{\mathfrak F}}S_i$ is an amenable $C$-semianalytic set. Notice that: 
$$
T=S\cap\Big(M\setminus\bigcup_{i\not\in{\mathfrak F}}\cl(S_i)\Big)=S\cap U
$$
where $U:=M\setminus\bigcup_{i\not\in{\mathfrak F}}\cl(S_i)$ is an open $C$-semianalytic set. This last claim follows because the family $\{S_i\}_{i\not\in{\mathfrak F}}$ is locally finite and each $\cl(S_i)$ is a $C$-semianalytic set \cite[3.A]{abf1}. Consequently $T$ is an amenable $C$-semianalytic set.

\paragraph{}\em Set of points of pure dimension $k$\em. Let $S^*_{(k)}$ be the subset of points $x\in S$ such that the germ $S_x$ is pure dimensional and $\dim(S_x)=k$. We have $S^*_{(k)}=S\cap(M\setminus\bigcup_{j\neq k}\cl(S_{(j)}))$ for each $k\geq0$ where $S_{(j)}$ is the set of points of $S$ of local dimension $j$ for $j\geq0$. Consequently, $S^*_{(k)}$ is amenable $C$-semianalytic.
\end{proof}

\begin{remarks}
(i) A locally finite union of amenable $C$-semianalytic sets is not in general an amenable $C$-semianalytic set. Each $C$-semianalytic set is a locally finite union of basic $C$-semianalytic sets, but there are many $C$-semianalytic sets that are not amenable. 

(ii) Let $U:=\R^3\setminus S$ where $S$ is the closed $C$-semianalytic set defined in Example \ref{tamenot}. As $U$ is an open $C$-semianalytic set, it is amenable. However is complement $S=\R^3\setminus U$ is not amenable.

(iii) As we will see in the Example \ref{dimktamenot}, the ($C$-semianalytic) set of points of local dimension $k$ of an amenable $C$-semianalytic set may not to be amenable.
\end{remarks}

\begin{example}[Bad behavior of the closure of amenable $C$-semianalytic sets]\label{ex}
(i) Consider the open $C$-semianalytic set:
$$
S:=\bigcup_{k\geq1}S_k\quad\text{where $S_k:=\{y<kx+z,y>kx-z,k<x<k+1,0<z<1\}\subset\R^3$}
$$
Clearly $S$ is an amenable $C$-semianalytic set. We have
$$
\cl(S):=\bigcup_{k\geq1}\cl(S_k)\quad\text{where $\cl(S_k):=\{y\leq kx+z,y\geq kx-z,k\leq x\leq k+1,0\leq z\leq1\}$}
$$
Consider the $C$-analytic set $X:=\{z=0\}$ and observe that 
$$
\cl(S)\cap X:=\bigcup_{k\geq1}\cl(S_k)\cap X\quad\text{where $\cl(S_k)\cap X:=\{y=kx,k\leq x\leq k+1,z=0\}$}
$$
which is not an amenable $C$-semianalytic set. To that end observe that $\ol{\cl(S)\cap X}^{\zar}=\{z=0\}$ has dimension $2$ while $\cl(S)\cap X$ has dimension $1$. As $X$ is amenable, $\cl(S)$ is not amenable.

(ii) The closure $T:=\cl(S)$ in $\R^2$ of the amenable $C$-semianalytic set $S$ in Example \ref{buttame} is not an amenable $C$-semianalytic set. To prove this one can apply Theorem \ref{algor1} below.
\end{example}

\subsection{Characterization of amenable $C$-semianalytic sets}
In \cite{abf1} we show that $C$-semi\-analytic set are countable locally finite unions of basic $C$-semianalytic sets. Here we obtain the corresponding result for amenable $C$-semianalytic sets. 

\begin{prop}\label{normal2}
Let $S\subset M$ be a $C$-semianalytic set. Then $S$ is amenable if and only if there exists a countable locally finite family of basic $C$-semianalytic sets $\{S_i\}_{i\geq1}$ such that $S=\bigcup_{i\geq1}S_i$ and the family $\{\ol{S_i}^{\zar}\}_{i\geq1}$ is locally finite (after eliminating repetitions).
\end{prop}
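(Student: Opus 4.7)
For the forward implication, assume $S=\bigcup_{k=1}^{n}X_k\cap U_k$ with $X_k$ a $C$-analytic subset of $M$ and $U_k$ an open $C$-semianalytic subset. Decompose each $X_k=\bigcup_{j} X_{k,j}$ into its locally finite family of irreducible components, and each $U_k=\bigcup_{m}V_{k,m}$ as a locally finite union of basic open $C$-semianalytic sets. Each $X_{k,j}$ is the zero set of a single analytic function (the sum of squares of generators of its ideal), so the pieces $X_{k,j}\cap V_{k,m}$ form a locally finite family of basic $C$-semianalytic sets. When non-empty, $X_{k,j}\cap V_{k,m}$ is an open subset of the irreducible set $X_{k,j}$, so its Zariski closure equals $X_{k,j}$; therefore the distinct Zariski closures form a subfamily of the locally finite family $\{X_{k,j}\}_{k,j}$, which settles this direction.

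For the converse, the main challenge is that amenability requires a \emph{finite} union of sets of the form $X\cap U$, whereas the hypothesis naturally provides a countable one. My plan is an induction on $\dim S$. Writing $S_i=\{f_i=0, g_{i,1}>0,\ldots,g_{i,s_i}>0\}$ and noting that $Y_i:=\ol{S_i}^{\zar}\subset\{f_i=0\}$, one obtains $S_i=Y_i\cap W_i$ with $W_i:=\{g_{i,1}>0,\ldots,g_{i,s_i}>0\}$ basic open. Splitting each $S_i$ along the irreducible components of $Y_i$, I may assume each $Y_i$ is irreducible, so $S_i$ is open in $Y_i$ and $\dim S_i=\dim Y_i$. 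Set $Z:=\bigcup_i Y_i$, which is $C$-analytic by the local finiteness of $\{Y_i\}$ modulo repetitions and satisfies $\dim Z=\dim S$.

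The core step is the decomposition $S=S^{\circ}\cup S^{\partial}$ where $S^{\circ}:=\{p\in S : \exists\text{ open nbhd }N\text{ of }p\text{ in }M\text{ with }Z\cap N\subset S\}$ and $S^{\partial}:=S\cap\cl(Z\setminus S)$. Both sets are $C$-semianalytic (using that $C$-semianalytic sets are stable under complement and closure), and $S^{\circ}$ is open in $Z$ by construction, so Lemma~\ref{openX} applied to $Z$ produces an open $C$-semianalytic $U\subset M$ with $S^{\circ}=Z\cap U$ (concretely one may take $U:=(M\setminus Z)\cup S^{\circ}$). If $p\in S$ meets only one Zariski closure $Y_{i_0}$, then since $T_{i_0}:=\bigcup_{\ell : Y_\ell=Y_{i_0}}S_\ell$ is open in the irreducible $Y_{i_0}$, a sufficiently small neighborhood $N$ of $p$ satisfies $Z\cap N=Y_{i_0}\cap N\subset T_{i_0}\subset S$, so $p\in S^{\circ}$. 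Hence $S^{\partial}\subset\bigcup_{i\neq j}Y_i\cap Y_j$, and the distinctness and irreducibility of the $Y_i$'s give $\dim(Y_i\cap Y_j)<\dim Y_i$ for $i\neq j$, so $\dim S^{\partial}<\dim S$. The inductive hypothesis applies to $S^{\partial}$, which inherits a locally finite basic decomposition with locally finite Zariski closures (obtained by intersecting the $S_i$'s with the $C$-analytic set $\bigcup_{k\neq i}Y_k$ and using sum-of-squares to retain single defining equations), and yields that $S^{\partial}$ is amenable. Consequently $S=S^{\circ}\cup S^{\partial}$ is a finite union of sets of the form $X\cap U$. The main obstacle I anticipate is the careful verification of this inheritance step: one must confirm that the Zariski closures of the new basic pieces of $S^{\partial}$ remain locally finite modulo repetitions, which reduces to their being $C$-analytic subsets of the locally finite family $\{Y_i\cap Y_j\}_{i\neq j}$.
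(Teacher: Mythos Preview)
Your argument is sound and follows the same overall strategy as the paper---Lemma~\ref{openX} for the ``open part'' plus induction on dimension for the remainder---but the decompositions differ. For the forward implication the paper first invokes Lemma~\ref{decomp} to reduce to $S=Z\cap U$ a real analytic manifold with $Z=\ol{S}^{\zar}$, then passes to connected components (whose Zariski closures are irreducible components of $Z$) and applies Lemma~\ref{odes}; your route via irreducible components of $X_k$ and a basic-open decomposition of each $U_k$ is more direct and bypasses Lemma~\ref{decomp}. For the converse the paper splits $S=(S\setminus X')\cup(S\cap X')$ with $X'$ the union of the singular loci and low-dimensional irreducible components of the $\ol{S_i}^{\zar}$, so that $S\setminus X'$ is open in the \emph{manifold} $X\setminus X'$; you split instead as $S^\circ\cup S^\partial$ with $S^\circ=\Int_Z(S)$ open in $Z$ itself. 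Both invocations of Lemma~\ref{openX} are legitimate; the paper's choice makes the lower-dimensional piece a bit more explicit, yours avoids talking about singular loci.

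Two small points to tighten. Your claim $\dim(Y_i\cap Y_j)<\dim Y_i$ fails when $Y_i\subsetneq Y_j$; what you actually need, and what holds, is $\dim(Y_i\cap Y_j)<d=\dim S$ (if $Y_i\subset Y_j$ then $\dim Y_i<\dim Y_j\le d$, otherwise $Y_i\cap Y_j$ is a proper $C$-analytic subset of the irreducible $Y_i$). For the inheritance step, note that intersecting the $S_i$ with $\bigcup_{k\ne i}Y_k$ produces $S\cap Z'$ with $Z':=\bigcup_{Y_i\ne Y_j}Y_i\cap Y_j$, which in general strictly contains $S^\partial$; this is harmless since $S=S^\circ\cup(S\cap Z')$ still holds, but say so. Finally, ``being subsets of the locally finite family $\{Y_i\cap Y_j\}$'' does not by itself give local finiteness of the new Zariski closures: the clean fix is to further split each $S_i\cap Y_k$ along the irreducible components of $Y_i\cap Y_k$, so that each resulting basic piece, when non-empty, has Zariski closure \emph{equal} to one of those components; since these components depend only on the unordered pair $\{Y_i,Y_k\}$ (not on the particular index $i$), they form a locally finite family.
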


Before proving Proposition \ref{normal2} we need some preliminary work. The following result is a kind of weak version of Proposition \ref{normal2} for open $C$-semianalytic sets. 

\begin{lem}\label{odes}
Let $U\subset M$ be an open $C$-semianalytic set. Then there exists a locally finite countable family $\{U_k\}_{k\geq1}$ of open basic $C$-semianalytic sets such that $U=\bigcup_{j\geq1}U_j$.
\end{lem}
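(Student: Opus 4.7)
The plan is to build the locally finite cover in two stages: first, exhibit at each point of $U$ a small open basic $C$-semianalytic neighborhood lying in $U$; second, extract a countable locally finite subfamily by combining analytic bump functions on $M$ with the locally finite decomposition of $U$ into basic pieces furnished by \cite[Lem.~3.1]{abf1}.

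For the first step, fix $x \in U$. By the $C$-semianalyticity of $U$ there is an open neighborhood $W^x$ of $x$ on which $U$ admits a finite description in terms of global analytic functions $f_k, g_{k,j}\in\an(M)$. Since $U$ is open and $x$ is an interior point, a Baire-type argument forces at least one of the basic constituents $\{f_k=0,g_{k,1}>0,\dots,g_{k,s_k}>0\}$ to have $x$ in its topological interior, which in turn forces the corresponding $f_k$ to vanish on an open neighborhood of $x$; after dropping this identically vanishing equality I obtain global analytic functions $g_1,\ldots,g_s\in\an(M)$ with $x\in\{g_1>0,\ldots,g_s>0\}$ and $\{g_1>0,\ldots,g_s>0\}\cap W^x=U\cap W^x$. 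To trap the resulting open basic set inside $U$ globally, I would produce an analytic ``bump'' $\phi^x\in\an(M)$ with $\phi^x(x)>0$ and $\{\phi^x>0\}\subset W^x$; such a $\phi^x$ exists because $M$, being a real analytic manifold, embeds properly and analytically into some $\R^N$ by Grauert, so pulling back a polynomial of the form $c-\|\cdot-p\|^2$ works; alternatively one uses Cartan's Theorem B on a Stein complexification of $M$ (see \ref{complexification}). Then $V^x:=\{g_1>0,\ldots,g_s>0,\phi^x>0\}$ is an open basic $C$-semianalytic set with $x\in V^x\subset U$.

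For the second step, I would apply \cite[Lem.~3.1]{abf1} to decompose $U=\bigcup_{i\geq1}S_i$ as a countable locally finite union of basic $C$-semianalytic sets $S_i\subset U$, and fix a compact exhaustion $\{K_n\}_{n\geq1}$ of $M$ with $K_n\subset\Int(K_{n+1})$. Using a proper analytic function $\rho\in\an(M)$ (again from the Grauert embedding), I further replace each $S_i$ by its relatively compact subpieces $S_{i,n}:=S_i\cap\{c_n-\rho>0\}$, which remain basic $C$-semianalytic. For every such piece $S_{i,n}$ I cover it by finitely many $V^x$'s constructed as in the first step, with each $\phi^x$ arranged so that $\{\phi^x>0\}$ lies in a prescribed neighborhood of $S_{i,n}$ meeting only finitely many other $S_{j,m}$'s; this shrinking is possible thanks to the local finiteness of $\{S_i\}$ together with the standard fact that in a paracompact manifold every locally finite family admits a locally finite open neighborhood system. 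Assembling these finite subcoverings over $(i,n)$ produces a countable family of open basic $C$-semianalytic sets covering $U$, locally finite in $M$ by construction.

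The main obstacle is the interplay between the global containment $V^x\subset U$ and the local finiteness in $M$: the bump functions $\phi^x$ must be produced globally on $M$ (so the Stein/Grauert machinery is unavoidable), and the family must remain locally finite at points of $\partial U$, where open basic pieces could a priori accumulate in $M$ even though they are well-separated inside $U$. Overcoming this requires coupling the thick-annulus confinement afforded by the bumps with the locally finite decomposition from \cite{abf1}.
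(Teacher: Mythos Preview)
Your Baire claim in step 1 fails: for an arbitrary presentation $U\cap W^x=\bigcup_k\{f_k=0,g_{k,1}>0,\dots\}$, nothing forces any single piece to contain $x$ in its interior. Take $M=\R$, $U=\R$, $x=0$, with the presentation $U=\{t>0\}\cup\{-t>0\}\cup\{t=0\}$; the only basic piece containing $0$ is $\{t=0\}$, which has empty interior. What you actually need here is the Finiteness Theorem for open global semianalytic sets (see \cite{abs}): an open global $C$-semianalytic set can be rewritten as a finite union of \emph{open} basic sets. That is a nontrivial separation result, not a Baire category fact. Your step 2 is also incomplete: you want to cover each relatively compact $S_{i,n}$ by \emph{finitely} many $V^x\subset U$, but $\ol{S_{i,n}}$ can meet $\partial U$, where no $V^x$ is available, so the finite-subcover extraction is unjustified.

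The paper's route is shorter and sidesteps both issues. Rather than building open basic neighborhoods inside $U$ point by point, it covers all of $M$ (not $U$) by a countable locally finite family $V_j=\{g_j>0\}$ of open basic sets---obtained via paracompactness plus analytic approximation of continuous bump functions---and sets $U_j:=U\cap V_j$. Each $U_j$ is automatically open and \emph{global}, since $V_j$ lies inside some $U^x$ on which $U$ already has a global description. The passage from ``open global'' to ``finite union of open basic'' is invoked once, at the very start, as the reduction ``it is enough to prove\dots''. Local finiteness is inherited directly from $\{V_j\}$, with no need to worry about accumulation at $\partial U$.
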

\begin{proof}
It is enough to prove: \em there exists a locally finite countable family $\{U_k\}_{k\geq1}$ of open global $C$-semianalytic sets such that $U=\bigcup_{j\geq1}U_j$\em.
 
For each $x\in M$ pick an open neighborhood $U^x$ such that $U\cap U^x$ is an open global $C$-semianalytic set. As $M$ is paracompact and it admits countable exhaustion by compact sets, there exists a countable locally finite open refinements ${\mathscr W}:=\{W_j\}_{j\geq1}$ and ${\mathscr W}':=\{W_j'\}_{j\geq1}$ of ${\mathscr U}:=\{U^x\}_{x\in M}$ such that $\cl(W_j')\subset W_j$ for each $j\geq1$. As the the closed sets $\cl(W_j')$ and $M\setminus W_j$ are disjoint, there exists a continuous function $f_j:M\to\R$ such that $f_j|_{\cl(W_j')}\equiv 1$ and $f_j|_{M\setminus W_j}\equiv-1$. Let $g_j$ be an analytic approximation of $f_j$ such that $|g_j-f_j|<\frac{1}{2}$. Notice that
$$
\cl(W_j')\subset V_j:=\{g_j>0\}\subset W_j
$$ 
for each $j\geq1$. Thus, the family $\{U_j:=U\cap V_j\}_{j\geq1}$ is countable, locally finite, its members are open global $C$-semianalytic sets and satisfies $U=\bigcup_{j\geq1}U_j$, as required.
\end{proof}

The following result allows to reduce the proof of Proposition \ref{normal2} to the advantageous case of open $C$-semianalytic subsets of real analytic manifolds. Its proof is quite similar to and less technical than the one of Lemma \ref{pd} below. We leave the concrete details to the reader.

\begin{lem}\label{decomp}
Let $S\subset M$ be an amenable $C$-semianalytic set. Then there exists finitely many $C$-analytic sets $Z_1,\ldots,Z_r$ and finitely many open $C$-semianalytic sets $U_1,\ldots,U_r$ such that $S=\bigsqcup_{i=1}^r(Z_i\cap U_i)$, each $Z_i\cap U_i$ is a real analytic manifold, $Z_i$ is the Zariski closure of $Z_i\cap U_i$ and $\dim(Z_{i+1}\cap U_{i+1})<\dim(Z_i\cap U_i)$ for $i=1,\ldots,r-1$.
\end{lem}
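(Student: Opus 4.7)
The plan is to proceed by induction on $d:=\dim(S)$, at each stage peeling off a top-dimensional manifold stratum $Z_1\cap U_1$ while keeping the remainder amenable. The base case $d=0$ is immediate: a $0$-dimensional amenable $C$-semianalytic set is a closed locally finite discrete subset of $M$, hence itself $C$-analytic; take $r=1$, $Z_1:=S=\ol{S}^{\zar}$ and $U_1:=M$.

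For the inductive step, assume the result for dimensions strictly less than $d$ and let $S$ be amenable of dimension $d>0$. Set $X:=\ol{S}^{\zar}$ (of dimension $d$ by amenability), let $Z_1$ be the union of the $d$-dimensional irreducible components of $X$, let $X'$ be the union of the remaining (lower-dimensional) components, and $N:=\Sing(X)\cup X'$, a $C$-analytic set of dimension at most $d-1$. The open set $\Omega:=M\setminus N$ meets $X$ in the real analytic manifold $Z_1\cap\Omega$ of pure dimension $d$. Fix an amenable decomposition $S=\bigcup_{j=1}^m(X_j\cap V_j)$ with each $X_j$ irreducible $C$-analytic (refining by irreducible components if needed); each $X_j$ is then contained in a unique irreducible component of $X$, and each $X_j$ with $\dim X_j=d$ coincides with a $d$-dimensional irreducible component of $X$.

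The key construction is the $C$-analytic set
$$
Y:=N\cup\bigcup_{\dim X_j=d}\ol{X_j\cap\partial V_j}^{\,\zar}\cup\bigcup_{\dim X_j<d}X_j,
$$
which has dimension at most $d-1$. Indeed, for each $j$ with $\dim X_j=d$, the set $X_j\cap\partial V_j$ is a proper $C$-semianalytic subset of the irreducible $X_j$ (since $V_j\cap X_j$ is nonempty open in $X_j$ and disjoint from $\partial V_j$), so its Zariski closure is a proper $C$-analytic subset of $X_j$ of dimension at most $d-1$. Moreover $Y$ contains the boundary part $B:=S\cap\cl(X\setminus S)$: for any $x\in B\setminus N$, the point $x$ is a regular point of $X$ on a unique $d$-dimensional irreducible component $C_k$, and choosing $j$ with $x\in X_j\cap V_j$ gives either $\dim X_j<d$ (so $x\in X_j\subset Y$) or $\dim X_j=d$ with $X_j=C_k$, in which case any sequence in $X\setminus S$ approaching $x$ lies eventually in $C_k\setminus V_j$, forcing $x\in\partial_{C_k}(X_j\cap V_j)\subset X_j\cap\partial V_j\subset Y$.

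Set $U_1:=M\setminus(Y\cup\cl(X\setminus S))$, open $C$-semianalytic, and $T:=S\cap Y$, amenable of dimension at most $d-1$. Then $Z_1\cap U_1=W\setminus Y$ where $W:=\Int_X(S)\cap\Omega=Z_1\cap(M\setminus(N\cup\cl(X\setminus S)))$ is an open subset of the manifold $Z_1\cap\Omega$; hence $Z_1\cap U_1$ is a real analytic manifold of pure dimension $d$ contained in $S$, with Zariski closure $Z_1$ (each $d$-dimensional irreducible component of $Z_1$ meets $W$ in a $d$-dimensional open subset which stays $d$-dimensional after removing the $(<d)$-dimensional set $Y$). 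The decomposition $S=(Z_1\cap U_1)\sqcup T$ is disjoint (since $U_1\cap Y=\emptyset$) and covers $S$: any $x\in S\setminus Y$ satisfies $x\notin N$ and $x\notin\cl(X\setminus S)$ (the latter because $Y\supset B$), so $x\in W\setminus Y=Z_1\cap U_1$. Applying the inductive hypothesis to $T$ yields $T=\bigsqcup_{i=2}^r(Z_i\cap U_i)$ with the required properties, and prepending $Z_1\cap U_1$ completes the proof. The main obstacle is the construction of the $C$-analytic set $Y$ of dimension $<d$ containing $B$: this makes $T=S\cap Y$ automatically amenable as the intersection of amenable with $C$-analytic, circumventing the fact that the complement of an open $C$-semianalytic set need not be amenable.
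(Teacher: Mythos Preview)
Your overall strategy---peel off a top-dimensional manifold piece, intersect the remainder with a $C$-analytic set of strictly smaller dimension, and induct---is exactly the paper's approach (the paper refers to the analogous argument for Lemma~\ref{pd}). However, there is a genuine error in your construction of $Y$.

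The claim that $\ol{X_j\cap\partial V_j}^{\zar}$ has dimension at most $d-1$ is false. Your justification (``proper $C$-semianalytic subset of the irreducible $X_j$, hence proper Zariski closure'') confuses $C$-analytic with $C$-semianalytic: a proper $C$-semianalytic subset of an irreducible $C$-analytic set can perfectly well be Zariski-dense. Concretely, take $M=\R^2$, $X_j=\{y=0\}$, $V_j=\{x>0\}\cup\{y\neq0\}$. Then $X_j\cap V_j=\{(x,0):x>0\}$, while $\partial V_j=\{(x,0):x\le0\}$ and $X_j\cap\partial V_j=\{(x,0):x\le0\}$ has Zariski closure all of $X_j$. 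With this choice your set $Y$ contains $X_j$, so $T=S\cap Y=S$ and the induction does not progress.

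The fix is simply to delete the boundary terms from $Y$. Revisit your Case~2 in the verification of $B\subset Y$: if $x\in B\setminus N$, $x\in X_j\cap V_j$ with $\dim X_j=d$, and $X_j=C_k$, then any sequence $x_n\in X\setminus S$ with $x_n\to x$ eventually lies in $C_k$ (since $x\in\Reg(X)$), hence in $C_k\setminus V_j$; but $V_j$ is open and $x\in V_j$, so eventually $x_n\in V_j$ as well---a contradiction. Thus Case~2 is \emph{vacuous}, and $B\setminus N\subset\bigcup_{\dim X_j<d}X_j$ directly. (Your conclusion ``$x\in\partial_{C_k}(X_j\cap V_j)$'' was wrong anyway: $x$ lies in the open set $X_j\cap V_j$, not on its boundary.) With $Y:=N\cup\bigcup_{\dim X_j<d}X_j$, a $C$-analytic set of dimension $<d$, the rest of your argument goes through and coincides with the paper's: the remainder $S\cap(\Sing(Z_1)\cup X'\cup\bigcup_{\dim X_j<d}X_j)$ is amenable of smaller dimension, and $S$ minus this set is open in the manifold $Z_1\setminus N$.
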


We are ready to prove Proposition \ref{normal2}.

\begin{proof}[Proof of Proposition \em\ref{normal2}]
We prove first the `only if' implication. By Lemma \ref{decomp} we may assume $S=Z\cap U$ where $Z$ is a $C$-analytic set, $U$ is an open $C$-semianalytic set, $S$ is a real analytic manifold and $Z$ is the Zariski closure of $S$. Let $\{C_{\ell}\}_{\ell\geq1}$ be the collection of the connected components of $Z\cap U$. Notice that the Zariski closure $\ol{C_{\ell}}^{\zar}$ is an irreducible component of $Z$. Each $C$-semianalytic set $C_{\ell}$ is an open subset of $Z$, so by Lemma \ref{openX} there exists an open $C$-semianalytic set $V_{\ell}$ such that $C_\ell=Z\cap V_\ell$. By Lemma \ref{odes} we write $V_\ell$ as a countable locally finite union $V_\ell=\bigcup_{j\geq1}V_{\ell j}$ of open basic $C$-semianalytic sets. Consequently $S_{\ell j}:=Z\cap V_{\ell j}$ is either empty or a basic $C$-semianalytic set whose Zariski closure is an irreducible component of $Z$. The collection $\{S_{\ell,j}\}_{\ell,j\geq1}$ satisfies the required conditions. 

We prove next the `if' implication by induction on the dimension of $S$. If $\dim(S)=0$, then $S$ is a $C$-analytic set and in particular it is amenable. Assume that the result is true if $\dim(S)<d$ and let us check that it is also true for dimension $d$. 

Consider the $C$-analytic set $X:=\bigcup_{i\geq1}\ol{S_i}^{\zar}$ and let $X'$ be the union of $\Sing(\ol{S_i}^{\zar})$ and the irreducible components of $\ol{S_i}^{\zar}$ of dimension $<d$ for $i\geq1$. It holds that $X'$ is a $C$-analytic set of dimension $<d$. Consequently, each intersection $S_i\cap X'$ is a basic $C$-semianalytic set of dimension $<d$. In addition, the countable family $\{S_i\cap X'\}_{i\geq1}$ is locally finite and $\{\ol{S_i\cap X'}^{\zar}\}_{i\geq1}$ is locally finite (after eliminating repetitions). By induction hypothesis $S\cap X'=\bigcup_{i\geq1}S_i\cap X'$ is an amenable $C$-semianalytic set. It only remains to check: \em $S\setminus X'$ is an amenable $C$-semianalytic set\em. 

Notice that $S_i\setminus X'$ is either empty or a basic $C$-semianalytic set of dimension $d$. We claim: \em $S_i\setminus X'$ is an open subset of the real analytic manifold $X\setminus X'$ for each $i\geq1$\em, so the $C$-semianalytic set $S\setminus X'=\bigcup_{i\geq1}S_i\setminus X'$ is an open subset of $X$. Consequently, $S\setminus X'$ is by Lemma \ref{openX} an amenable $C$-semianalytic set.

Indeed, write $S_i=\ol{S_i}^{\zar}\cap V_i$ where $V_i$ is a basic open $C$-semianalytic set. As $\Sing(\ol{S_i}^{\zar})$ and the irreducible components of $\ol{S_i}^{\zar}$ of dimension strictly smaller than $d$ are contained in $X'$, the basic $C$-semianalytic set $S_i\setminus X'=(\ol{S_i}^{\zar}\setminus X')\cap V_i$ is an open subset of the real open analytic manifold $(\ol{S_i}^{\zar}\setminus X')$. As $\ol{S_i}^{\zar}\setminus X'$ is in turn an open subset of the real analytic manifold $X\setminus X'$, the claim holds, as required. 
\end{proof}

As a consequence of Proposition \ref{normal2} we find a sufficient condition under which a countable locally finite union of amenable $C$-semianalytic sets keeps amenable.

\begin{cor}\label{lfuok2}
Let $\{S_i\}_{i\geq1}$ be a locally finite collection of amenable $C$-semianalytic sets such that the family $\{\ol{S_i}^{\zar}\}_{i\geq1}$ is locally finite (after eliminating repetitions). Then $S=\bigcup_{i\geq1}S_i$ is an amenable $C$-semianalytic set.
\end{cor}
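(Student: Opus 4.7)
The plan is to reduce the statement to Proposition \ref{normal2} by producing, for the union $S=\bigcup_{i\geq1}S_i$, a single countable locally finite family of basic $C$-semianalytic sets whose Zariski closures form a locally finite family (after eliminating repetitions). Since each $S_i$ is amenable, Proposition \ref{normal2} supplies a countable locally finite family $\{S_{ij}\}_{j\geq1}$ of basic $C$-semianalytic sets with $S_i=\bigcup_{j\geq1}S_{ij}$ such that $\{\ol{S_{ij}}^{\zar}\}_{j\geq1}$ is locally finite (after eliminating repetitions). The double-indexed family $\{S_{ij}\}_{i,j\geq1}$ is countable and its union is $S$, so everything reduces to checking (i) local finiteness of $\{S_{ij}\}_{i,j\geq1}$ in $M$ and (ii) local finiteness (after eliminating repetitions) of the family $\{\ol{S_{ij}}^{\zar}\}_{i,j\geq1}$.

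For (i), fix $x\in M$. Local finiteness of $\{S_i\}_{i\geq 1}$ yields an open neighborhood $V$ of $x$ meeting only finitely many $S_i$, say for $i\in I_x$. For each $i\in I_x$, local finiteness of $\{S_{ij}\}_{j\geq1}$ yields an open neighborhood $V_i\subset V$ of $x$ meeting $S_{ij}$ for only finitely many $j$. The intersection $V\cap\bigcap_{i\in I_x}V_i$ is an open neighborhood of $x$; since $S_{ij}\subset S_i$, this neighborhood does not meet $S_{ij}$ for $i\notin I_x$, and meets $S_{ij}$ for at most finitely many $j$ for each $i\in I_x$. Hence it meets only finitely many $S_{ij}$.

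For (ii), the argument runs in parallel using the containments $\ol{S_{ij}}^{\zar}\subset\ol{S_i}^{\zar}$. Given $x\in M$, local finiteness (after eliminating repetitions) of $\{\ol{S_i}^{\zar}\}_{i\geq1}$ produces an open neighborhood $W$ of $x$ that meets only finitely many \emph{distinct} closures $\ol{S_i}^{\zar}$; let $J_x$ be a finite set of indices realizing those distinct closures. For $i\notin J_x$ with $\ol{S_i}^{\zar}$ different from all $\ol{S_{i'}}^{\zar}$, $i'\in J_x$, the containment forces $\ol{S_{ij}}^{\zar}\cap W=\varnothing$. For each $i\in J_x$, local finiteness (after eliminating repetitions) of $\{\ol{S_{ij}}^{\zar}\}_{j\geq1}$ provides a neighborhood of $x$ meeting only finitely many distinct $\ol{S_{ij}}^{\zar}$; intersecting $W$ with these finitely many neighborhoods yields an open neighborhood of $x$ that meets only finitely many \emph{distinct} sets among $\{\ol{S_{ij}}^{\zar}\}_{i,j\geq1}$.

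With (i) and (ii) established, Proposition \ref{normal2} immediately concludes that $S$ is amenable. The only genuine subtlety is the bookkeeping in (ii): one must measure finiteness at the level of distinct Zariski closures rather than indices, but the inclusion $\ol{S_{ij}}^{\zar}\subset\ol{S_i}^{\zar}$ makes the outer local finiteness automatically control the contributing $i$, leaving only finitely many inner problems to handle, each again by hypothesis. No real obstacle beyond this routine diagonalization is expected.
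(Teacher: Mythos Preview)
Your approach mirrors the paper's exactly: decompose each $S_i$ via Proposition~\ref{normal2} into basic pieces $S_{ij}$ and verify that the double family $\{S_{ij}\}_{i,j}$ again satisfies the hypotheses of Proposition~\ref{normal2}. Step~(i) is fine. The gap is in step~(ii). Your finite set $J_x$ indexes the \emph{distinct} Zariski closures $\ol{S_i}^{\zar}$ meeting $W$, not the indices $i$ themselves. There may be infinitely many $i\notin J_x$ with $\ol{S_i}^{\zar}=\ol{S_{i'}}^{\zar}$ for some $i'\in J_x$; for each such $i$ the sets $\ol{S_{ij}}^{\zar}\subset\ol{S_i}^{\zar}$ can still meet $W$, and nothing in your argument bounds the number of \emph{distinct} such sub-closures as $i$ ranges over this infinite set. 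The inclusion $\ol{S_{ij}}^{\zar}\subset\ol{S_i}^{\zar}$ tells you only which ambient closure they sit inside, not how many different ones occur.

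This is not a bookkeeping slip. In $M=\R^3$ take $S_i=B_i\cup\ell_i$ for $i\ge1$, where $B_i$ is the open ball of radius $\tfrac13$ about $(i,0,0)$ and $\ell_i=\{x=0,\ z=\tfrac1i,\ i<y<i+1\}$. Each $S_i$ is amenable (a union of two basic sets), the family $\{S_i\}_{i\ge1}$ is locally finite, and $\ol{S_i}^{\zar}=\R^3$ for every $i$, so the family of closures is trivially locally finite after eliminating repetitions. Yet in the obvious basic decomposition one has $\ol{\ell_i}^{\zar}=\{x=0,\ z=\tfrac1i\}$, and these lines are not locally finite at the origin. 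Worse, $S\cap\{x=0\}=\bigcup_{i\ge1}\ell_i$ is one--dimensional while its Zariski closure is the plane $\{x=0\}$, so $S\cap\{x=0\}$ is not amenable and hence neither is $S$; the corollary as stated appears to fail on this example. The paper's own proof makes the identical unjustified leap (``\dots we conclude that the family $\{\ol{S_{ij}}^{\zar}\}_{i,j\geq1}$ is also locally finite\dots''), so your write-up faithfully reproduces both the strategy and its defect.
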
 
\begin{proof}
For each $i\geq1$ there exists by Proposition \ref{normal2} a countable family $\{S_{ij}\}_{j\geq1}$ of basic $C$-semianalytic sets such that $S_i=\bigcup_{j\geq1}S_{ij}$ and the family $\{\ol{S_{ij}}^{\zar}\}_{j\geq1}$ is locally finite (after eliminating repetitions). Notice that $\{S_{ij}\}_{i,j\geq1}$ is a countable family of basic $C$-semianalytic sets. As $\ol{S_{ij}}^{\zar}\subset\ol{S_i}^{\zar}$ for each $i,j\geq1$ and the families $\{\ol{S_i}^{\zar}\}_{i\geq1}$ and $\{\ol{S_{ij}}^{\zar}\}_{j\geq1}$ are locally finite (after eliminating repetitions), we conclude that the family $\{\ol{S_{ij}}^{\zar}\}_{i,j\geq1}$ is also locally finite (after eliminating repetitions). By Proposition \ref{normal2} we conclude that $S=\bigcup_{i\geq1}S_i=\bigcup_{i,j\geq1}S_{ij}$ is amenable, as required.
\end{proof}

\subsection{Images of amenable $C$-semianalytic sets under proper holomorphic maps}\setcounter{paragraph}{0} 
Before proving Theorem \ref{properint-tame} we need some preliminary work. Let $(X,\an_X)$ be a reduced Stein space endowed with an anti-involution $\sigma$ such that its fixed part space $X^\sigma$ is non-empty. Let ${\mathcal A}(X)\subset\an(X)$ be the subring of all invariant holomorphic sections of $\an(X)$ and 
$$
{\mathcal A}(X^\sigma):=\{F|_{X^\sigma}:\ F\in{\mathcal A}(X)\}\subset\an(X^\sigma). 
$$
A $C$-semianalytic set $S\subset X^\sigma$ is \em ${\mathcal A}(X^\sigma)$-definable \em if if for each $x\in X^\sigma$ there exists an open neighborhood $U^x$ such that $S\cap U^x$ is a finite union of sets of the type $\{f=0,g_1>0,\ldots,g_r>0\}$ where $f,g_i\in{\mathcal A}(X^\sigma)$.

\begin{lem}\label{zarx}
Let $Y$ be a $C$-analytic subset of $X^\sigma$ and let $Z$ be the Zariski closure of $Y$ in $X$. Then $\dim_\C(Z)=\dim_\R(Y)$, $\Sing(Y)\subset\Sing(Z)$ and $Y\setminus\Sing(Z)$ is a union of connected components of $X^\sigma\setminus Z$.
\end{lem}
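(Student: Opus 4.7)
The plan is to work locally at each point $y\in Y$ and exploit the global-to-local principle provided by Cartan's Theorem~A in order to identify $Z_y$ with a local complexification of $Y_y$. Once this identification is in place, all three assertions reduce to classical facts about fixed loci of antiholomorphic involutions on complex manifolds. The main technical obstacle is precisely this identification, as it is the only ingredient that converts global hypotheses on the Zariski closure $Z$ into pointwise geometric information about $Y$; everything else will follow by elementary dimension counting.

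First I would check that $Z$ is $\sigma$-invariant: if $F\in\ideal(Y):=\{F\in\an(X):F|_Y=0\}$, then $\ol{F\circ\sigma}$ is again holomorphic (since $\sigma$ is antiholomorphic) and still vanishes on $Y=\sigma(Y)$, so $\ideal(Y)$ is stable under $F\mapsto\ol{F\circ\sigma}$, and hence $\sigma(Z)=Z$. Because $X$ is Stein, Cartan's Theorem~A ensures that the coherent ideal sheaf cutting out $Z$ is generated by $\ideal(Y)$, so $Z_y$ is the smallest complex analytic germ at $y$ containing $Y_y$. On the other hand, $Y_y$ admits by~\ref{complexification} a complexification $\widetilde{Y}_y$ with $\dim_\C(\widetilde{Y}_y)=\dim_\R(Y_y)$, which is precisely the smallest such germ; hence $Z_y=\widetilde{Y}_y$ and $\dim_\C(Z_y)=\dim_\R(Y_y)$ at every $y\in Y$. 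Passing to the supremum gives $\dim_\C(Z)=\dim_\R(Y)$.

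For $\Sing(Y)\subset\Sing(Z)$ I would argue the contrapositive. Take $y\in Y\cap\Reg(Z)$ and set $d:=\dim_\C(Z_y)$. Near $y$ the set $Z$ is a complex submanifold of $X$, and $\sigma|_Z$ is an antiholomorphic involution of this complex manifold; a standard linearization in $\sigma$-invariant local coordinates shows that $Z^\sigma=X^\sigma\cap Z$ is a real analytic submanifold of $X$ of real dimension $d$ near $y$. By the previous step $\dim_\R(Y_y)=d$, and $Y_y\subset Z^\sigma_y$ is a closed real-analytic subset of full dimension inside a connected real manifold germ, so $Y_y=Z^\sigma_y$. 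In particular $Y$ is a real manifold near $y$, i.e.\ $y\in\Reg(Y)$.

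For the third assertion, which I read as: $Y\setminus\Sing(Z)$ is a union of connected components of $X^\sigma\cap(Z\setminus\Sing(Z))=Z^\sigma\cap\Reg(Z)$, it suffices to show that $Y\cap\Reg(Z)$ is both closed and open inside $Z^\sigma\cap\Reg(Z)$. Closedness is automatic because $Y$ is closed in $X^\sigma$ (every $C$-analytic set is closed). Openness is a direct byproduct of the previous paragraph: at each $y\in Y\cap\Reg(Z)$ we already proved $Y=Z^\sigma$ locally, so a whole neighborhood of $y$ in $Z^\sigma\cap\Reg(Z)$ is contained in $Y\cap\Reg(Z)$.
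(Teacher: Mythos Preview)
Your argument is correct in outline, but it takes a genuinely different route from the paper and contains one step that is asserted too quickly.

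\textbf{Comparison of approaches.} The paper's proof is a three-line global dimension argument: it never identifies $Z_y$ with the local complexification $\widetilde{Y}_y$, but simply observes that if $\dim_\R(Y)<\dim_\C(Z)$ then $Y$ would lie in the proper analytic subset $\Sing(Z)$, contradicting $Z=\ol{Y}^{\zar}$; the inclusion $\Sing(Y)\subset\Sing(Z)$ is declared ``clear'', and the third assertion is obtained by noting that $Y\setminus\Sing(Z)$ is a closed real analytic subset of the real manifold $(X^\sigma\cap Z)\setminus\Sing(Z)$ of the same dimension $d$, hence open and closed. Your approach instead works pointwise: you aim to identify $Z_y=\widetilde{Y}_y$ at every $y\in Y$, and then deduce all three claims from standard facts about fixed loci of anti-involutions on complex manifolds. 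Your route gives finer information (the pointwise dimension equality $\dim_\C(Z_y)=\dim_\R(Y_y)$) and makes the second and third assertions completely transparent; the paper's route is shorter and avoids any local-to-global comparison. You also correctly read the evident typo in the third assertion as ``$(X^\sigma\cap Z)\setminus\Sing(Z)$''.

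\textbf{The gap.} Your sentence ``Cartan's Theorem~A ensures that the coherent ideal sheaf cutting out $Z$ is generated by $\ideal(Y)$, so $Z_y$ is the smallest complex analytic germ at $y$ containing $Y_y$'' does not follow as written. Cartan~A gives $\mathcal{I}_{Z,y}=\ideal(Y)\cdot\an_{X,y}$, i.e.\ the stalk of the ideal sheaf of $Z$ is generated by \emph{global} holomorphic functions vanishing on $Y$. To conclude that $Z_y$ is the \emph{smallest} complex germ containing $Y_y$ you need the reverse inclusion: every \emph{local} $g\in\an_{X,y}$ with $g|_{Y_y}=0$ lies in $\sqrt{\ideal(Y)\cdot\an_{X,y}}$. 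This is precisely the statement that the germ of the global Zariski closure coincides with the local complexification, and it is not a formal consequence of Cartan~A. It \emph{is} true here---because $X$ is a Stein complexification of $X^\sigma$ and $Y$ is $C$-analytic, so the well-reduced ideal sheaf $\ideal(Y,X^\sigma)\an_{X^\sigma}$ is coherent and complexifies to the ideal sheaf of a closed complex analytic subset of an invariant Stein neighbourhood of $X^\sigma$ (cf.\ \S\ref{complexification}, \S\ref{wrs})---but this is the substantive step and should be stated rather than absorbed into ``so''. Once this identification is in place, the rest of your proof is correct and more explicit than the paper's.
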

\begin{proof}
If $\dim_\R(Y)<\dim_\C(Z)$, it holds $Y\subset\Sing(Z)\subsetneq Z$, which is a contradiction. Thus, $d:=\dim_\C(Z)=\dim_\R(Y)$. The inclusion $\Sing(Y)\subset\Sing(Z)$ is clear, so $Y\setminus\Sing(Z)$ is a real analytic manifold of dimension $d$. In addition, $X^\sigma\setminus Z$ is a real analytic manifold of dimension $d$ and $Y$ is a $C$-analytic subset of $X^\sigma\cap Z$. Thus, $Y\setminus\Sing(Z)$ is an open and closed subset of $X^\sigma\setminus Z$, so $Y\setminus\Sing(Z)$ is a union of connected components of $X^\sigma\setminus Z$, as required. 
\end{proof}

We develop next a useful presentation of ${\mathcal A}(X^\sigma)$-definable and amenable $C$-semianalytic sets.
\begin{lem}\label{pd}
Let $S\subset X^\sigma$ be an ${\mathcal A}(X^\sigma)$-definable and amenable $C$-semianalytic set. Then there exist finitely many invariant complex analytic subsets $Z_1,\ldots,Z_r$ of $X$ and finitely many open ${\mathcal A}(X^\sigma)$-definable $C$-semianalytic subsets $U_1,\ldots,U_r$ of $X^\sigma$ such that $S=\bigsqcup_{i=1}^r(Z_i\cap U_i)$, each $Z_i\cap U_i$ is a real analytic manifold and $\dim(Z_{i+1}\cap U_{i+1})<\dim(Z_i\cap U_i)$ for $i=1,\ldots,r-1$.
\end{lem}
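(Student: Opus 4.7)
The plan is an induction on $d:=\dim(S)$ that mirrors the proof of Lemma~\ref{decomp} while systematically preserving $\mathcal{A}(X^\sigma)$-definability at every step. For the base case $d=0$, the set $S$ is locally finite, its invariant complex analytic Zariski closure $Z_1$ in $X$ is $0$-dimensional by Lemma~\ref{zarx}, and an open $\mathcal{A}(X^\sigma)$-definable $C$-semianalytic set $U_1$ separating the points of $S$ from those of $(Z_1\cap X^\sigma)\setminus S$ can be built from sums of squares of invariant holomorphic functions in $\mathcal{A}(X)$ that vanish on the discrete set to be excised.

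For the inductive step with $d\geq 1$, I would peel off the top-dimensional smooth stratum first. Let $Y\subset X^\sigma$ be the $C$-analytic Zariski closure of $S$ (of dimension $d$ by amenability) and let $Z_1$ be its invariant complex analytic Zariski closure in $X$; by Lemma~\ref{zarx}, $\dim_\C Z_1=d$, $\Sing(Y)\subset\Sing(Z_1)$, and $M_1:=Y\setminus\Sing(Z_1)$ is a union of connected components of $X^\sigma\setminus\Sing(Z_1)$, hence a $d$-dimensional real analytic submanifold of $X^\sigma$. Let $T$ be the union of all open subsets of $M_1$ contained in $S$; being a union of relatively open pieces of $M_1$, it is a $d$-dimensional real analytic manifold. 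Starting from a locally finite presentation of $S$ as a countable union of basic $\mathcal{A}(X^\sigma)$-definable pieces (Proposition~\ref{normal2}), one checks that $T$ is itself $\mathcal{A}(X^\sigma)$-definable and $C$-semianalytic. An $\mathcal{A}(X^\sigma)$-definable adaptation of Lemma~\ref{openX} then produces an open $\mathcal{A}(X^\sigma)$-definable $C$-semianalytic set $U_1\subset X^\sigma\setminus\Sing(Z_1)$ with $T=Z_1\cap U_1$.

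Set $S':=S\setminus(Z_1\cap U_1)$. Inside $M_1$ the points of $S\setminus T$ form the frontier of $T$ in $M_1$, which has real dimension strictly less than $d$; outside $M_1$ the set $S\setminus T$ is contained in $\Sing(Z_1)\cap X^\sigma$, whose real dimension is at most $\dim_\C\Sing(Z_1)<d$ by Lemma~\ref{zarx} applied to the invariant complex Zariski closure of $\Sing(Z_1)\cap X^\sigma$. Hence $\dim(S')<d$. The set $S'$ is $\mathcal{A}(X^\sigma)$-definable by construction and amenable: its basic pieces have Zariski closures contained in the locally finite family obtained by combining the Zariski closures of the basic pieces of $S$ with $\Sing(Z_1)$, so Proposition~\ref{normal2} together with Corollary~\ref{lfuok2} apply. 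The induction hypothesis applied to $S'$ then yields the remaining pairs $(Z_i,U_i)$ with strictly decreasing dimensions, and concatenating produces the required disjoint stratification.

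The main obstacle is the $\mathcal{A}(X^\sigma)$-definable analogue of Lemma~\ref{openX}: the original proof invokes \cite[3.1]{abs} locally and then globalises, whereas here every local step must use invariant holomorphic sections of $\an_X$. The required invariance is available because $Z_1$ is invariant and $X^\sigma$ admits a fundamental system of invariant open Stein neighborhoods in $X$ (see (iii) in~\ref{complexification}), so one can re-run the argument of \cite[3.1]{abs} inside invariant Stein neighborhoods and patch with invariant analytic partitions of unity. A secondary technical point is to guarantee that the strata $Z_i\cap U_i$ remain pairwise disjoint across inductive steps, which is arranged by iteratively shrinking each $U_i$ (keeping it $\mathcal{A}(X^\sigma)$-definable and open) so as to avoid the closed $\mathcal{A}(X^\sigma)$-definable sets $\bigcup_{j<i}(Z_j\cap U_j)$.
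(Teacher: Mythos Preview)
Your induction scheme is the right skeleton, but two load-bearing steps are not justified, and the paper's proof is organised precisely to avoid them.

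\textbf{The lower piece need not be the set you describe.} You define $S':=S\setminus T$ with $T=\Int_{M_1}(S\cap M_1)$ and then assert that $S'$ is amenable and $\mathcal{A}(X^\sigma)$-definable because ``its basic pieces have Zariski closures contained in'' a certain locally finite family. But $S'$ is not given as a union of basic pieces: it is produced by a closure/boundary operation, and closures of amenable sets need not be amenable (Example~\ref{ex}). More seriously, your $M_1=Y\setminus\Sing(Z_1)$ can have mixed dimension when the real Zariski closure $Y$ has irreducible components of dimension $<d$: a generic point of such a component is regular in $Z_1$, so it survives in $M_1$, and then $T$ is no longer a $d$-manifold. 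The paper handles both issues at once by first splitting $Y=Y'\cup Y''$ into the pure $d$-dimensional part and the rest, taking $Z',Z''$ their invariant complex Zariski closures, and then \emph{proving} (equation~\eqref{squnion}) that every point of $S$ outside the invariant complex analytic set $\Sing(Z')\cup Z''$ already lies in the interior piece. Thus the lower stratum is exactly $S\cap(\Sing(Z')\cup Z'')$, whose amenability and $\mathcal{A}(X^\sigma)$-definability are immediate, and no frontier residue inside $M_1$ ever appears. That claim uses the finite amenable presentation $S=\bigcup_j(Y_j\cap V_j)$ in an essential way: each $Y_j\setminus C$ is a $d$-manifold sitting inside the $d$-manifold $(X^\sigma\cap Z')\setminus C$, hence open there.

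\textbf{You do not need an $\mathcal{A}(X^\sigma)$-analogue of Lemma~\ref{openX}.} You flag this as the main obstacle and propose to rerun \cite[3.1]{abs} in invariant Stein neighbourhoods with invariant partitions of unity; this is speculative. The paper sidesteps it completely. By Lemma~\ref{zarx}, $Y'\setminus(\Sing(Z')\cup Z'')$ is a union of connected components of the $\mathcal{A}(X^\sigma)$-definable manifold $(X^\sigma\cap Z)\setminus(\Sing(Z')\cup Z'')$; by \cite[Prop.~3.5]{abf1} such a union is $\mathcal{A}(X^\sigma)$-definable, and the same reference guarantees that closures of $\mathcal{A}(X^\sigma)$-definable sets are again $\mathcal{A}(X^\sigma)$-definable. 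Hence $U_1:=X^\sigma\setminus\cl\bigl(X^\sigma\setminus(S\setminus C)\bigr)$ is open and $\mathcal{A}(X^\sigma)$-definable with $Z'\cap U_1$ equal to the interior piece, no separation lemma required. The disjointness of the strata then comes for free from \eqref{squnion}, so your ``secondary technical point'' about shrinking the $U_i$ also disappears.
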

\begin{proof}
We proceed by induction on the dimension $d$ of $S$. If $S$ has dimension $0$, the result is clearly true. Assume the result true if the dimension of $S$ is $<d$ and let us check that it is also true if $S$ has dimension $d$.
 
\paragraph{}As $S$ is an amenable $C$-semianalytic set, there exist $C$-analytic sets $Y_1,\ldots,Y_s$ and open $C$-semianalytic sets $V_1,\ldots,V_s$ such that $S=\bigcup_{i=1}^s(Y_i\cap V_i)$. Denote $Y:=\bigcup_{i=1}^sY_i$, which has dimension $d$. Let $Y'$ be the union of the irreducible components of $Y$ of dimension $d$ and let $Y''$ the union of those of smaller dimension. Let $Z'$ be the Zariski closure of $Y'$ in $X$ and $Z''$ the Zariski closure of $Y''$ in $X$. Then $Z=Z'\cup Z''$ is the Zariski closure of $Y$ in $X$.

By Lemma \ref{zarx} the $C$-semianalytic set $Y'\setminus(\Sing(Z')\cup Z'')$ is a union of connected components of $(X^\sigma\cap Z)\setminus((\Sing(Z')\cup Z''))$. By \cite[Prop.3.5]{abf1} $Y'\setminus(\Sing(Z')\cup Z'')$ is a ${\mathcal A}(X^\sigma)$-definable $C$-semianalytic subset of $X^\sigma$. As $Y'\setminus(\Sing(Z')\cup Z'')$ is an open $C$-semianalytic subset of $X^\sigma\cap Z$,
$$
C:=(X^\sigma\cap Z)\setminus(Y'\setminus(\Sing(Z')\cup Z''))
$$
is a closed ${\mathcal A}(X^\sigma)$-definable $C$-semianalytic subset of $X^\sigma$.

\paragraph{}We claim:
\begin{equation}\label{squnion}
S=\Int_{X^\sigma\cap Z'}(S\setminus C)\sqcup(S\cap(\Sing(Z')\cup Z'')).
\end{equation}
We only need to prove the inclusion from left to right. Let $x\in S\setminus(\Sing(Z')\cup Z'')$. We have to check that $x\in\Int_{X^\sigma\cap Z'}(S\setminus C)$.

Assume $x\in Y_1\cap V_1$. As $\dim(Y_1)=d$ and $\Sing(Y_1)\subset C$, the diference $Y_1\setminus C$ is a real analytic manifold of dimension $d$ contained in $(X^\sigma\cap Z')\setminus C$, so it is an open subset of $X^\sigma\cap Z'$. As
$$
S\setminus(\Sing(Z')\cup Z'')\subset Y'\setminus(\Sing(Z')\cup Z'')=(X^\sigma\cap Z')\setminus C,
$$
we deduce $x\in (Y_1\setminus C)\cap V_1\subset\Int_{X^\sigma\cap Z'}(S\setminus C)$.

\paragraph{}Define $Z_1=Z'$ and $U_1=X^\sigma\setminus\cl(X^\sigma\setminus(S\setminus C))$. It holds that $Z_1$ is an invariant complex analytic subset of $X$ and $U_1$ is an open ${\mathcal A}(X^\sigma)$-definable $C$-semianalytic set by \cite[Prop. 3.5]{abf1}. We have 
$$
Z_1\cap U_1=(X^\sigma\cap Z')\setminus\cl(X^\sigma\setminus(S\setminus C))=\Int_{X^\sigma\cap Z'}(S\setminus C),
$$
which is a real analytic manifold.

\paragraph{}As $S\cap(\Sing(Z')\cup Z'')$ is an ${\mathcal A}(X^\sigma)$-definable and amenable $C$-semianalytic set of dimension $<d$, there exist by induction hypothesis finitely many invariant complex analytic subsets $Z_2,\ldots,Z_r$ of $X$ and finitely many open ${\mathcal A}(X^\sigma)$-definable $C$-semianalytic subsets $U_2,\ldots,U_r$ of $X^\sigma$ such that $Z_i\cap U_i$ is a real analytic manifold, $\dim(Z_{i+1}\cap U_{i+1})<\dim(Z_i\cap U_i)$ for $i=2,\ldots,r-1$ and
$$
S\cap(\Sing(Z')\cup Z'')=\bigsqcup_{i=2}^r(Z_i\cap U_i).
$$ 
Observe that $\dim(Z_1\cap U_1)=d>\dim(S\cap(\Sing(Z')\cup Z''))=\dim(Z_2\cap U_2)$. By equation \eqref{squnion} $S=\bigsqcup_{i=1}^r(Z_i\cap U_i)$, as required.
\end{proof}

\begin{lem}\label{strata}
Let $F:(X,\an_X)\to(Y,\an_Y)$ be a proper surjective map between reduced irreducible Stein spaces of the same dimension $d$. Then, there exist complex analytic subsets $X'\subset X$ and $Y'\subset Y$ of dimension $<d$ such that:
\begin{itemize}
\item[(i)] $F^{-1}(Y')=X'$,
\item[(ii)]$M:=X\setminus X'$ and $N:=Y\setminus Y'$ are complex analytic manifolds respectively dense in $X$ and $Y$,
\item[(iii)] $F|_M:M\to N$ is an open proper surjective holomorphic map of constant rank $d$.
\end{itemize}
\end{lem}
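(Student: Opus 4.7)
The plan is to assemble the ``bad locus'' in $X$ from three pieces (the singular set of $X$, the locus of rank drop, and the preimage of $\Sing(Y)$), project it down to $Y$ by Remmert's proper mapping theorem, and then define $Y'$ to be that projection together with $\Sing(Y)$, and $X'$ to be $F^{-1}(Y')$ so that condition (i) holds by fiat. Concretely, I would set $\Sigma_X:=\Sing(X)$ and $\Sigma_Y:=\Sing(Y)$, which are analytic of dimension $<d$. On the complex manifold $X\setminus\Sigma_X$ the locus $R$ where $F$ fails to have rank $d$ is an analytic subset, and it is a proper analytic subset there because $F$ is surjective onto the $d$-dimensional space $Y$; Remmert--Stein then gives an analytic closure $\overline{R}\subset X$ of dimension $<d$. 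Put
$$
A:=\Sigma_X\cup\overline{R}\cup F^{-1}(\Sigma_Y),
$$
each summand being an analytic subset of $X$ of dimension $<d$; the bound on $F^{-1}(\Sigma_Y)$ uses that it is a proper analytic subset of the irreducible space $X$ of dimension $d$.

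Next I would define
$$
Y':=\Sigma_Y\cup F(A),\qquad X':=F^{-1}(Y').
$$
Remmert's proper mapping theorem gives that $F(A)$ is an analytic subset of $Y$, of dimension $\le\dim A<d$, so $Y'$ is an analytic subset of $Y$ with $\dim Y'<d$. Because $Y$ is irreducible and $Y'\subsetneq Y$, the surjectivity of $F$ and irreducibility of $X$ force $X'\subsetneq X$, hence $\dim X'<d$. Condition (ii) follows since $M:=X\setminus X'$ misses $\Sigma_X$ (so $M$ is a complex manifold) and $N:=Y\setminus Y'$ misses $\Sigma_Y$ (so $N$ is a complex manifold); density of $M$ in $X$ and of $N$ in $Y$ is the irreducibility of $X$ and $Y$ together with the fact that $X'$ and $Y'$ are proper analytic subsets.

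For (iii), the identity $X'=F^{-1}(Y')$ gives $F^{-1}(N)=M$, so for any compact $K\subset N$ one has $(F|_M)^{-1}(K)=F^{-1}(K)$, which is compact by properness of $F$; hence $F|_M$ is proper. Surjectivity is immediate: for $y\in N$, every $x\in F^{-1}(y)$ lies in $M$ since otherwise $F(x)=y\in Y'$. Finally, at any $x\in M$ we have $x\notin\overline{R}$ and $F(x)\notin\Sigma_Y$, so $X$ is a $d$-manifold near $x$, $Y$ is a $d$-manifold near $F(x)$, and $F$ has rank $d$ at $x$; the holomorphic inverse function theorem makes $F|_M$ locally biholomorphic at $x$, hence open and of constant rank $d$.

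The step I expect to be the main obstacle is controlling the dimension of $X'$: one must use Remmert's theorem to keep $F(A)$ analytic, combined with the irreducibility of both $X$ and $Y$, so that neither $Y'$ exhausts $Y$ nor $F^{-1}(Y')$ exhausts $X$. Remmert--Stein enters only to extend the rank-drop locus across $\Sigma_X$, which is a routine but necessary technical step.
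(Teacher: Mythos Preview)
Your approach is essentially the same as the paper's: assemble the bad locus in $X$, push it to $Y$ via Remmert's proper mapping theorem, adjoin $\Sing(Y)$, and pull back to define $X'$ so that (i) holds automatically. Two remarks are worth making.

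First, your appeal to Remmert--Stein to extend the rank-drop locus $R$ across $\Sigma_X=\Sing(X)$ is not quite right: Remmert--Stein requires each component of $R$ to have dimension strictly larger than $\dim\Sigma_X$, but here both $R$ and $\Sigma_X$ can have dimension up to $d-1$, so the hypothesis may fail. What you actually need is that $\Sigma_X\cup R$ is itself an analytic subset of $X$ of dimension $<d$; this is a standard fact (the paper cites \cite[L.~Thm.~4, p.~136]{gu}), and once you have it your set $A=\Sigma_X\cup\overline{R}\cup F^{-1}(\Sigma_Y)$ is analytic since $\Sigma_X\cup\overline{R}=\Sigma_X\cup R$. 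So the conclusion stands, only the justification needs adjustment.

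Second, your bound $\dim X'<d$ via irreducibility of $X$ (proper analytic subset of an irreducible space) is perfectly valid but differs from the paper, which instead observes that compact analytic subsets of a Stein space are finite, so $F$ has finite fibers and hence $\dim F^{-1}(Y')=\dim Y'<d$ directly. Both arguments work; yours avoids the finite-fibers observation, the paper's avoids needing irreducibility of $X$ at that particular step. Your inclusion of $F^{-1}(\Sigma_Y)$ in $A$ is harmless but redundant, since $\Sigma_Y\subset Y'$ regardless.
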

\begin{proof}
Recall that compact analytic subsets of a Stein space are finite sets, so the fibers of $F$ are finite. By \cite[L.Theorem 4, pag. 136]{gu} the set 
$$
X_0:=\{z\in\Reg(X):\ {\rm rk}_z(F)\leq d-1\}\cup\Sing(X)
$$
is a complex analytic subset of $X$ of dimension $<d$. 

The singular set $\Sing(Y)$ has dimension $<d$. As $F$ is proper, $F(X_0)$ is by Remmert's Theorem a complex analytic subset of $Y$ of dimension $<d$. Let $Y':=\Sing(Y)\cup F(X_0)$ and $X':=F^{-1}(Y')$, which is a complex analytic subset of $X$ of dimension $<d$ because $F$ has finite fibers. Let $M:=X\setminus X_1$ and $N:=Y\setminus Y_1$, which are complex analytic manifolds respectively dense in $X$ and $Y$. As $M=F^{-1}(N)$, the map $F|_M:M\to N$ is proper and surjective. In addition, $F|_M$ has constant rank $d$, so by the rank theorem $F|_M$ is open, as required.
\end{proof}

We prove next Theorem \ref{properint-tame}.\setcounter{paragraph}{0}

\begin{proof}[Proof of Theorem \em \ref{properint-tame}]
(i) By Lemma \ref{pd} $S=\bigcup_{i=1}^rZ_i\cap V_i$ where 
\begin{itemize}
\item $Z_i$ is an invariant complex analytic subset of $X$ of (complex) dimension $d_i$ and $Z_i\cap X^\sigma$ has real dimension $d_i$,
\item $V_i$ is an open ${\mathcal A}(X^\sigma)$-definable $C$-semianalytic subset of $X^\sigma$.
\end{itemize}
As $F(S)=\bigcup_{i=1}^rF(Z_i\cap V_i)$, it is enough to prove that $F(Z_i\cap V_i)$ is an amenable $C$-semianalytic set, so we assume $S=Z_1\cap V_1$. To soften notation write $Z:=Z_1$ and $V:=V_1$. We proceed by induction on the dimension of $S$. If $S$ has dimension $0$, it is a discrete subset of $X^\sigma$ and as $F$ is proper also $F(S)$ is a discrete subset of $Y^\tau$, so it is amenable. Assume the result true if the dimension of $S$ is $<d$ and let us check that it is also true if $S$ has dimension $d$. 

Let $\{Z_\alpha\}_\alpha$ be the locally finite family of the irreducible components of $Z$. By Remmert's Theorem and Lemma \ref{proylc} $\{Y_\alpha:=F(Z_\alpha)\}_\alpha$ is a locally finite family of irreducible complex analytic subsets of $Y$. As $F(S)=\bigcup_\alpha F(V\cap Z_\alpha)$ and the family $\{Y_\alpha\}_\alpha$ is locally finite, it is enough to show by Corollary \ref{lfuok2} that $F(V\cap Z_\alpha)$ is an amenable $C$-semianalytic set. Thus, we assume $X,Y$ are irreducible, they have complex dimension $d$, $F$ is surjective and $S=V$.

By Lemma \ref{strata} there exist complex analytic subsets $X'\subset X$ and $Y'\subset Y$ of dimension $<d$ such that:
\begin{itemize}
\item $F^{-1}(Y')=X'$,
\item $M:=X\setminus X'$ and $N:=Y\setminus Y'$ are invariant complex analytic manifolds respectively dense in $X$ and $Y$,
\item $F|_M:M\to N$ is a proper open surjective holomorphic map of constant rank equal to $d$.
\end{itemize}
By induction hypothesis $F(V\cap X')$ is an amenable $C$-semianalytic set, so it is enough to prove that $F(V\cap M)$ is an amenable $C$-semianalytic set. By \cite[Thm. 1.1]{abf1} $F(V\cap M)$ is a $C$-semianalytic set. Denote $M^\sigma:=M\cap X^\sigma$ and $N^\tau:=N\cap Y^\tau$. As $F$ is invariant, $f:=F|_{M^\sigma}:M^\sigma\to N^\tau$. As ${\rm rk}_z(F)=d$ for all $z\in M$, it holds ${\rm rk}_x(f)=d$ for all $x\in M^\sigma$. As $\dim_\R(M^\sigma)=\dim_\R(N^\tau)=d$, we deduce by the rank theorem that $f$ is open, so $f(V\cap M)$ is an open $C$-semianalytic subset of $Y^\tau$, as required.

(ii) After shrinking $Y$ if necessary, we write $S'=\bigcup_{i=1}^rZ_i\cap V_i$ where $V_i$ is an open $C$-semianalytic set and $Z_i$ is an invariant complex analytic subset of $Y$. As $Y$ is Stein and $F|_{F^{-1}(Y)}:F^{-1}(Y)\to Y$ is proper, also $F^{-1}(Y)$ is by \cite[M.Thm.3, pag. 142]{gu3} Stein. We substitute $X$ by $F^{-1}(Y)$.

Write $Z_i':=F^{-1}(Z_i)$ and $W_i:=F^{-1}(V_i)\cap X^\sigma$. Observe that $Z_i'$ is an invariant complex analytic subset of $X$ and $W_i$ is an open $C$-semianalytic subset of $X^\sigma$. As $T$ is a union of connected components of $F^{-1}(S)=\bigcup_{i=1}^rZ_i'\cap W_i$, the intersection $T\cap Z_i'\cap W_i$ is a union of connected components of $Z_i'\cap W_i$. Thus, we may assume: 
\begin{itemize}
\item $S'=Z\cap V$ where $Z$ is an invariant complex analytic set and $V$ is an open $C$-semianalytic subset of $Y^\tau$.
\item $T$ is a union of connected components of $F^{-1}(S)\cap X^\sigma=Z'\cap W$ where $Z':=F^{-1}(Z)$ is an invariant complex analytic set and $W:=F^{-1}(V)\cap X^\sigma$ is an open $C$-semianalytic subset of $X^\sigma$. 
\end{itemize}

Let $\{Z'_\alpha\}_\alpha$ be the locally finite family of the irreducible components of $Z'$. By Remmert's Theorem and Lemma \ref{proylc} $\{Y_\alpha:=F(Z_\alpha')\}_\alpha$ is a locally finite family of irreducible complex analytic subsets of $Y$. As $T$ is a union of connected components of $Z'\cap W$, it holds that $T\cap Z'_\alpha$ is a union of connected components of $Z'_\alpha\cap W$. As
$$
F(T)=\bigcup_\alpha F(T\cap Z'_\alpha)
$$
and the family $\{Y_\alpha\}_\alpha$ is locally finite, it is enough to show by Corollary \ref{lfuok2} that $F(T\cap Z'_\alpha)$ is an amenable $C$-semianalytic set. Thus, we may assume that $X,Y$ are irreducible, they have complex dimension $d$ and $F$ is surjective. We have to prove that if $V$ is an open $C$-semianalytic subset of $Y^\tau$ and $T$ is a union of connected components of $F^{-1}(V)\cap X^\sigma$, then $F(T)$ is an amenable $C$-semianalytic subset of $Y^\tau$. Denote $W:=F^{-1}(V)$.

We proceed by induction on the dimension of $X$. If $X$ has dimension $0$, then $T$ is a discrete set and as $F$ is proper, $F(T)$ is also a discrete set, so it is amenable. Assume the result for dimension $<d$ and let us check that it is also true for dimension $d$. By Lemma \ref{strata} there exist complex analytic subsets $X'\subset X$ and $Y'\subset Y$ of dimension $<d$ such that:
\begin{itemize}
\item $F^{-1}(Y')=X'$,
\item $M:=X\setminus X'$ and $N:=Y\setminus Y'$ are invariant complex analytic manifolds respectively dense in $X$ and $Y$,
\item $F|_M:M\to N$ is a proper open surjective holomorphic map of constant rank equal to $d$.
\end{itemize}

As $T\cap X'$ is a union of connected components of $W\cap X'$ and $\dim(X')<d$, by induction hypothesis $F(T\cap X')$ is an amenable $C$-semianalytic set, so it is enough to prove that $F(T\cap M)$ is an amenable $C$-semianalytic set. Denote $M^\sigma:=M\cap X^\sigma$ and $N^\tau:=N\cap Y^\tau$. As $F$ is invariant, $f:=F|_{M^\sigma}:M^\sigma\to N^\tau$ and, as we have commented above, $f$ is open. As $T\cap M$ is an open subset of $M^\sigma$, we conclude that $F(T\cap M)=f(T\cap M)$ is an open subset of $Y^\tau$. It only remains to show that $F(T\cap M)$ is a $C$-semianalytic subset of $Y^\tau$.

By Lemma \ref{odes} $V=\bigcup_{j\geq1}V_j$ where $\{V_j\}_{j\geq1}$ is a locally finite family of open basic $C$-semianalytic set. Fix $j\geq1$ and observe that $T\cap F^{-1}(V_j\setminus Y')$ is a union of connected components of $F^{-1}(V_j\setminus Y')\cap X^\sigma$. Let $Y_j\subset Y$ be an invariant Stein open neighborhood of $Y^\tau$ such that $V_j$ is ${\mathcal A}(Y^\tau_j)$-definable. Then, $F^{-1}(V_j\setminus Y')\cap X^\sigma$ is ${\mathcal A}(X^\sigma_j)$-definable, where $X_j:=F^{-1}(Y_j)$ is by \cite[M. Thm. 3, pag. 141]{gu3} a Stein space. In addition the map $F_j:=F|_{X_j}:X_j\to Y_j$ is proper and surjective. As $T\cap F_j^{-1}(V_j\setminus Y')$ is a union of connected components of $F^{-1}(V_j\setminus Y')\cap X^\sigma$, we deduce by \cite[Prop. 3.5]{abf1} that $T\cap F_j^{-1}(V_j\setminus Y')$ is ${\mathcal A}(X^\sigma_j)$-definable. By \cite[Thm. 1.1]{abf1} 
$$
F_j(T\cap F_j^{-1}(V_j\setminus Y'))=F_j(T\cap M\cap F_j^{-1}(V_j))=F_j(T\cap M)\cap V_j
$$ 
is a $C$-semianalytic subset of $Y^\tau$. Thus, the locally finite union of $C$-semianalytic subsets of $Y^\tau$
$$
F(T\cap M)=\bigcup_{j\geq1}F_j(T\cap M)\cap V_j
$$
is a $C$-semianalytic subset of $Y^\tau$, as required.
\end{proof}

\begin{example}[Tameness is not preserved by proper analytic maps with finite fibers]
Let $M:=\bigsqcup_{k\geq1}\sph_k$ where $\sph_k:=\{(x,y,z)\in\R^3:\ (x-k+\frac{1}{2})^2+y^2+(z-2k)^2=4\}$, which is a locally finite union of pairwise disjoint spheres. Let 
$$
S_k:=\{(x,y,z)\in\sph_k:\ k-1\leq x\leq k,\ 0\leq y\leq\tfrac{1}{k}\}.
$$
It holds that $S:=\bigcup_{k\geq1}S_k$ is an amenable $C$-semianalytic set. Let $\pi:\R^3\to\R^2,\ (x,y,z)\mapsto(x,y)$ be the projection onto the first two variables. Observe that $\rho:=\pi|_{M}:M\to\R^2$ is a proper map with finite fibers. However 
$$
\rho(S)=\bigcup_{k\geq1}\{(x,y)\in\R^2:\ k-1\leq x\leq k,\ 0\leq y\leq\tfrac{1}{k}\}
$$
is not amenable as we have seen in Example \ref{ex}.
\end{example}

\subsection{Tameness-algorithm for $C$-semianalytic sets}\label{algorithmtame}

We end this section with an algorithm to determine if a $C$-semianalytic set is amenable. 

\subsubsection{Zariski closure of amenable $C$-semianalytic sets}

The announced algorithm is based on the fact that while the behavior of the Zariski closure of general $C$-semianalytic sets can be wild, the Zariski closure of an amenable $C$-semianalytic set behaves neatly.

\begin{example}[Bad behavior of Zariski closure of $C$-semianalytic sets]\label{ojo}
For $n\geq1$ consider the basic $C$-semianalytic set
$$
S_n:=\{y=nx,n\leq x\leq n+1\}\subset\R^2
$$
The family $\{S_n\}_{n\geq1}$ is locally finite, so $S:=\bigcup_{n\geq1}S_n$ is a $C$-semianalytic set. If $x\in S$ and $U^x$ is a small enough $C$-semianalytic neighborhood of $x$, the Zariski closure $\ol{S\cap U^x}^{\zar}$ is a line. The collection $\{\ol{S}_n^{\zar}\}$ of all these lines is not locally finite at the origin and $\ol{S}^{\zar}=\R^2$.\end{example}

\begin{lem}[Neat behavior of Zariski closures of amenable $C$-semianalytic set]\label{gbzc}
Let $S\subset M$ be an amenable $C$-semianalytic set and for each $x\in M$ let $U^x\subset M$ be any open $C$-semianalytic neighborhood. Then 
$$
\ol{S}^{\zar}=\bigcup_{x\in\cl(S)}\ol{S\cap U^x}^{\zar}.
$$
\end{lem}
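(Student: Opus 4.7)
The plan is to split the equality into two inclusions. The inclusion $\ol{S}^{\zar}\supseteq\bigcup_{x\in\cl(S)}\ol{S\cap U^x}^{\zar}$ is immediate, since $S\cap U^x\subseteq S$ implies $\ol{S\cap U^x}^{\zar}\subseteq\ol{S}^{\zar}$ for every $x$. All the content is in the reverse inclusion.

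To obtain the inclusion $\ol{S}^{\zar}\subseteq\bigcup_{x\in\cl(S)}\ol{S\cap U^x}^{\zar}$, I would first apply Proposition \ref{normal2} to write $S=\bigcup_{i\geq1}S_i$ as a countable locally finite union of basic $C$-semianalytic sets $S_i=\{f_i=0,g_{i,1}>0,\ldots,g_{i,r_i}>0\}$ whose Zariski closures $\{\ol{S_i}^{\zar}\}_{i\geq1}$ form a locally finite family (after eliminating repetitions). Consequently $\ol{S}^{\zar}=\bigcup_{i\geq1}\ol{S_i}^{\zar}$ is a locally finite union of $C$-analytic sets, and its irreducible components are precisely the irreducible components of the various $\ol{S_i}^{\zar}$. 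It is therefore enough to show that every irreducible component $Y$ of every $\ol{S_i}^{\zar}$ is contained in $\ol{S\cap U^x}^{\zar}$ for some $x\in\cl(S)$.

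Fix such a component $Y$ and let $W$ denote the union of the other irreducible components of $\ol{S_i}^{\zar}$. Because $S$ is amenable we have $\dim(S_i)=\dim(\ol{S_i}^{\zar})=\dim(Y)$, and by the defining property of the Zariski closure, $S_i$ cannot be contained in $W$; hence $S_i\cap(Y\setminus W)$ is non-empty. I would then choose a point $x\in S_i$ lying in $Y\setminus W$, in $\Reg(Y)$, and satisfying $g_{i,j}(x)>0$ for every $j$; such a point exists since the bad locus inside $S_i\cap(Y\setminus W)$ is contained in a proper $C$-analytic subset of $Y$, while $S_i\cap(Y\setminus W)$ has full local dimension at $x$. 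Because $Y$ is the unique irreducible component of $\ol{S_i}^{\zar}$ through $x$ and $x\in\Reg(Y)$, a sufficiently small Euclidean neighborhood of $x$ in $Y$ is contained in $S_i$; consequently, given any open $C$-semianalytic neighborhood $U^x$ of $x$, the intersection $S\cap U^x\supseteq S_i\cap U^x$ contains a non-empty Euclidean open subset of $Y$. As $Y$ is irreducible, such an open subset is Zariski dense in $Y$, so $\ol{S\cap U^x}^{\zar}\supseteq Y$, and $x\in S\subseteq\cl(S)$ provides the required point.

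The main obstacle will be justifying that at the chosen point $x$ the set $S_i$ really contains a full Euclidean neighborhood of $x$ in $Y$, which is where the hypothesis of amenability (via the equality $\dim(S_i)=\dim(\ol{S_i}^{\zar})$) becomes essential: without this dimensional matching, $S_i$ could be a thin subset of $\ol{S_i}^{\zar}$ and the local density argument would fail. Once this point is correctly chosen, using regularity of $Y$ at $x$ and the positivity of the $g_{i,j}$ to identify $S_i$ with $\{f_i=0\}$ locally, the irreducibility of $Y$ yields Zariski density of the Euclidean-open piece and finishes the proof.
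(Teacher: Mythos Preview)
Your argument is essentially correct and reaches the conclusion, but it follows a different route from the paper's proof. The paper first invokes Lemma~\ref{decomp} to reduce to the case where $S=X\cap W$ is itself a real analytic manifold with $X=\ol{S}^{\zar}$, and then works with the \emph{connected components} $S_i$ of $S$: each $\ol{S_i}^{\zar}$ is an irreducible component of $X$, and for a given $x\in\cl(S)$ the paper shrinks $U^x$ to a smaller neighborhood $V$ so that $\ol{S\cap V}^{\zar}$ equals the union of the $\ol{S_i}^{\zar}$ with $x\in\cl(S_i)$. You instead use Proposition~\ref{normal2} to write $S$ as a locally finite union of basic pieces $S_i$ and, for each irreducible component $Y$ of each $\ol{S_i}^{\zar}$, locate a single point $x\in S_i$ at which $S\cap U^x$ contains a Euclidean open piece of $Y$. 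Both strategies work; the paper's reduction via Lemma~\ref{decomp} avoids having to argue about regular points of possibly lower-dimensional irreducible components, while your approach is more hands-on and does not require that preliminary decomposition.

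Two small imprecisions in your write-up are worth cleaning up. First, the equality $\dim(\ol{S_i}^{\zar})=\dim(Y)$ need not hold when $Y$ is an irreducible component of $\ol{S_i}^{\zar}$ of non-maximal dimension; fortunately you never actually use this, since the fact $S_i\not\subset W$ follows directly from $\ol{S_i}^{\zar}\not\subset W$. Second, the justification that a suitable $x\in S_i\cap(Y\setminus W)\cap\Reg(Y)$ exists is slightly circular as phrased (``full local dimension at $x$'' for the point $x$ you are trying to find). The clean statement is that $S_i\cap Y=V_i\cap Y$ (because $Y\subset\ol{S_i}^{\zar}\subset\{f_i=0\}$), so $S_i\cap(Y\setminus W)=V_i\cap(Y\setminus W)$ is a non-empty open subset of $Y$ and therefore cannot lie inside the proper $C$-analytic subset $\Sing(Y)$. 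Once $x$ is chosen in this open set and in $\Reg(Y)$, a small Euclidean neighborhood of $x$ in $Y$ lies in $V_i\cap Y\subset S_i\subset S$, and irreducibility of $Y$ gives $Y\subset\ol{S\cap U^x}^{\zar}$ as you say.
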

\begin{proof}
By Lemma \ref{decomp} we may assume $S=X\cap W$ is a real analytic manifold where $X$ is the Zariski closure of $S$ and $W$ is an open $C$-semianalytic set. Let $\{S_i\}_{i\geq1}$ be the family of the connected components of $S$. Then
\begin{itemize}
\item $S_i$ is a pure dimensional connected amenable $C$-semianalytic set.
\item Each $\ol{S}_i^{\zar}$ is an irreducible component of $X$.
\item $S=\bigcup_{i\geq1}S_i$ and $X=\bigcup_{i\geq1}\ol{S_i}^{\zar}$.
\end{itemize}
Let $x\in\cl(S)$ and assume that $x\in\cl(S_i)$ only for $i=1,\ldots,r$. Let $C:=M\setminus\bigcup_{i\leq r+1}\cl(S_i)$ and let $V:=U^x\setminus C$, which is an open $C$-semianalytic neighborhood of $x$. Notice that $\dim(S_x)=\dim(S\cap V)=\dim(\ol{S\cap V}^{\zar})$ and
$$
\bigcup_{i=1}^r\ol{S}_i^{\zar}=\ol{S\cap V}^{\zar}\subset\ol{S\cap U^x}^{\zar}\subset X
$$
As $\cl(S)=\bigcup_{i\geq1}\cl(S_i)$, it holds $X=\bigcup_{i\geq1}\ol{S}_i^{\zar}=\bigcup_{x\in\cl(S)}\ol{S\cap U^x}^{\zar}$, as required.
\end{proof}

\subsubsection{Definition of the algorithm}
Let $S\subset M$ be a $C$-semianalytic set. Let $X_1$ be the union of the irreducible components of $\ol{S}^{\zar}$ of maximal dimension. Define 
$$
T_1(S):=\Int_{\Reg(X_1)}(S\cap\Reg(X_1)). 
$$
Assume we have already defined $T_1(S),\ldots,T_k(S)$ and let us define $T_{k+1}(S)$. Let $R_{k+1}:=S\setminus\bigcup_{j=1}^kT_j(S)$ and let $X_{k+1}$ be the union of the irreducible components of $\ol{R_{k+1}}^{\zar}$ of maximal dimension. Consider 
$$
T_{k+1}(S):=\Int_{\Reg(X_{k+1})}(S\cap\Reg(X_{k+1})).
$$

\begin{thm}\label{algor1}
Let $S\subset M$ be a $C$-semianalytic set. The following assertions are equivalent
\begin{itemize}
\item[(i)] $S$ is amenable.
\item[(ii)] $S=\bigcup_{j\geq1}T_j(S)$.
\end{itemize}
\end{thm}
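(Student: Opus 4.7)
The proof will establish the two implications separately; the direction (ii) $\Rightarrow$ (i) reduces to structural properties of the algorithm, while (i) $\Rightarrow$ (ii) proceeds by induction on dimension using the decomposition of Lemma \ref{decomp}.

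For (ii) $\Rightarrow$ (i), I will first show that each $T_j(S)$ is amenable. The set $\Reg(X_j)=X_j\setminus\Sing(X_j)$ is $C$-semianalytic (as the intersection of the $C$-analytic $X_j$ with the open $C$-semianalytic set $M\setminus\Sing(X_j)$), and since $C$-semianalytic sets are stable under complement and under topological closure in $M$, the set $\Reg(X_j)\setminus S$ is $C$-semianalytic with $C$-semianalytic closure. Consequently
$$
T_j(S)=\Reg(X_j)\cap\bigl(M\setminus\cl(\Reg(X_j)\setminus S)\bigr)=X_j\cap W_j,
$$
where $W_j:=(M\setminus\Sing(X_j))\cap(M\setminus\cl(\Reg(X_j)\setminus S))$ is open $C$-semianalytic, so $T_j(S)$ is amenable. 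To invoke Corollary \ref{lfuok2} I will check that $\{T_j(S)\}_{j\geq 1}$ together with the family of their Zariski closures is locally finite after eliminating repetitions: each irreducible component of $\ol{T_j(S)}^{\zar}$ sits inside an irreducible component of $X_j\subset\ol{S}^{\zar}$, and the irreducible components of the $C$-analytic set $\ol{S}^{\zar}$ form a locally finite family.

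For (i) $\Rightarrow$ (ii), I induct on $d=\dim S$. The base case $d=0$ is immediate, since then $\ol{S}^{\zar}=X_1=\Reg(X_1)\supset S$ and $T_1(S)=S$. In the inductive step, Lemma \ref{decomp} gives $S=\bigsqcup_{i=1}^{r}(Z_i\cap U_i)$ with $d_i=\dim(Z_i\cap U_i)$ strictly decreasing, $d_1=d$, and $Z_i=\ol{Z_i\cap U_i}^{\zar}$. Because every point of the $d$-manifold $Z_1\cap U_1$ has germ of $Z_1$ of dimension $d$, the Zariski closure $Z_1$ is pure of dimension $d$; combined with $\dim Z_i<d$ for $i\geq 2$ this gives $X_1=Z_1$. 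The open subset $Q_1:=Z_1\cap U_1\cap\Reg(Z_1)$ of $\Reg(X_1)$ lies entirely inside $S$, so $Q_1\subset T_1(S)$, and consequently
$$
R_2=S\setminus T_1(S)\;\subset\;S\setminus Q_1\;=\;\bigsqcup_{i\geq 2}(Z_i\cap U_i)\sqcup(U_1\cap\Sing(Z_1)),
$$
an amenable $C$-semianalytic set of dimension strictly smaller than $d$.

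The heart of the argument consists of two tasks: showing that $R_2$ itself is amenable, and showing that $R_2\subset\bigcup_{j\geq 2}T_j(S)$. For amenability of $R_2$, I will decompose
$$
R_2=(S\setminus X_1)\cup(S\cap\Sing(X_1))\cup\bigl(S\cap\Reg(X_1)\cap\cl_M(\Reg(X_1)\setminus S)\bigr);
$$
the first two summands are visibly amenable, while for the third I use the stratification of $S$ to rewrite it as a finite union of intersections of $C$-analytic sets with open $C$-semianalytic sets. Once $R_2$ is known amenable, the inductive hypothesis yields $R_2=\bigcup_{k\geq 1}T_k(R_2)$, and I will prove by induction on $k$ that every $T_k(R_2)$ is contained in some $T_j(S)$ with $j\geq 2$. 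The crucial observation is that for each irreducible component $C$ of $\ol{R_2}^{\zar}$ there is a step $j$ in the algorithm on $S$ at which $C$ emerges as a maximal-dimensional irreducible component of $\ol{R_j(S)}^{\zar}$---this happens once all higher-dimensional irreducible components of $\ol{S}^{\zar}$ properly containing $C$ have been processed in earlier steps---so that $\Int_{\Reg(X_j(S))}(S\cap\Reg(X_j(S)))$ then absorbs the corresponding points of $T_k(R_2)$, possibly fattened by the ambient geometry of $S$ (as illustrated by the fact that $T_j(S)$ is defined using $S$ rather than $R_j(S)$). Combining with the tautology $T_1(S)\cup R_2=S$ gives $S=\bigcup_{j\geq 1}T_j(S)$. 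The principal obstacle will be the amenability of $R_2$, in particular the boundary term $S\cap\Reg(X_1)\cap\cl_M(\Reg(X_1)\setminus S)$: since amenable $C$-semianalytic sets are not stable under topological closure (see Example \ref{ex}), this term must be handled by exploiting the specific stratified geometry of $S$ rather than by abstract closure properties of the class.
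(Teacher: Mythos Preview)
Your direction (ii) $\Rightarrow$ (i) is correct but more elaborate than needed: by property \ref{clear}(vi) the sequence $T_j(S)$ stabilizes after finitely many steps, so $\bigcup_{j\geq1}T_j(S)$ is a \emph{finite} union of amenable sets and no local-finiteness argument is required.

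The real problem is in (i) $\Rightarrow$ (ii). Your plan hinges on proving that $R_2=S\setminus T_1(S)$ is amenable, and you correctly identify this as the principal obstacle --- but you do not resolve it. The boundary term $S\cap\Reg(X_1)\cap\cl_M(\Reg(X_1)\setminus S)$ is the intersection of an amenable set with a closed $C$-semianalytic set, and there is no reason this should be amenable; ``exploiting the stratified geometry'' is not an argument. In fact the paper's proof is organized precisely to avoid ever asserting that $R_k$ is amenable. Instead it constructs, inductively on $k$, auxiliary \emph{amenable} sets $S'_k$ satisfying $S=\bigcup_{j<k}T_j(S)\cup S'_k$, $\ol{S'_k}^{\zar}=\ol{R_k}^{\zar}$, and $\dim S'_{k+1}<\dim T_k(S)$. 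The key trick is the definition
\[
S'_{k+1}:=S'_k\cap\bigcup_{i=1}^r\ol{Z_i\cap U_i\cap R_{k+1}}^{\zar},
\]
where $S'_k=\bigsqcup_i(Z_i\cap U_i)$ is the decomposition from Lemma \ref{decomp}: this is amenable because it is an intersection of an amenable set with a $C$-analytic set, and one checks it has the same Zariski closure as $R_{k+1}$ and strictly smaller dimension than $T_k(S)$. The strict decrease forces $S'_k=\varnothing$ after finitely many steps.

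Your secondary task --- comparing the algorithm run on $R_2$ with the algorithm run on $S$ from step $2$ onward --- is also not settled. You get $T_1(R_2)\subset T_2(S)$ easily, but then the residuals diverge: $R_2\setminus T_1(R_2)\supsetneq R_2\setminus T_2(S)=R_3(S)$ in general, so the Zariski closures computed at subsequent steps need not match, and the ``crucial observation'' about irreducible components emerging at some step $j$ is not justified. The paper's approach sidesteps this comparison entirely by never invoking an inductive hypothesis of the form ``the theorem holds for lower-dimensional amenable sets''.
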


\subsubsection{Preliminary properties of the operators $T_k(\cdot)$}\label{clear}
Let $S\subset M$ be a $C$-semianalytic set.

(i) \em $T_k(S)$ is an amenable $C$-semianalytic set \em by Lemma \ref{openX}.

(ii) \em If $T_k(S)=\varnothing$, then $R_{k+\ell}=R_k$ for all $\ell\geq1$ and $T_{k+\ell}(S)=T_k(S)=\varnothing$ for all $\ell\geq1$\em.

(iii) \em If $T_{k+1}(S)\subset T_k(S)$, then $R_{k+\ell}=R_{k+1}$ for all $\ell\geq1$ and $T_{k+\ell}(S)=T_{k+1}(S)$ for all $\ell\geq1$\em.

(iv) \em If $T_{k+1}(S)\neq\varnothing$, then $\dim(T_{k+1}(S))=\dim(X_{k+1})\leq\dim(X_k)=\dim(T_k(S))$ \em because $R_{k+1}\subset R_k$.

(v) \em If $\dim(X_{k+1})=\dim(X_k)$, then $\varnothing\neq T_{k+1}(S)\setminus\Sing(X_k)\subset T_k(S)$ and $T_{k+2}(S)=T_{k+1}(S)$\em.

(vi) \em There exists $k_0\geq1$ such that $T_{k_0}(S)=T_{k_0+\ell}(S)$ for all $\ell\geq1$\em.

\begin{proof}
Assertions (i) to (iv) are straightforwardly checked.

(v) As $\dim(X_{k+1})=\dim(X_k)$, then $\dim(\ol{R_{k+1}}^{\zar})=\dim(\ol{R_k}^{\zar})$. As $R_{k+1}\subset R_k$, it holds that $\ol{R_{k+1}}^{\zar}\subset\ol{R_k}^{\zar}$, so $X_{k+1}\subset X_k$ and $\Reg(X_{k+1})\setminus\Sing(X_k)$ is a non-empty open subset of $\Reg(X_k)$ because $\dim(\Sing(X_k))<\dim(X_{k+1})$. Thus $T_{k+1}(S)\setminus\Sing(X_k)\subset T_k(S)$.

We claim: $R_{k+1}\setminus\Sing(X_k)\subset R_{k+2}$. 

As $T_{k+1}(S)\setminus\Sing(X_k)\subset T_k(S)$, we have $R_{k+1}\setminus((T_{k+1}(S)\setminus\Sing(X_k))=R_{k+1}$ and
\begin{equation*}
\begin{split}
R_{k+2}=R_{k+1}\setminus T_{k+1}(S)&=R_{k+1}\setminus((T_{k+1}(S)\setminus\Sing(X_k))\cup(T_{k+1}(S)\cap\Sing(X_k)))\\
&=(R_{k+1}\setminus(T_{k+1}(S)\setminus\Sing(X_k)))\setminus(T_{k+1}(S)\cap\Sing(X_k))\\
&=R_{k+1}\setminus(T_{k+1}(S)\cap\Sing(X_k))\supset R_{k+1}\setminus\Sing(X_k).
\end{split}
\end{equation*}

As $\dim(\Sing(X_k))<\dim(X_{k+1})$ and $R_{k+1}\setminus\Sing(X_k)\subset R_{k+2}\subset R_{k+1}$, we have 
$$
X_{k+1}\subset\ol{R_{k+1}\setminus\Sing(X_k)}^{\zar}\subset\ol{R_{k+2}}^{\zar}\subset\ol{R_{k+1}}^{\zar}.
$$
Thus, $X_{k+1}=X_{k+2}$, so $T_{k+2}(S)=T_{k+1}(S)$.

(vi) Let $k\geq1$. If $T_k(S)=\varnothing$, then by (ii) $T_k(S)=T_{k+\ell}(S)$ for all $\ell\geq1$. Thus, we assume $T_k(S)\neq\varnothing$. We know by (iv) that $\dim(X_{k+1})\leq\dim(X_k)$. If $\dim(X_{k+1})=\dim(X_k)$, then by (v) $T_{k+2}(S)=T_{k+1}(S)$, so by (iii) $T_{k+\ell}(S)=T_{k+1}(S)$ for all $\ell\geq1$. The case missing is $\dim(X_{k+1})<\dim(X_k)$. Repeat the previous argument with the index $k+1$ in the place of $k$ and observe that in finitely many steps we achieve the statement.
\end{proof}

\subsubsection{Proof of Theorem \em \ref{algor1}}
By \ref{clear} it follows that (ii) $\Longrightarrow$ (i). To prove the converse we proceed as follows. 

\noindent{\bf Step 1.} We claim: \em For each $k\geq1$ it holds $S=\bigcup_{j=1}^kT_j(S)\cup S'_{k+1}$ where $S'_{k+1}$ is either empty or an amenable $C$-semianalytic set such that $\ol{S'_{k+1}}^{\zar}=\ol{R_{k+1}}^{\zar}$ and $\dim(S'_{k+1})<\dim(T_k(S))$.\em

Let $k\geq 1$ and assume $S=L_k\cup S'_k$ where 
$$
L_k:=\begin{cases}
\varnothing&\text{if $k=1$,}\\
\bigcup_{j=1}^{k-1}T_j(S)&\text{if $k\geq2$},
\end{cases}
$$
$R_1:=S'_1:=S$ and $S'_k$ is either empty or an amenable $C$-semianalytic set such that $\ol{S'_k}^{\zar}=\ol{R_k}^{\zar}$ and $\dim(S_k')<\dim(T_{k-1}(S))$ if $k\geq2$. Let us prove that $S=L_{k+1}\cup S'_{k+1}$ where $L_{k+1}$ and $S'_{k+1}$ satisfy the corresponding properties.

If $S_k'=\varnothing$, then $T_k(S)=\varnothing$ and it is enough to take $S_{k+1}'=\varnothing$. Assume $S_k'\neq\varnothing$, so it is an amenable $C$-semianalytic set such that $\ol{S'_k}^{\zar}=\ol{R_k}^{\zar}$ and $\dim(S_k')<\dim(T_{k-1}(S))$.

Notice that $S_k'\cap\Reg(X_k)\subset S\cap\Reg(X_k)$ has no empty interior in $\Reg(X_k)$, so $T_k(S)\neq\varnothing$. Write $S_k'=\bigsqcup_{i=1}^r(Z_i\cap U_i)$ where $Z_i$ is a $C$-analytic set, $U_i$ is an open $C$-semianalytic set, each $Z_i\cap U_i$ is a real analytic manifold, $Z_i$ is the Zariski closure of $Z_i\cap U_i$ and $\dim(Z_{i+1}\cap U_{i+1})<\dim(Z_i\cap U_i)$ for $i=1,\ldots,r-1$ (see Lemma \ref{decomp}).

Let $S_{k+1}':=S_k'\cap\bigcup_{i=1}^r\ol{Z_i\cap U_i\cap R_{k+1}}^{\zar}$, which is amenable because it is the intersection of two amenable $C$-semianalytic sets. In addition $S_k'\cap R_{k+1}\subset S_{k+1}'$, so
$$
S=\bigcup_{j=1}^{k-1}T_j(S)\cup S_k'=\bigcup_{j=1}^kT_j(S)\cup (S_k'\cap R_{k+1})=\bigcup_{j=1}^{k}T_j(S)\cup S_{k+1}'=L_{k+1}\cup S_{k+1}'. 
$$
We have to prove that $\ol{S_{k+1}'}^{\zar}=\ol{R_{k+1}}^{\zar}$. 

For each $i=1,\ldots,r$ we have $\ol{Z_i\cap U_i\cap R_{k+1}}^{\zar}\subset\ol{R_{k+1}}^{\zar}$. Thus $S_{k+1}'\subset\ol{R_{k+1}}^{\zar}$, so $\ol{S_{k+1}'}^{\zar}\subset\ol{R_{k+1}}^{\zar}$. The converse inclusion follows because $R_{k+1}=S\setminus L_{k+1}\subset S_{k+1}'$.

We claim: \em $\dim(\ol{Z_i\cap U_i\cap R_{k+1}}^{\zar})<\dim(T_k(S))$ for $i=1,\ldots,r$\em. For $i=2,\ldots,r$ the result is clear, so let us prove $\dim(\ol{Z_1\cap U_1\cap R_{k+1}}^{\zar})<\dim(T_k(S))$.

As $\dim(Z_1\cap U_1)=\dim(S_k')=\dim(T_k(S))$, we have $(Z_1\cap U_1)\setminus\Sing(X_k)$ is a real analytic manifold of dimension $\dim(X_k)$, so it is an open subset of $\Reg(X_k)$ and $(Z_1\cap U_1)\setminus\Sing(X_k)\subset T_k(S)$. Thus, $(Z_1\cap U_1)\setminus T_k(S)\subset\Sing(X_k)$, so 
$$
\dim(\ol{(Z_1\cap U_1)\setminus T_k(S)}^{\zar})\leq\dim(\Sing(X_k))<\dim(X_k)=\dim(S)=\dim(T_k(S)).
$$
We conclude $\dim(S_{k+1}')<\dim(T_k(S))$.

\noindent{\bf Step 2.} To finish we prove that there exists an index $k\geq1$ such that $S_k'=\varnothing$. For each $k\geq1$, it holds that either $S_k'=\varnothing$ or
$$
\dim(X_k)=\dim(\ol{R_k^{\zar}})=\dim(\ol{S_k'}^{\zar})=\dim(S_k')<\dim(T_{k-1}(S))=\dim(X_{k-1}).
$$
As $\dim(X_1)=\dim(S)<+\infty$, there exist only finitely many posible $k$'s such that $S_k'\neq\varnothing$. Consequently, there exists $k\geq1$ such that $S=\bigcup_{j=1}^kT_j(S)=\bigcup_{j\geq1}T_j(S)$, as required.
\qed

\begin{example}\label{dimktamenot}
Consider the open $C$-semianalytic set
$$
S_0:=\bigcup_{k\geq1}\{0<x<k, 0<y<1/k\}\subset\R^3
$$
and let $S_1:=\{z=0\}$. Define $S:=S_0\cup S_1$ which is amenable because it is the union of two amenable $C$-semianalytic sets. Let $S_{(3)}$ be the set of points of $S$ of local dimension $3$ and let us check that it is not amenable. Indeed, observe first that $S_{(3)}=S_0\cup\cl(S_0\cap\{z=0\})$. Assume by way of contradiction that $S_{(3)}$ is amenable. Then also $S':=S_{(3)}\cap\{z=0\}=\cl(S_0\cap\{z=0\})$ should be amenable. To prove that $S'$ is not amenable we apply Lemma \ref{algor1}. Observe that $X_1=\{z=0\}$, $T_1(S')=S_0\cap\{z=0\}$, $R_2=\cl(S_0\cap\{z=0\})\setminus(S_0\cap\{z=0\})$, $X_2=\{z=0\}$, $T_2(S')=S_0\cap\{z=0\}$ and $S'\neq T_1(S')$. By Theorem \ref{algor1} $S'$ is not amenable, so $S_{(3)}$ is not amenable.
\end{example}

\section{Irreducible amenable $C$-semianalytic sets}\label{s4}

An amenable $C$-semianalytic subset $S$ of a real analytic manifold $M$ is \em irreducible \em if the ring $\an(S)$ is an integral domain. This notion extends both the concepts of irreducible $C$-analytic set and irreducible semialgebraic set \cite[3.2]{fg}.

\subsection{Basic properties concerning irreducibility}\label{bpci}
One deduces straightforwardly the following facts concerning irreducibility: 

(i) Irreducible amenable $C$-semianalytic sets are connected because the ring of analytic functions of a disconnected amenable $C$-semianalytic set is the direct sum of the rings of analytic functions of its connected components, so it contains zero divisors. In particular, a real analytic manifold is irreducible if and only if it is connected. 

(ii) The Zariski closure $\ol{S}^{\zar}_U$ of an irreducible amenable $C$-semianalytic set $S$ in an open neighborhood $U$ is irreducible because $\an(\ol{S}^{\zar}_U)\hookrightarrow\an(S), f\mapsto f|_S$. As $S$ is amenable, $\dim(\ol{S}^{\zar}_U)=\dim(S)$.

(iii) An amenable $C$-semianalytic set that is the image of an irreducible amenable $C$-semi\-analytic set under an analytic map is irreducible. In particular, the irreducibility of amenable $C$-semianalytic sets is preserved under analytic diffeomorphisms.

(iv) Let $T\subset S\subset\R^n$ be amenable $C$-semianalytic sets such that $T$ is irreducible. Then the ideal
$$
\ideal(T,S):=\{f\in\an(S):\ f|_T=0\}
$$ 
is a prime ideal of $\an(S)$ because $\an(T)$ is an integral domain and $\ideal(T,S)$ is the kernel of the restriction homomorphism $\an(S)\to\an(T),\ f\mapsto f|_T$.

\begin{example}
The previous notion has a misleading behavior if we extend it to arbitrary $C$-semianalytic sets. Let $U\subset\R^3$ be an open connected neighborhood of $S:=\bigcup_{k\geq0}S_k$ where 
$$
S_k:=\{z^2-(y-k)x^2=0,y\geq k\}
$$ 
(Example \ref{tamenot}). If $f\in\an(U)$ vanishes identically on $S$, then $f=0$ on $U$, so $\ideal(S)=(0)$ and $\ol{S}^{\zar}_U=U$. Consequently, $\an(S)=H^0(S,\an_M|_S)$ is an integral domain because $S$ is connected. Thus, $S$ should be irreducible, even if we feel that it should not. Note that $S^{(\ell)}:=\bigcup_{k=0}^\ell S_k$ is reducible for all $\ell$. Its irreducible components are $\{z^2-(y-k)x^2=0,y\geq 0\}$ for $0\leq k\leq\ell$.
\end{example}

\begin{lem}\label{charirred}
Let $S$ be an amenable $C$-semianalytic set. The following assertions are equivalent:
\begin{itemize}
\item[(i)] $S$ is irreducible.
\item[(ii)] For each open neighborhood $U$ of $S$ in $M$, the $C$-analytic set $\ol{S}^{\zar}_U$ is irreducible. 
\item[(iii)] If $f\in\an(S)$ and $\dim(\ceros(f))=\dim(S)$, then $f$ is identically zero.
\end{itemize}
\end{lem}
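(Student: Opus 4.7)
The plan is to establish the cycle (i) $\Rightarrow$ (ii) $\Rightarrow$ (iii) $\Rightarrow$ (i).

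\textbf{(i) $\Rightarrow$ (ii).} This is essentially already recorded in \ref{bpci}(ii). For any open neighborhood $U$ of $S$ in $M$, restriction defines an injection $\an(\ol{S}^{\zar}_U)\hookrightarrow\an(S),\ f\mapsto f|_S$, hence the hypothesis that $\an(S)$ is a domain forces $\an(\ol{S}^{\zar}_U)$ to be a domain. Since a $C$-analytic set is irreducible if and only if its ring of global analytic sections is an integral domain (Whitney--Bruhat), $\ol{S}^{\zar}_U$ is irreducible. Moreover amenability gives $\dim(\ol{S}^{\zar}_U)=\dim(S)$.

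\textbf{(ii) $\Rightarrow$ (iii).} Set $d:=\dim(S)$, fix $f\in\an(S)$ with $\dim(\ceros(f))=d$ and choose an open neighborhood $U\subset M$ of $S$ together with an extension $\tilde f\in\an(U)$ of $f$. The key point is that $\ceros(f)=S\cap\{\tilde f=0\}$ is the intersection of an amenable $C$-semianalytic set and a $C$-analytic set, hence amenable by \ref{ucc}. Consequently the Zariski closure $\ol{\ceros(f)}^{\zar}_U$ is a $C$-analytic subset of $\ol{S}^{\zar}_U$ of dimension exactly $d$. The analytic set $\{\tilde f=0\}$ contains $\ceros(f)$ and thus contains $\ol{\ceros(f)}^{\zar}_U$. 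By (ii), $\ol{S}^{\zar}_U$ is irreducible of dimension $d$; a proper $C$-analytic subset of an irreducible $C$-analytic set has strictly smaller dimension, so $\ol{\ceros(f)}^{\zar}_U=\ol{S}^{\zar}_U$. Therefore $\tilde f$ vanishes on $\ol{S}^{\zar}_U\supset S$, i.e.\ $f\equiv 0$.

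\textbf{(iii) $\Rightarrow$ (i).} Let $f,g\in\an(S)$ with $fg=0$. Then $S=\ceros(f)\cup\ceros(g)$ pointwise. Since the dimension of a $C$-semianalytic set is the supremum of the local dimensions of its germs and $S_x=\ceros(f)_x\cup\ceros(g)_x$ for every $x\in S$, we have
$$
\dim(S)=\max\{\dim(\ceros(f)),\dim(\ceros(g))\}.
$$
Whichever set attains dimension $d=\dim(S)$ forces, by (iii), the corresponding function to be identically zero. Hence $\an(S)$ has no nonzero zero-divisors.

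The principal technical step is the argument in (ii) $\Rightarrow$ (iii): I need to know that the Zariski closure of $\ceros(f)$ inherits the dimension $d$. This is precisely where amenability is used twice—first to conclude that $\ceros(f)$ is amenable (as $S\cap\{\tilde f=0\}$), and then to conclude that its Zariski closure preserves dimension, so that it must fill up the irreducible ambient $C$-analytic set $\ol{S}^{\zar}_U$.
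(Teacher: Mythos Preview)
Your proof is correct and follows essentially the same cycle (i) $\Rightarrow$ (ii) $\Rightarrow$ (iii) $\Rightarrow$ (i) as the paper, with the same use of amenability in (ii) $\Rightarrow$ (iii) to force the zero set of $\tilde f$ to swallow the whole irreducible Zariski closure. The only stylistic difference is in (iii) $\Rightarrow$ (i): the paper picks a point $x$ where the germ $S_x$ is regular of maximal dimension and uses irreducibility of that germ to conclude one factor vanishes near $x$, whereas you use the pointwise decomposition $S=\ceros(f)\cup\ceros(g)$ and compare dimensions directly; both arguments are equally short and valid.
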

\begin{proof}
(i) $\Longrightarrow$ (ii) The ideal $\ideal(S,U)$ is a prime ideal of $\an(U)$, so $\ol{S}^{\zar}_U=\ceros(\ideal(S,U))$ is an irreducible $C$-analytic subset of $U$.

(ii) $\Longrightarrow$ (iii) Let $f'\in\an(U)$ be an analytic extension of $f$ to an open neighborhood $U\subset M$ of $S$. As $f'$ vanishes on a subset of maximal dimension of the irreducible $C$-analytic set $\ol{S}^{\zar}_U$, we have $f'|_{\ol{S}^{\zar}_U}\equiv 0$, so $f=f'|_S\equiv0$.

(iii) $\Longrightarrow$ (i) Let $f_1, f_2\in\an(S)$ be such that $f_1f_2\equiv 0$. Let $x\in S$ be such that the germ $S_x$ is regular of maximal dimension. As $S_x$ is irreducible, we may assume $f_1$ is identically zero on an open neighborhood of $x$ in $S$. Thus, $\dim(\ceros(f_1))=\dim(S)$, so $f_1\equiv0$. Consequently, $S$ is irreducible.
\end{proof}

\subsection{Normalization and irreducibility}
We are ready to prove Theorem \ref{dpm} that relates irreducibility with connexion in the normalization of the complexification of the Zariski closure.

\begin{proof}[Proof of Theorem \em \ref{dpm}]\setcounter{paragraph}{0} 
It is enough to prove: \em For each open neighborhood $U\subset M$ of $S$ the Zariski closure $X:=\ol{S}^{\zar}_U$ is irreducible if and only there exists a connected component $T$ of $\pi^{-1}(S)$ such that $\pi(T)=S$\em.

\paragraph{} Suppose first $\pi(T)=S$ for some connected component $T$ of $\pi^{-1}(S)$. Assume $M$ is imbedded in $\R^n$ as a closed real analytic submanifold and $\widetilde{X}$ is a complex analytic subset of $\C^n$. Note that $\dim_\C(\widetilde{X})=\dim_\R(X)=:d$. Fix an open neighborhood $U\subset M$ of $S$ and let $\Omega\subset\C^n$ be an invariant open neighborhood of $S$ such that $\Omega\cap M=U$. By Remark \ref{norm1} $(\pi^{-1}(\widetilde{X}\cap\Omega),\pi|)$ is the normalization of $\widetilde{X}\cap\Omega$. As $T$ is connected and $\dim_\R(T)=d$, it is contained in a connected component $Z$ of $\pi^{-1}(\widetilde{X}\cap\Omega)$ of dimension $d$. By Remark \ref{norm1} $\pi(Z)$ is an irreducible component of $\widetilde{X}\cap\Omega$ of dimension $d$. As $S=\pi(T)\subset\pi(Z)\subset \widetilde{X}$, we have $\ol{S}^{\zar}_U\subset\pi(Z)\cap U$. A $\pi(Z)$ is irreducible and has dimension $d$, it is the complex Zariski closure of $\ol{S}^{\zar}_U$ in $\Omega$. As this holds for all invariant open neighborhood $\Omega\subset\C^n$ of $\ol{S}^{\zar}_U$, we deduce $\ol{S}^{\zar}_U$ is irreducible.

\paragraph{}\label{pieces} Suppose next that $\ol{S}^{\zar}_U$ is irreducible for each open neighborhood $U\subset M$ of $S$. We will construct a suitable open neighborhood $U\subset M$ of $S$ in $M$ to prove the existence of a connected component $T$ of $\pi^{-1}(S)$ such that $\pi(T)=S$. 

Let $\{T_i\}_{i\geq1}$ be the connected components of $\pi^{-1}(S)$. As $S$ is invariant, $\pi^{-1}(S)$ is invariant. Let $\{\Theta_i'\}_{i\geq1}$ be pairwise disjoint open subsets of $Y$ such that $T_i\subset\Theta_i'$ for $i\geq1$. Denote $\Theta':=\bigcup_{i\geq1}\Theta_i'$ and $\Theta:=\Theta'\cap\widehat{\sigma}(\Theta')$. Notice that $\Theta$ is an invariant neighborhood of $\pi^{-1}(S)$ in $Y$ and $\Theta_i:=\Theta_i'\cap\Theta\subset\widetilde{X}$ is an open neighborhood of $T_i$. Clearly, $\Theta_i\cap\Theta_j=\varnothing$ if $i\neq j$.

Define $C:=Y\setminus\Theta$ which is a closed invariant subset of $Y$ that does not intersect $\pi^{-1}(S)$. As $\pi$ is proper and invariant, $\pi(C)$ is an invariant closed subset of $\widetilde{X}$. It holds $S\cap\pi(C)=\varnothing$, so $\pi^{-1}(S)\cap\pi^{-1}(\pi(C))=\varnothing$. Substituting $C$ by the invariant closed set $\pi^{-1}(\pi(C))$, we may assume that $C=\pi^{-1}(\pi(C))$, so the restriction map $\pi|_{Y\setminus C}:Y\setminus C\to \widetilde{X}\setminus\pi(C)$ is proper and surjective. Define $\Omega:=\C^n\setminus\pi(C)$ and $U:=\Omega\cap M$, which is an open neighborhood of $S$ in $M$.

\paragraph{}As $\ol{S}^{\zar}_U$ is irreducible, the complex Zariski closure $Z$ of $\ol{S}^{\zar}_U$ in $\Omega$ is irreducible, it is contained in $\widetilde{X}$ and its dimension equals $\dim_\R(S)=\dim_\R(X)=\dim_\C(\widetilde{X})$. Thus, it is an irreducible component of $\widetilde{X}':=\widetilde{X}\cap\Omega$. In addition $(Y':=\pi^{-1}(\widetilde{X}'),\pi|_{Y'})$ is by Remark \ref{norm1} the normalization of $\widetilde{X}'$. Thus, there exists a connected component $K$ of $Y'$ such that $Z=\pi(K)$.

As $K\subset\bigcup_{i\geq1}\Theta_i$ is connected, $K\subset\Theta_{i_0}$ for some $i_0\geq1$. As $T_i\cap K\subset T_i\cap\Theta_{i_0}=\varnothing$ if $i\neq i_0$,
$$
\pi(T_{i_0})=\pi\Big(K\cap\bigcup_{i\geq1}T_i\Big)=\pi(K\cap\pi^{-1}(S))=Z\cap S=S,
$$
as required. 
\end{proof}

\begin{remarks}
(i) Let $Y^{\widehat{\sigma}}$ be the set of fixed points of $\widehat{\sigma}$. Recall that if $X$ is coherent, $\pi^{-1}(X)=Y^{\widehat{\sigma}}$, see \cite[Thm. 3.14]{gmt}. If such is the case, the connected component $T$ in the statement of Theorem \ref{dpm} is contained in $Y^{\widehat{\sigma}}$.

(ii) If $X$ is not coherent but $\pi(\pi^{-1}(S)\cap Y^{\widehat{\sigma}})=S$, we cannot assure that the connected component $T$ of $\pi^{-1}(S)$ such that $\pi(T)=S$ satisfies in addition $\pi(T\cap Y^{\widehat{\sigma}})=S$. Let $X$ be the irreducible $C$-analytic set of equation $x^4-z^2(4-z^2)y^4=0$. Consider the $C$-semianalytic set 
$$
S:=(X\cap\{0<z<1,y\neq0\})\cup\{x=0,y=0,-1<z<1\}.
$$
\begin{center}
\begin{figure}[ht]
\hspace*{-3mm}\includegraphics[width=.65\textwidth]{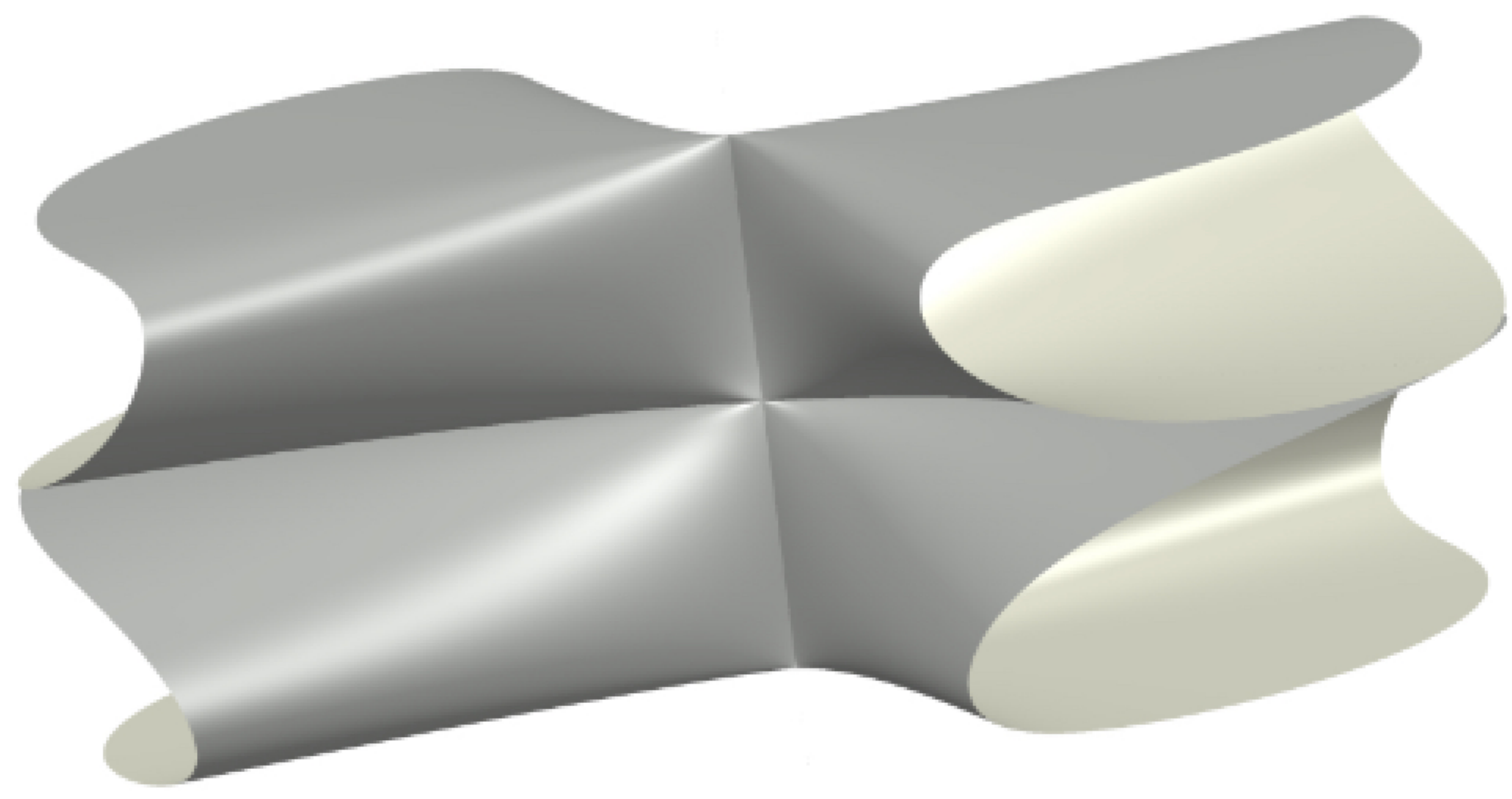}
\caption{$X:=\{x^4-z^2(4-z^2)y^4=0\}$}
\end{figure}
\end{center}
\vspace*{-7mm}
Consider the analytic diffeomorphism
$$
f:\R^3\to\R^2\times(-1,1),\ (x,y,z)\mapsto\Big(x,y,\frac{z}{\sqrt{1+z^2}}\Big).
$$
Observe that 
\begin{multline*}
f^{-1}(X\cap(\R^2\times(-1,1)))=\{x^4(1+z^2)-z^2(4+3z^2)y^4=0\}\\
=\big\{(x\sqrt[4]{1+z^4})^2-z(\sqrt[4]{4+3z^2})=0\big\}\cup\big\{(x\sqrt[4]{1+z^4})^2+z(\sqrt[4]{4+3z^2})=0\big\}.
\end{multline*} 
and
$$
f^{-1}(S)=\big\{(x\sqrt[4]{1+z^4})^2-z(\sqrt[4]{4+3z^2})=0\big\}
$$ 
is analytically equivalent to Whitney's umbrella, so it is irreducible. The complex analytic set $\widetilde{X}:=\{x^4-z^2(4-z^2)y^4=0\}\subset\C^3$ is a complexification of $X$. Its normalization is the non-singular complex analytic set $Y:=\{u^2-vz=0,v^2+z^2-4=0\}\subset\C^4$ together with the holomorphic map $\pi:Y\to\widetilde{X},\ (y,z,u,v)\mapsto(uy,y,z)$. We have $\pi^{-1}(S)=T_1\cup T_2\cup T_3\cup T_4$ where
\begin{align*}
T_1&:=\Big\{(y,z,\pm\sqrt{z\sqrt{4-z^2}},\sqrt{4-z^2}), 0\leq z<1\Big\}\\
T_2&:=\Big\{(0,z,\pm\sqrt{-z\sqrt{4-z^2}}\sqrt{-1},\sqrt{4-z^2}),-1<z\leq0\Big\}\\
T_3&:=\Big\{(0,z,\pm\sqrt{z\sqrt{4-z^2}}\sqrt{-1},-\sqrt{4-z^2}),0\leq z<1\Big\}\\
T_4&:=\Big\{(0,z,\pm\sqrt{-z\sqrt{4-z^2}},-\sqrt{4-z^2}),-1<z\leq0\Big\}
\end{align*}
The connected components of $\pi^{-1}(S)$ are $T:=T_1\cup T_2$ and $T':=T_3\cup T_4$. It holds $\pi(T)=S$. In addition $\pi^{-1}(S)\cap Y^{\widehat{\sigma}}=T_1\cup T_4$ and it satisfies $\pi(T_1\cup T_4)=S$. However, $T_1\cup T_4$ is not connected and $\pi(T\cap Y^{\widehat{\sigma}})=\pi(T_1)\subsetneq S$.
\end{remarks}

\section{Irreducible components of an amenable $C$-semianalytic set}\label{s5}

The next natural step is to explore the notion of \em irreducible components \em of an amenable $C$-semianalytic set (see Definition \ref{irredcomptame} and Theorem \ref{bijection}). This theory shall generalize theories of irreducible components for particular cases of amenable $C$-semianalytic sets as: $C$-analytic sets, complex analytic subset of a Stein manifold (endowed with their underlying real structure) and semialgebraic sets. More precisely,

(i) If $X$ is a $C$-analytic set, its irreducible components $\{X_i\}_{i\geq1}$ as a $C$-analytic set coincides with the ones obtained if we consider $X$ as an amenable $C$-semianalytic set. 

\begin{proof}
Let us check that $\{X_i\}_{i\geq1}$ satisfies the conditions in Definition \ref{irredcomptame}. Only condition (2) requires a comment. Let $X_1\subset T\subset X$ be an irreducible amenable $C$-semianalytic set. By Lemma \ref{ni} below there exists $j\geq1$ such that $X_1\subset T\subset X_j$, so $j=1$ and $T=X_1$, as required.
\end{proof}

(ii) Recall that if $X$ is an irreducible complex analytic subset of a Stein manifold \cite[IV.\S1.Cor.1, pag.68]{n}, then $\Reg(X):=X\setminus\Sing(X)$ is connected. If we consider the real structure $(X^\R,\an_X^\R)$ induced by $X$, we deduce that $X^\R$ is irreducible as a $C$-analytic set. In addition if $X$ is a general complex analytic subset of a Stein manifold, the irreducible components of $X$ are by \cite[IV.\S1.Cor.2, pag.67]{n} the closures in $X$ of the connected components of $\Reg(X)$. Consequently the irreducible components of $X$ and the irreducible components of $X^\R$ as a $C$-analytic set coincide. Thus, the irreducible components of $X$ as a complex analytic set coincides with the ones obtained if we consider $X$ as an amenable $C$-semianalytic set. 

(iii) If $S$ is a semialgebraic set, its irreducible components as a semialgebraic set coincides with the ones obtained if we consider $X$ as an amenable $C$-semianalytic set \cite[3.2, 4.1]{fg}. 

We prove next the existence and the unicity of the family of irreducible components of an amenable $C$-semianalytic set in the sense of Definition \ref{irredcomptame}. To get advantage of the full strength of the algebraic properties of the ring $\an(S)$ we introduce first some milder concepts and study their main properties. 

\subsection{Weak irreducible components of an amenable $C$-semianalytic set}\label{irredcomp}

Let $S\subset M$ be a subset. We say that $T\subset S$ is an \em $S$-tame $C$-semianalytic set \em if there exists an open neighborhood $U\subset M$ of $S$ such that $T$ is an amenable $C$-semianalytic subset of $U$. There is no ambiguity to say that an $S$-tame $C$-semianalytic set $T\subset S$ is \em irreducible \em if $\an(T)$ is an integral domain. Given an amenable $C$-semianalytic set $S\subset M$ and an $S$-tame $C$-semianalytic set $T\subset S$, we define the \em ideal of $T$ with respect to $S$ \em as
$$
\ideal(T,S):=\{f\in\an(S):\ f|_T\equiv0\}.
$$

\begin{defn}[Weak irreducible components]\label{irredcompwtame}
Let $S\subset M$ be an amenable $C$-semi\-analytic set. A countable locally finite family $\{S_i\}_{i\geq1}$ in $S$ of $S$-tame $C$-semianalytic sets is \em a family of weak irreducible components of \em $S$ if the following conditions are fulfilled: 
\begin{itemize}
\item[(1)] Each $S_i$ is irreducible.
\item[(2)] If $S_i\subset T\subset S$ is an irreducible $S$-tame $C$-semianalytic set, then $S_i=T$.
\item[(3)] $S_i\neq S_j$ if $i\neq j$.
\item[(4)] $S=\bigcup_{i\geq1} S_i$.
\end{itemize} 
\end{defn}

\begin{thm}[Existence and uniqueness of weak irreducible components]\label{irredcomp2}
Let $S\subset\R^n$ be an amenable $C$-semianalytic set. Then there exists the family of weak irreducible components $\{S_i\}_{i\geq1}$ of $S$ and it is unique. In addition it satisfies 
\begin{itemize}
\item[(i)] $S_i=\ceros(\ideal(S_i,S))$ for $i\geq1$. In particular, $S_i$ is a closed subset of $S$.
\item[(ii)] The ideals $\ideal(S_i,S)$ are the minimal prime (saturated) ideals of $\an(S)$.
\end{itemize}
\end{thm}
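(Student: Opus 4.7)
The plan is to derive both existence and uniqueness from the primary decomposition machinery for saturated ideals of $\an(S)$ developed in Section \ref{s2}, together with the structural result for amenable $C$-semianalytic sets in Proposition \ref{neatirred} (and the underlying Lemma \ref{decomp}). The bridge between algebra and geometry is the identity \eqref{osdl}, together with the injectivity of the restriction maps $\an(\ol{S}^{\zar}_U)\hookrightarrow\an(S)$.

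First I would verify that the zero ideal $(0)\subset\an(S)$ is saturated: an element $f\in\an(S)$ whose germ $f_x\in\an_{S,x}$ vanishes at every $x\in S$ is identically zero, since $\an_S$ is a sheaf and $f$ is a global section. Hence $\sqrt{(0)}$ is saturated as well, and by Lemma \ref{pridecomp} admits a locally finite primary decomposition. Corollary \ref{lemmaidreal} then yields a (unique) irredundant decomposition $\sqrt{(0)}=\bigcap_{i\geq1}\gtp_i$ into saturated prime ideals, and Lemma \ref{intpri} guarantees that the $\gtp_i$ are exactly the minimal (saturated) prime ideals of $\an(S)$, giving (ii) once existence is established.

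Next I would define $S_i:=\ceros(\gtp_i)$ and show the family $\{S_i\}_{i\geq1}$ meets Definition \ref{irredcompwtame}. Using Proposition \ref{neatirred} choose an open neighborhood $U\subset M$ of $S$ such that the Zariski closure $X:=\ol{S}^{\zar}_U$ has irreducible components $\{X_j\}_{j\geq1}$ and the pieces $T_j:=X_j\cap S$ form a locally finite family of $S$-tame $C$-semianalytic sets whose Zariski closures $X_j$ are pairwise distinct. Each $\ideal(T_j,S)$ is a saturated prime ideal (property \ref{bpci}(iv) together with irreducibility of $T_j$, which follows from Lemma \ref{charirred} applied in $U$), and $\bigcap_j\ideal(T_j,S)=(0)$ because $S=\bigcup_j T_j$. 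By Lemma \ref{intpri} the minimal primes of $\an(S)$ are precisely the minimal members of $\{\ideal(T_j,S)\}_{j\geq1}$, so after relabeling $\gtp_i=\ideal(T_i,S)$ and $S_i=T_i$. This identification simultaneously provides (i), the tameness and local finiteness of the family, the covering property (4), the distinctness (3), and irreducibility (1) via the injection $\an(S)/\gtp_i\hookrightarrow\an(S_i)$ together with Lemma \ref{charirred}.

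For the maximality condition (2): if $S_i\subset T\subset S$ is an irreducible $S$-tame $C$-semianalytic set, then $\ideal(T,S)$ is a saturated prime ideal of $\an(S)$ contained in $\ideal(S_i,S)=\gtp_i$. By minimality of $\gtp_i$ we get $\ideal(T,S)=\gtp_i$, hence $T\subset\ceros(\gtp_i)=S_i$ by (i), so $T=S_i$. Uniqueness of the family is then automatic: any family $\{S_i'\}_{i\geq1}$ satisfying (1)--(4) yields saturated prime ideals $\ideal(S_i',S)$ whose intersection is contained in $\sqrt{(0)}$ (by (4)) and each of which is minimal (by (2) and the argument just given), hence the two families coincide set-theoretically via the bijection with minimal primes.

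The step I expect to be the main obstacle is the geometric realization of the $\gtp_i$ as the ideals of a locally finite family of $S$-tame $C$-semianalytic subsets. Purely algebraically the primary decomposition is at our disposal, but matching it with an actual locally finite collection of amenable $C$-semianalytic subsets of $S$ requires the neat-closure result Proposition \ref{neatirred} and the decomposition Lemma \ref{decomp}, and in particular requires passing from the local behavior of $\an(S)$ to a single neighborhood $U$ of $S$ where the Zariski closure is controlled and has irreducible components whose intersection with $S$ is tame. Once this geometric dictionary is in place, the rest follows formally from the uniqueness of minimal primes.
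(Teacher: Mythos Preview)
Your proposal has a genuine circularity. You invoke Proposition \ref{neatirred} to produce an open neighborhood $U$ in which the irreducible components $X_j$ of $\ol{S}^{\zar}_U$ satisfy $T_j:=X_j\cap S$ irreducible with $\ol{T_j}^{\zar}_U=X_j$. But Proposition \ref{neatirred} is stated in the introduction as a consequence of the theory of irreducible components; in Section \ref{s5} it is explicitly described as ``only a reformulation of Proposition \ref{neatwirred}'', whose proof begins by taking the weak irreducible components $S_i$ already furnished by Theorem \ref{irredcomp2}. So you are assuming the theorem you are proving. Relatedly, your claim that irreducibility of $T_j$ ``follows from Lemma \ref{charirred} applied in $U$'' is not valid: Lemma \ref{charirred} requires the Zariski closure to be irreducible in \emph{every} open neighborhood of $T_j$, not in a single $U$, and for an arbitrary $U$ you do not even know that $\ol{T_j}^{\zar}_U=X_j$ rather than a proper subvariety.

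The paper's proof avoids this detour entirely. Starting, as you do, from the primary decomposition $\bigcap_i\gtp_i$ of the zero ideal (equivalently of $\ideal(S)$ in $H^0(S,\an_M|_S)$), it sets $S_i:=\ceros(\gtp_i)$ and establishes the four properties of Definition \ref{irredcompwtame} \emph{directly}: Lemma \ref{sheaf} gives that each $S_i$ is $S$-tame; local finiteness and $S=\bigcup_iS_i$ come from the decomposition itself; irreducibility follows from Lemma \ref{clue3} once one shows $\ideal(\ceros(\gtp_i),S)=\gtp_i$, which is done by the standard trick of choosing $g_i\in(\bigcap_{j\neq i}\gtp_j)\setminus\gtp_i$ and observing $h_ig_i\in\ideal(S)$ for any $h_i\in\ideal(S_i,S)$; and maximality uses Lemma \ref{ni}. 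No appeal to Proposition \ref{neatirred} or \ref{neatwirred} is needed. Your algebraic setup (the first two paragraphs) is correct and matches the paper; the gap is in trying to identify the $\ceros(\gtp_i)$ with pieces $X_j\cap S$ via a result that is logically downstream.
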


\subsubsection{Preliminary results}
Before proving Theorem \ref{irredcomp2} we introduce some auxiliary results.

\begin{lem}\label{sheaf}
Let $S\subset M$ be a semianalytic set and let $\gta$ be an ideal of $\an(S)$. Then $\ceros(\gta)$ is the intersection of $S$ with a $C$-analytic subset $X$ of an open neighborhood $U\subset M$ of $S$. 
\end{lem}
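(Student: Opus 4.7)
The plan is to realize $\ceros(\gta)$ as the zero set of a coherent sheaf of ideals extended from $S$ to an ambient neighborhood. I first produce from $\gta$ a coherent ideal sheaf on $S$, then lift it to $\an_M|_S$ and extend it to a neighborhood, and finally identify its zero set with $\ceros(\gta)$.

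Let $\gta\an_S\subset\an_S$ denote the image of the natural morphism $\bigoplus_{f\in\gta}\an_S\to\an_S$ sending the basis element $e_f$ to $f$; it is the sheaf of ideals of $\an_S$ generated by $\gta$ as global sections, and its stalk at each $x\in S$ is $\gta\an_{S,x}$. Since $\an_{S,x}$ is a quotient of the Noetherian local ring $\an_{M,x}$, it is itself Noetherian, so $\gta\an_{S,x}$ is finitely generated. Thus $\gta\an_S$ is a locally finitely generated subsheaf of the coherent sheaf of rings $\an_S$, hence coherent. Pulling back under the surjection $\an_M|_S\to\an_S=\an_M|_S/\J_S$ produces a sheaf $\Fhaz\subset\an_M|_S$ fitting in an exact sequence
\[
0\to\J_S\to\Fhaz\to\gta\an_S\to 0.
\]
Since $\J_S$ and $\gta\an_S$ are coherent, so is $\Fhaz$; it is a coherent sheaf of ideals of the coherent sheaf of rings $\an_M|_S$.

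I then extend $\Fhaz$ to a neighborhood of $S$: by the standard extension theorem for coherent analytic sheaves on semianalytic subsets of a paracompact real analytic manifold, resting on Frisch's coherence result \cite[Cor.(I,8)]{f}, there exist an open neighborhood $U\subset M$ of $S$ and a coherent sheaf of ideals $\Jhaz\subset\an_U$ with $\Jhaz|_S=\Fhaz$. Set $X:=\ceros(\Jhaz)\subset U$; since $\Jhaz$ is a coherent sheaf of ideals on $U$, the set $X$ is a $C$-analytic subset of $U$. It remains to show $X\cap S=\ceros(\gta)$. For $x\in X\cap S$, every germ of $\Jhaz_x=\Fhaz_x$ vanishes at $x$, so in particular each $f\in\gta$ satisfies $f(x)=0$, i.e.\ $x\in\ceros(\gta)$. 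Conversely, for $x\in\ceros(\gta)$ the stalk $\Fhaz_x$ is generated as an $\an_{M,x}$-ideal by $\J_{S,x}$ together with lifts $\tilde f_x\in\an_{M,x}$ of the germs at $x$ of the elements $f\in\gta$; germs in $\J_{S,x}$ vanish at $x$ because they come from functions vanishing on $S$ near $x$, and each $\tilde f_x$ takes the value $f(x)=0$ at $x$; hence $\Jhaz_x=\Fhaz_x\subset\gtm_{M,x}$ and $x\in X$.

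The main technical point is the extension step, that is, passing from a coherent sheaf on the (possibly non-closed) semianalytic set $S$ to a coherent sheaf on an ambient open neighborhood $U$; this is where the full strength of the Frisch-type coherence/extension theorem does the essential work. All other steps are routine manipulations within the coherent-sheaf formalism already set up in Section \ref{s2}.
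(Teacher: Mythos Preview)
Your proof is correct and follows essentially the same route as the paper's. The paper lifts $\gta$ to the ideal $\gtA\subset H^0(S,\an_M|_S)$, invokes Frisch to conclude that $\gtA\an_M|_S$ is coherent, extends it to a coherent sheaf on an open neighborhood $U$ via \cite[I.2.8]{gmt}, and takes the zero set; your sheaf $\Fhaz$ obtained by pulling back $\gta\an_S$ along $\an_M|_S\to\an_S$ is exactly $\gtA\an_M|_S$, so the two constructions coincide and the remaining steps are identical.
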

\begin{proof}
We have defined $\an(S)$ in \eqref{os} as the quotient $H^0(S,\an_M|_S)/\ideal(S)$. Let $\gtA\supset\ideal(S)$ be the ideal of $H^0(S,\an_M|_S)$ such that $\gta=\gtA/\ideal(S)$. By \cite[Cor. (I,8)]{f} the sheaf of ideals $\gtA\an_M|_S$ is $\an_M|_S$-coherent. By \cite[I.2.8]{gmt} \em there exists an open neighborhood $U\subset M$ of $S$ and an analytic $\an_U$-coherent sheaf ${\mathcal F}$ such that $\gtA\an_M|_S={\mathcal F}|_S$.\em

Let $X$ be the zero set of the $\an_U$-coherent sheaf ${\mathcal F}$, which is a $C$-analytic subset of $U$. As $\gtA\an_M|_S={\mathcal F}|_S$, we deduce $\ceros(\gta)=\ceros(\gtA)=S\cap X$, as required.
\end{proof}

\begin{lem}\label{clue2}
Let $S\subset M$ be a semianalytic set and let $\gta$ be an ideal of $\an(S)$. Denote $T=\ceros(\gta)$. Then for each $f\in\an(T)$ there exists $g\in\an(S)$ such that $\ceros(f)=\ceros(g)$.
\end{lem}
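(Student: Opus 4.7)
The plan is to construct $g \in \an(S)$ in the form $g := \tilde{F}^{2} + p$, where $\tilde{F} \in \an(S)$ is a global analytic extension of $f$ to a neighborhood of $S$ and $p \in \an(S)$ is a non-negative function whose zero-set on $S$ is exactly $T$. On $T$ one then has $p = 0$ and $g = \tilde{F}^{2} = f^{2}$, so the zero-set of $g$ on $T$ coincides with $\ceros(f)$; on $S \setminus T$ one has $g \geq p > 0$, so $g$ has no additional zeros. Combining the two cases gives $\ceros(g) = \ceros(f)$.

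First I would apply Lemma \ref{sheaf} to obtain an open neighborhood $U \subset M$ of $S$ and a $C$-analytic subset $X \subset U$ with $T = S \cap X$. Since $X$ is $C$-analytic we may write $X = \{p_{1} = 0, \dots, p_{r} = 0\}$ for finitely many $p_{1}, \dots, p_{r} \in \an(U)$; the sum $p := p_{1}^{2} + \cdots + p_{r}^{2}$ is then non-negative on $U$ with $\ceros_{U}(p) = X$, and its class in $\an(S)$ has zero-set exactly $T$. Second, I would produce $\tilde{F}$ by establishing surjectivity of the restriction homomorphism $\an(S) \to \an(T)$. For this I would invoke Cartan's Theorem B in the same style the paper already uses for $\ideal(S)$: by \cite[Cor.(I,8)]{f} the sheaf of ideals $\ideal(T)\an_{M}|_{S}$ on $S$ is coherent, so $H^{1}(S, \ideal(T)\an_{M}|_{S}) = 0$ and the exact sequence
$$
0 \to \ideal(T)\an_{M}|_{S} \to \an_{M}|_{S} \to \an_{M}|_{S}/\ideal(T)\an_{M}|_{S} \to 0
$$
induces a surjection on global sections. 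The quotient sheaf has stalk $0$ at every point of $S \setminus T$ and stalk $\an_{T,x}$ at every point $x \in T$, so its global sections are canonically identified with $H^{0}(T, \an_{T}) = \an(T)$. Composing with the surjection $H^{0}(S, \an_{M}|_{S}) \twoheadrightarrow \an(S)$ established in the paper, naturality of the restriction to $T$ yields the desired surjection $\an(S) \twoheadrightarrow \an(T)$, through which $f$ can be lifted to $\tilde{F} \in \an(S)$. Finally, the verification that $g := \tilde{F}^{2} + p|_{S}$ satisfies $\ceros(g) = \ceros(f)$ is the short case analysis above.

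The main obstacle is the second step, the existence of $\tilde{F}$. The cohomological vanishing that powers it follows from the coherence of $\ideal(T)\an_{M}|_{S}$ already recorded in the Preliminaries, but one still has to identify the global sections of the quotient sheaf with $\an(T)$ and to check that the resulting surjection $H^{0}(S, \an_{M}|_{S}) \twoheadrightarrow \an(T)$ factors through $\an(S)$ via the restriction map; once this extension result is in hand, the construction $g = \tilde{F}^{2} + p$ finishes the proof routinely.
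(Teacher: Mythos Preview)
Your overall strategy is sound and, once the extension $\tilde F$ is in hand, the verification that $g=\tilde F^{2}+p$ has the right zero set is correct. However, the heart of your argument---the surjectivity of $\an(S)\to\an(T)$---is not established by what you wrote. First, the symbol $\ideal(T)\an_M|_S$ is ill-formed with the paper's conventions: $\ideal(T)$ is an ideal of $H^0(T,\an_M|_T)$, not of $H^0(S,\an_M|_S)$, so it does not act on $\an_M|_S$. If instead you mean the ideal $\ideal_S(T)\subset H^0(S,\an_M|_S)$ of germs at $S$ vanishing on $T$, then you must show that for $x\in T$ the stalk $\ideal_S(T)\,\an_{M,x}$ equals $\ideal(T)\,\an_{M,x}$; one inclusion is clear, but the other amounts to extending a germ at $T$ (vanishing on $T$) to a germ at $S$ with the same image at $x$, which is essentially the very surjectivity you are trying to prove. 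So the stalk identification, and hence the identification $H^0(S,\an_M|_S/\mathcal J)\cong\an(T)$, is circular as written.

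The paper sidesteps this extension problem entirely. Starting from a representative $f'\in\an(V)$ of $f$ and an equation $h\in\an(U_1)$ of $X':=X\cap V$ on the open set $U_1:=(U_0\setminus X)\cup V\supset S$, it glues the ideal generated by $h^2+f'^2$ near $X'$ with the unit ideal off $X'$ to obtain a coherent ideal sheaf on $U_1$; a single global equation $g'\in\an(U_1)$ of its zero set $X'\cap\ceros(f')$ then restricts to the desired $g\in\an(S)$. The key point is that $f'$ is only ever used on $X'\subset V$, so no extension of $f$ to a neighborhood of $S$ is needed. Your formula $g=\tilde F^2+p$ is morally the same object, but the paper's sheaf-gluing reaches it without the surjectivity lemma. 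If you want to repair your route, the cleanest fix is to prove $\an(S)\twoheadrightarrow\an(T)$ by applying Cartan's extension theorem to the $C$-analytic set $X'\subset U_1$: lift $f'|_{X'}$ to some $\tilde F'\in\an(U_1)$ and set $\tilde F:=\tilde F'|_S$.
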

\begin{proof}
By Lemma \ref{sheaf} there exists an open neighborhood $U_0\subset M$ of $S$ and a $C$-analytic set $X\subset U_0$ such that $T=S\cap X$. Let $V\subset U_0$ be an open neighborhood of $T$ and ${f'}\in\an(V)$ an analytic function such that ${f'}|_T=f$. We claim: $S\subset U_1:=(U\setminus X)\cup V$. 

Indeed, $S\cap U_1=(S\setminus X)\cup(V\cap S)\supset(S\setminus T)\cup T=S$, so $S\subset U_1$. 

It holds: \em $X':=X\cap U_1=X\cap V$ is a $C$-analytic subset of $U_1$\em. 

Let $g$ be an analytic equation of $X'$ in $V$ and consider the $\an_{U_1}$-coherent sheaf of ideals
$$
{\mathcal F}_x:=\begin{cases}
g\an_{U_1,x}&\text{ if $x\in V$},\\
\an_{U_1,x}&\text{ otherwise}.
\end{cases}
$$
As $X'$ is the zero set of ${\mathcal F}$, we conclude that $X'$ is a $C$-analytic subset of $U_1$.

In addition $S\cap X'=S\cap X\cap V=T$. Let $h\in\an(U_1)$ be an analytic equation of $X'$ in $U_1$ and consider the $\an_{U_1}$-coherent sheaf of ideals
$$
{\mathcal F}_x:=\begin{cases}
(h^2+{f'}^2)\an_{U_1,x}&\text{ if $x\in X'$},\\
\an_{U_1,x}&\text{ otherwise}.
\end{cases}
$$
Its zero set is a $C$-analytic subset of $U_1$, so there exists $g'\in\an(U_1)$ such that its zero set coincides with the zero set $\ceros(h,{f'})=X'\cap\ceros({f'})$ of ${\mathcal F}$. Thus, if $g:=g'|_S$, we have
$$
\ceros(g)=S\cap\ceros(g')=S\cap X'\cap\ceros({f'})=T\cap\ceros({f'})=\ceros(f),
$$
as required.
\end{proof}

\begin{lem}\label{clue3}
Let $S\subset M$ be an amenable $C$-semianalytic set and let $\gta$ be an ideal of $\an(S)$. Then 
\begin{itemize}
\item[(i)] $\ceros(\gta)$ is an $S$-tame $C$-semianalytic set.
\item[(ii)] $\ceros(\gta)$ is irreducible if and only if $\ideal(\ceros(\gta),S)$ is a prime ideal of $\an(S)$. 
\end{itemize}
\end{lem}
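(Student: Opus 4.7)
For part (i), the plan is to apply Lemma \ref{sheaf} directly: since $\gta$ is an ideal of $\an(S)$, there exist an open neighborhood $U\subset M$ of $S$ and a $C$-analytic subset $X\subset U$ with $\ceros(\gta)=S\cap X$. Since $S$ is amenable in $M$ it remains amenable when viewed inside $U$, and the family of amenable $C$-semianalytic sets is closed under finite intersections (see \ref{ucc}); in particular the intersection of an amenable $C$-semianalytic set with a $C$-analytic set is amenable. Hence $\ceros(\gta)=S\cap X$ is amenable in $U$, i.e.\ an $S$-tame $C$-semianalytic set.

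For the forward direction of (ii), write $T:=\ceros(\gta)$ and consider the restriction map
$$
\rho:\an(S)\longrightarrow\an(T),\qquad f\mapsto f|_T,
$$
which is a well-defined ring homomorphism whose kernel is exactly $\ideal(T,S)$. If $T$ is irreducible then $\an(T)$ is an integral domain, and $\an(S)/\ideal(T,S)$ embeds in $\an(T)$, so it is a domain and $\ideal(T,S)$ is prime.

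For the converse, I plan to use Lemma \ref{clue2} to lift factorizations from $T$ to $S$. Assume $\ideal(T,S)$ is prime and let $f_1,f_2\in\an(T)$ satisfy $f_1f_2\equiv 0$. By Lemma \ref{clue2} there exist $g_1,g_2\in\an(S)$ with $\ceros(g_i)=\ceros(f_i)$ for $i=1,2$. Then, as subsets of $S$,
$$
\ceros(g_1g_2)=\ceros(g_1)\cup\ceros(g_2)=\ceros(f_1)\cup\ceros(f_2)=T,
$$
so $g_1g_2$ vanishes identically on $T$, i.e.\ $g_1g_2\in\ideal(T,S)$. Primality yields, after relabeling, $g_1\in\ideal(T,S)$, which means $T\subset\ceros(g_1)=\ceros(f_1)\subset T$. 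Thus $\ceros(f_1)=T$ and $f_1\equiv0$ on $T$, showing $\an(T)$ is a domain.

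The proof is essentially a direct combination of Lemmas \ref{sheaf} and \ref{clue2} with the basic closure properties of amenable $C$-semianalytic sets; the only point requiring any care is the reverse direction of (ii), where one must observe that the lifted functions $g_i$ have zero sets contained in $T$, so that the union $\ceros(g_1)\cup\ceros(g_2)$ is automatically contained in (and equals) $T$ rather than spilling out into $S$. Once this is noted, the primality argument goes through cleanly and no further analytic input is needed.
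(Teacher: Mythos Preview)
Your proof is correct and follows essentially the same route as the paper's own argument: part (i) is deduced from Lemma \ref{sheaf} (you spell out the amenability of $S\cap X$ in $U$, which the paper leaves implicit), and part (ii) uses the restriction map for the forward direction and Lemma \ref{clue2} to lift zero sets for the converse, exactly as the paper does. Your explicit remark that $\ceros(g_i)=\ceros(f_i)\subset T$ is the only subtlety, and you handle it correctly.
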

\begin{proof}
(i) This statement follows from Lemma \ref{sheaf}.

(ii) Recall that if $T=\ceros(\gta)$ is irreducible, then $\ideal(T,S)$ is prime (see \ref{bpci}(iv)). Conversely, assume that $\ideal(T,S)$ is prime and let $f_1,f_2\in\an(T)$ be such that $f_1f_2=0$. By \ref{clue2} there exist analytic functions $g_1,g_2\in\an(S)$ such that $\ceros(g_i)=\ceros(f_i)$ for $i=1,2$. Thus, $\ceros(g_1g_2)=\ceros(f_1f_2)=T$, so $g_1g_2\in \ideal(T,S)$. As $\ideal(T,S)$ is a prime ideal, we assume $g_1\in\ideal(T,S)$. Thus, $\ceros(f_1)=\ceros(g_1)=T$, so $f_1=0$. Consequently, $\an(T)$ is an integral domain and $T$ is irreducible.
\end{proof}

\begin{lem}\label{ni}
Let $S\subset T\subset E\subset M$ be $E$-tame $C$-semianalytic sets such that $S$ is irreducible and let $\{T_i\}_{i\geq1}$ be a family of $E$-tame $C$-semianalytic sets. Assume 
\begin{itemize}
\item[(i)] $T_i=\ceros(\ideal(T_i,E))$ for $i\geq1$,
\item[(ii)] $T=\bigcup_{i\geq1}T_i$,
\item[(iii)] The family $\{T_i\}_{i\geq1}$ is locally finite in $E$,
\end{itemize}
Then there exists $i\geq1$ such that $S\subset T_i$.
\end{lem}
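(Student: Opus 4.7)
The plan is to reduce the geometric inclusion problem to the algebraic statement in Lemma~\ref{intpri} applied inside the ring $\an(E)$, using the locally finite family of ideals $\{\ideal(T_i,E)\}_{i\geq 1}$ together with the prime ideal $\ideal(S,E)$.

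First I would verify that all the ideals involved are saturated. For an arbitrary subset $A\subset E$, the ideal $\ideal(A,E)=\{f\in\an(E):f|_A\equiv 0\}$ is saturated: if $f\in\widetilde{\ideal(A,E)}$, then for every $x\in A$ we have $f_x\in\ideal(A,E)\an_{E,x}$, and since every element of $\ideal(A,E)$ vanishes at $x$, so does $f$; hence $f\in\ideal(A,E)$. Applied to $S$ and each $T_i$ this shows $\ideal(S,E)$ and all $\ideal(T_i,E)$ are saturated ideals of $\an(E)$. The irreducibility of $S$ ensures that $\an(S)$ is an integral domain, and since $\ideal(S,E)$ is the kernel of the natural restriction homomorphism $\an(E)\to\an(S)$, it is a saturated prime ideal.

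Next I would check the two hypotheses needed to invoke Lemma~\ref{intpri}. The family $\{\ideal(T_i,E)\}_{i\geq 1}$ is locally finite in the sense of ideals precisely because $\ceros(\ideal(T_i,E))=T_i$ by assumption~(i), and $\{T_i\}_{i\geq 1}$ is locally finite in $E$ by~(iii). From $T=\bigcup_{i\geq 1}T_i$ one deduces
$$
\ideal(T,E)=\bigcap_{i\geq 1}\ideal(T_i,E),
$$
and since $S\subset T$, we have $\ideal(T,E)\subset\ideal(S,E)$. Altogether,
$$
\bigcap_{i\geq 1}\ideal(T_i,E)\subset\ideal(S,E).
$$

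Now Lemma~\ref{intpri} yields an index $i\geq 1$ with $\ideal(T_i,E)\subset\ideal(S,E)$. Taking zero sets reverses the inclusion, so using~(i) once more,
$$
S\subset\ceros(\ideal(S,E))\subset\ceros(\ideal(T_i,E))=T_i,
$$
which is the desired conclusion. The only genuinely delicate point in the argument is the verification that $\ideal(S,E)$ and $\ideal(T_i,E)$ are saturated so that Lemma~\ref{intpri} applies; every other step is bookkeeping.
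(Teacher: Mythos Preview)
Your proof is correct and follows essentially the same approach as the paper: both reduce to Lemma~\ref{intpri} by observing that $\ideal(S,E)$ is a saturated prime ideal, that the family $\{\ideal(T_i,E)\}_{i\geq1}$ is locally finite with intersection $\ideal(T,E)\subset\ideal(S,E)$, and then take zero sets. Your version is simply more explicit about why the vanishing ideals are saturated and why $\ideal(S,E)$ is prime, details the paper leaves implicit.
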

\begin{proof}
As $S$ is irreducible, $\ideal(S,E)$ is a saturated prime ideal of $\an(E)$. The family of saturated ideals $\{\ideal(T_i,E)\}_{i\geq1}$ is locally finite and $\bigcap_{i\geq1}\ideal(T_i,E)=\ideal(T,E)\subset \ideal(S,E)$. By Lemma \ref{intpri} $\ideal(T_i,E)\subset\ideal(S,E)$ for some $i\geq1$, so $S\subset\ceros(\ideal(S,E))\subset\ceros(\ideal(T_i,E))=T_i$, as required.
\end{proof}

As a straightforward consequence of Lemma \ref{ni} and Theorem \ref{irredcomp2} proved below, we have:

\begin{cor}\label{nic}
Let $S\subset M$ be an amenable $C$-semianalytic set and let $\{S_i\}_{i\geq1}$ be the family of the irreducible components of $S$. Then $S_k\not\subset\bigcup_{i\neq k}S_i$ for each $k\geq1$.
\end{cor}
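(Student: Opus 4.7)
The plan is to argue by contradiction using Lemma \ref{ni}. Suppose there exists $k\geq 1$ such that $S_k\subset\bigcup_{i\neq k}S_i$. I would first verify that the hypotheses of Lemma \ref{ni} hold with $E:=S$, with the irreducible set being $S_k$, and with the ambient union $T:=\bigcup_{i\neq k}S_i$ decomposed via $T_i:=S_i$ for $i\neq k$. Each $S_i$ is an irreducible amenable $C$-semianalytic subset of $M$ contained in $S$, hence in particular $S$-tame (it is amenable in the open neighborhood $M$ of $S$). The family $\{S_i\}_{i\neq k}$ is locally finite in $S$ since, by Definition \ref{irredcomptame}, the full family $\{S_i\}_{i\geq 1}$ is.

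The key remaining input is condition (i) of Lemma \ref{ni}: the equality $S_i=\ceros(\ideal(S_i,S))$ for every $i$. Since each $S_i$ is irreducible, the ideal $\ideal(S_i,S)$ is a saturated prime ideal of $\an(S)$, and the desired identity is supplied by Theorem \ref{irredcomp2}(i) (irreducible components in the sense of Definition \ref{irredcomptame} are in particular weak irreducible components, so they satisfy the conclusions of that theorem). With the hypotheses of Lemma \ref{ni} verified, the lemma yields an index $j\neq k$ such that $S_k\subset S_j$.

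At this stage I would close the argument by invoking the very definition of irreducible component. Since $S_j$ is an irreducible amenable $C$-semianalytic subset of $S$ with $S_k\subset S_j\subset S$, condition (2) of Definition \ref{irredcomptame} applied to $S_i=S_k$ with $T=S_j$ forces $S_k=S_j$, contradicting condition (3) of the same definition, which requires $S_k\neq S_j$ whenever $k\neq j$. This contradiction proves that $S_k\not\subset\bigcup_{i\neq k}S_i$.

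The only nontrivial step is the appeal to Theorem \ref{irredcomp2}(i) in order to secure the identity $S_i=\ceros(\ideal(S_i,S))$; this is the algebraic bridge that converts the assumed set-theoretic inclusion into an inclusion of saturated ideals, which is then captured by Lemma \ref{ni}. The rest is a direct application of the defining properties (2) and (3) of irreducible components.
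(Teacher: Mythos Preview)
Your approach is essentially the paper's: it explicitly states that Corollary~\ref{nic} is ``a straightforward consequence of Lemma~\ref{ni} and Theorem~\ref{irredcomp2}'', and your argument (contradiction via Lemma~\ref{ni}, using Theorem~\ref{irredcomp2}(i) to secure $S_i=\ceros(\ideal(S_i,S))$, then invoking the maximality and distinctness conditions) is exactly how one unpacks that sentence.

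One point of care deserves mention. In the paper's logical order this corollary sits \emph{before} Theorem~\ref{irredcomp3} and is used in its proof (and in Lemma~\ref{neigh}), so at this stage the family $\{S_i\}$ must be read as the \emph{weak} irreducible components of Definition~\ref{irredcompwtame}---for which Theorem~\ref{irredcomp2}(i) applies directly. Your parenthetical ``irreducible components in the sense of Definition~\ref{irredcomptame} are in particular weak irreducible components'' is not immediate from the definitions: condition~(2) of Definition~\ref{irredcompwtame} is strictly stronger than condition~(2) of Definition~\ref{irredcomptame} (it tests against all $S$-tame $T$, not just amenable ones), so this identification really requires Theorem~\ref{irredcomp3}, which in turn invokes Corollary~\ref{nic}. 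There is no actual circularity in the global logic (the corollary is first established for weak components, then transferred), but your write-up would be cleaner if you either argue directly for the weak components, or note that the Definition~\ref{irredcomptame} version follows a posteriori once Theorem~\ref{irredcomp3} is available.
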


\subsubsection{Proof of Theorem \em\ref{irredcomp2}}
We divide the proof into two parts:

\noindent{\em Existence of the weak irreducible components.} Let $\ideal(S)=\bigcap_{i\geq1}\gtp_i$ be a locally finite (irredundant) primary decomposition of $\ideal(S)$. As $\ideal(S)$ is a radical ideal, the ideals $\gtp_i$ are by Corollary \ref{lemmaidreal} prime ideals of the ring $H^0(S,\an_M|_S)$. We have:
\begin{itemize}
\item $S_i:=\ceros(\gtp_i)\subset S$ is by Lemma \ref{sheaf} an $S$-tame $C$-semianalytic set.
\item $S=\bigcup_{i\geq 1}S_i$.
\item The family $\{S_i\}_{i\geq1}$ is locally finite in $S$.
\end{itemize}

We claim: \em Each $S$-tame $C$-semianalytic set $S_i$ is irreducible and $\ideal(\ceros(\gtp_i),S)=\gtp_i$ for $i=1,\ldots,\ell$\em. 

By Lemma \ref{clue3} it is enough to show that $\ideal(\ceros(\gtp_i),S)\subset\gtp_i$ for each $i\geq1$. As the primary decomposition is irredundant, $(\bigcap_{j\neq i}\gtp_j)\setminus\gtp_i\neq\varnothing$. Pick $g_i\in(\bigcap_{j\neq i}\gtp_j)\setminus\gtp_i$. Observe that $h_ig_i\in \ideal(S)\subset\gtp_i$ for each $h_i\in \ideal(\ceros(\gtp_i),S)$ because $h_ig_i$ vanishes identically on $S=S_i\cup\bigcup_{j\neq i}S_j$. As $g_i\not\in\gtp_i$, we conclude $h_i\in\gtp_i$, that is, $\ideal(\ceros(\gtp_i),S)\subset\gtp_i$.

The $S$-tame $C$-semianalytic sets $S_i$ for $i\geq1$ satisfy conditions (1), (3), (4) in Definition \ref{irredcompwtame}. Let us check that they also satisfy condition (2).

Indeed, let $S_i\subset T\subset S$ be an irreducible amenable $C$-semianalytic set. By Lemma \ref{ni} there exists $j\geq1$ such that $S_i\subset T\subset S_j$, so $\gtp_j\subset\gtp_i$. As $\gtp_i$ is a minimal prime ideal between those containing $\ideal(S)$, we deduce $\gtp_j=\gtp_i$, so $S_i=T=S_j$, as required.

\noindent{\em Uniqueness of weak irreducible components.} Let $\{S_i\}_{i\geq1}$ be the family of weak irreducible components constructed above and let $\{T_j\}_{j\geq1}$ be another family of weak irreducible components of $S$ satisfying the conditions in Definition \ref{irredcompwtame}. By Lemma \ref{ni} each $T_i\subset S_j$ for some $j\geq1$. By condition (2) in Definition \ref{irredcompwtame} we have $T_i=S_j$. It follows straightforwardly by Lemma \ref{ni} that $\{S_i\}_{i\geq1}=\{T_j\}_{j\geq1}$, as required.
\qed

\subsubsection{Neat behavior of the weak irreducible components}
We show that the behavior of the Zariski closure of an amenable $C$-semianalytic set $S$ in a small enough open neighborhood $U\subset M$ of $S$ with respect to the weak irreducible components is neat.

\begin{prop}\label{neatwirred}
Let $S\subset M$ be an amenable $C$-semi\-analytic set. There exist an open neighborhood $U\subset M$ of $S$ such that if $X:=\ol{S}^{\zar}$ and $\{X_i\}_{i\geq1}$ are the irreducible components of $X$, then $\{S_i:=X_i\cap S\}_{i\geq1}$ is the family of the weak irreducible components of $S$ and $X_i:=\ol{S_i}^{\zar}$ for $i\geq1$. In particular, if $S$ is a global $C$-semianalytic subset of $M$, each $S_i$ is a global $C$-semianalytic subset of $U$.
\end{prop}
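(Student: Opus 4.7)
The plan is to use Theorem \ref{irredcomp2} together with the direct-limit description \eqref{osdl} of $\an(S)$ to realize the minimal primes of $\an(S)$ as prime ideals of $\an(U)$ for a sufficiently small neighborhood $U$, so that the corresponding zero loci become the irreducible components of $\ol{S}^{\zar}_U$.

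Let $\{S_i\}_{i\geq1}$ be the weak irreducible components of $S$ produced by Theorem \ref{irredcomp2}, so that $\gtp_i:=\ideal(S_i,S)$ are the minimal prime ideals of $\an(S)$. By \eqref{osdl} and the coherence of $\an_M|_S$ \cite[Cor.(I,8)]{f}, one selects an open neighborhood $U\subset M$ of $S$ such that each $\gtp_i$ is the image in $\an(S)$ of a prime ideal $\widehat{\gtP}_i\subset\an(U)$; concretely, $\widehat{\gtP}_i:=\ideal(S_i,U)$ satisfies $\widehat{\gtP}_i/\ideal(S,U)\cong\gtp_i$ inside $\an(U)/\ideal(S,U)$, so $\widehat{\gtP}_i$ is prime in $\an(U)$. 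Shrinking $U$ further and using the local finiteness of $\{S_i\}_{i\geq1}$ in $M$, we may arrange that $X_i:=\ceros(\widehat{\gtP}_i)=\ol{S_i}^{\zar}_U$ is irreducible for every $i$, that $\{X_i\}_{i\geq1}$ is a locally finite family of pairwise distinct sets in $U$, and that $\bigcap_{i\geq1}\widehat{\gtP}_i=\ideal(S,U)$.

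From this last identity, $X:=\ol{S}^{\zar}_U=\ceros(\ideal(S,U))=\bigcup_{i\geq1}X_i$, and the $\{X_i\}_{i\geq1}$ are precisely the irreducible components of $X$ (irreducible, locally finite, pairwise noncomparable because the $\widehat{\gtP}_i$ are minimal among primes above $\ideal(S,U)$). The identity $X_i\cap S=\ceros(\widehat{\gtP}_i)\cap S=\ceros(\gtp_i)=S_i$ shows that $\{X_i\cap S\}_{i\geq1}$ is the family of weak irreducible components of $S$, and the equality $X_i=\ol{S_i}^{\zar}_U$ is built into the construction. For the final assertion, if $S$ is a global $C$-semianalytic set, written as a finite union of basic $C$-semianalytic subsets of $M$, and each $X_i$ is the common zero set of finitely many analytic functions on $U$, then $S_i=X_i\cap S$ inherits a presentation as a finite union of basic $C$-semianalytic subsets of $U$.

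The main obstacle is the shrinking step producing a single neighborhood $U$ in which all the countably many minimal primes of $\an(S)$ are simultaneously realized as prime ideals of $\an(U)$ with locally finite, distinct irreducible zero loci; once such a $U$ is chosen, the statement reduces to a direct dictionary between the algebraic primary decomposition of $\ideal(S,U)$ and the geometric decomposition of $\ol{S}^{\zar}_U$ into its irreducible components.
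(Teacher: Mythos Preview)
Your outline correctly identifies the strategy --- realize the weak irreducible components $S_i$ as intersections $X_i\cap S$ with the irreducible components of $\ol{S}^{\zar}_U$ for a carefully chosen $U$ --- and this is exactly what the paper does. However, you have not actually carried out the key step, and you acknowledge this yourself in your final paragraph: the entire content of the proof is the construction of a single neighborhood $U$ in which all the countably many $S_i$ simultaneously acquire irreducible $C$-analytic Zariski closures forming a locally finite family. Saying ``shrinking $U$ further and using the local finiteness of $\{S_i\}_{i\geq1}$ in $M$, we may arrange \ldots'' is not a proof; it is a restatement of what has to be shown.

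Two concrete difficulties hide behind that sentence. First, the weak irreducible components are only locally finite in $S$, not in $M$; one must first pass to a neighborhood of $S$ to promote this. Second, and more seriously, each $S_i$ is a priori only amenable in its own neighborhood $U_i$ of $S$ (this is what $S$-tame means), so Lemma~\ref{charirred} only guarantees irreducibility of $\ol{S_i}^{\zar}_V$ for $V\subset U_i$; and each $S_i=\ceros(\gtp_i)$ is only known to be of the form $Y_i\cap S$ for a $C$-analytic $Y_i$ in its own $U_i$ (Lemma~\ref{sheaf}). Producing a common $U$ requires gluing these individual data: the paper does this via Lemma~\ref{neighs} (to get a locally finite family of neighborhoods $U_i'\subset U_i$ of the $S_i$), then Lemma~\ref{bigneigh} (to find a single $U$ in which all the closures $Y_i'\cap U$ are closed), and finally a coherent-sheaf argument to conclude that each $Y_i'\cap U$ is $C$-analytic in $U$. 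Your direct-limit heuristic \eqref{osdl} does not circumvent this: it realizes finitely many elements of $\an(S)$ at a common level, but not an infinite locally finite family of ideals with the required geometric properties. Also, your claimed isomorphism $\widehat{\gtP}_i/\ideal(S,U)\cong\gtp_i$ is in general only an inclusion, since an element of $\ideal(S_i,S)$ need not extend to the particular $U$ you have fixed.
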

\begin{proof}
For each $i\geq1$ there exist by Lemma \ref{sheaf} an open neighborhood $U_i\subset M$ of $S$ and a $C$-analytic set $Y_i$ such that $S_i=Y_i\cap S$. 

As the family $\{S_i\}_{i\geq1}$ is locally finite in $S$, we may assume after substituting $M$ by an open neighborhood of $S$ that \em the family $\{S_i\}_{i\geq1}$ is locally finite in $M$\em. Indeed, for each $x\in S$ let $U^x\subset X$ be an open neighborhood of $x$ such that only finitely many $S_i$ meet $U^x$. It is enough to take $M':=\bigcup_{x\in Y}U^x$ instead of $M$.

By Lemma \ref{neighs} there exists a locally finite family $\{U_i'\}_{i\geq1}$ of open neighborhoods $U_i'\subset U_i$ of $S_i$. As $S_i$ is an amenable $C$-analytic subset of $U_i'$, the Zariski closure $Y_i'\subset Y_i\cap U_i'$ of $S_i$ in $U_i'$ has its same dimension. Each $Y_i'$ is a closed subset of $U_i'$ and it holds 
$$
S_i=Y_i'\cap S\subset\cl_M(Y_i')\cap S\subset\cl_M(Y_i)\cap S\cap U_i=\cl_{U_i}(Y_i)\cap S=Y_i\cap S=S_i,
$$
that is, $\cl_M(Y_i')\cap S=Y_i\cap S$. By Lemma \ref{bigneigh} there exists an open neighborhood $U\subset M$ of $S$ such that $Y_i'':=Y_i'\cap U$ is a closed subset of $U$ for $i\geq1$. Let $h_i$ be an analytic equation of $Y_i'$ in $\an(U_i')$. Notice that $Y_i''$ is a $C$-analytic subset of $U$ because it is the zero set of the coherent sheaf on $U$:
$$
{\mathcal F}_x:=\begin{cases}
h_i\an_{U,x}&\text{ if $x\in Y_i''$},\\
\an_{U,x}&\text{ otherwise}.
\end{cases}
$$
As $S_i$ is irreducible, the Zariski closure $X_i\subset Y_i''$ of $S_i$ in $U$ is an irreducible $C$-analytic set. Notice that 
\begin{itemize}
\item[(1)] $S_i\subset S\cap X_i\subset S\cap Y_i=S_i$, so $X_i\cap S=S_i$,
\item[(2)] The family $\{X_i\}_{i\geq1}$ is locally finite because $X_i\subset U_i'$ and the family $\{U_i'\}_{i\geq1}$ is locally finite. 
\end{itemize}
Putting all together, we are done.
\end{proof}

\subsection{Irreducible components of an amenable $C$-semianalytic set}
Our next purpose is to prove the existence and uniqueness of the family of the irreducible components of an amenable $C$-semianalytic set $S\subset M$. The strategy is to show that the weak irreducible components are in fact the irreducible components of $S$. Once this is done Theorem \ref{bijection} follows from Theorem \ref{irredcomp2} and Proposition \ref{neatirred} is only a reformulation of Proposition \ref{neatwirred}. In the following $S\subset M$ denotes an amenable $C$-semianalytic set.

\begin{thm}\label{irredcomp3}
The family of the weak irreducible components of $S$ is a family of irreducible components of $S$. In addition, the family of the irreducible components of $S$ is unique.
\end{thm}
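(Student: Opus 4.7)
The plan is to verify that the family $\{S_i\}_{i\geq 1}$ of weak irreducible components of $S$ constructed in Theorem \ref{irredcomp2} already satisfies the axioms of Definition \ref{irredcomptame}, thereby giving existence; uniqueness will then follow via the uniqueness part of Theorem \ref{irredcomp2}.

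The observation that makes everything work is that every amenable $C$-semianalytic subset of $M$ contained in $S$ is automatically $S$-tame (just take $M$ itself as the ``open neighborhood of $S$''); conversely, I claim that every irreducible $S$-tame $C$-semianalytic subset $T\subset S$ is in fact amenable in $M$. To prove this claim I would use Lemma \ref{sheaf} together with the construction from the proof of Proposition \ref{neatwirred} to write $T=X\cap S$, where $X$ is a $C$-analytic subset of some open neighborhood $U$ of $S$ in $M$; the implicit reduction in the proof of Proposition \ref{neatwirred} (shrinking $M$ to an open neighborhood of $S$ in which the relevant family of $C$-analytic sets becomes locally finite and closed in the ambient) then promotes $X$ to a $C$-analytic subset of $M$, so that $T=X\cap S$ exhibits $T$ as the intersection of the amenable $C$-semianalytic set $S$ with a $C$-analytic set, hence as an amenable $C$-semianalytic subset of $M$. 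Granted this claim, conditions (1), (3) and (4) of Definition \ref{irredcomptame} coincide with their counterparts in Definition \ref{irredcompwtame}, while condition (2) of Definition \ref{irredcomptame} is weaker than its counterpart in Definition \ref{irredcompwtame} (since the class of test sets is smaller); all four conditions are therefore inherited from Theorem \ref{irredcomp2}, and the amenability of each $S_i$ is the special case $T=S_i$. Local finiteness in $M$ follows from the local finiteness of the associated family $\{X_i\}$ provided by the proof of Proposition \ref{neatwirred}.

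For uniqueness, let $\{T_k\}_{k\geq 1}$ be any family of irreducible components of $S$ in the sense of Definition \ref{irredcomptame}. Each $T_k$ is amenable in $M$, hence $S$-tame. By the claim above, any irreducible $S$-tame set $T$ with $T_k\subset T\subset S$ is amenable in $M$, and condition (2) of Definition \ref{irredcomptame} forces $T=T_k$. Thus $\{T_k\}_{k\geq 1}$ is also a family of weak irreducible components of $S$, and the uniqueness in Theorem \ref{irredcomp2} yields $\{T_k\}_{k\geq 1}=\{S_j\}_{j\geq 1}$. The main obstacle is the amenability bridge: upgrading an irreducible $S$-tame subset of $S$ from being amenable in a mere neighborhood of $S$ to being amenable in $M$ itself, for which the shrinking of $M$ inherent in Proposition \ref{neatwirred} is essential.
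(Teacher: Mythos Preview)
Your argument has a genuine gap at the ``amenability bridge.'' You assert that every irreducible $S$-tame subset $T\subset S$ is amenable in $M$, and you justify this by writing $T=X\cap S$ with $X$ a $C$-analytic subset of some open neighbourhood $U\supset S$ and then ``shrinking $M$'' so that $X$ becomes $C$-analytic in $M$. But this is circular: replacing $M$ by $U$ changes the ambient manifold, and what you obtain is precisely amenability of $T$ in $U$, which is the definition of $S$-tameness you started from. The point of Definition~\ref{irredcomptame} is amenability in the \emph{given} $M$, and a $C$-analytic subset $X$ of $U$ need not extend to a $C$-analytic subset of $M$ (near $\partial U$ there is no control). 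Proposition~\ref{neatwirred} produces $C$-analytic sets $X_i$ only in a carefully constructed neighbourhood $U$ of $S$; nothing there promotes them to $C$-analytic subsets of $M$, nor does it give local finiteness of $\{S_i\}$ in $M$ (only in $U$).

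The paper closes exactly this gap with Theorem~\ref{tamend}, whose proof is substantial: it passes to the complexification of the Zariski closure, then to its normalization, realises each weak irreducible component as the image under the (proper, invariant) normalization map of a union of connected components of a pullback, and invokes Theorem~\ref{properint-tame}(ii) to conclude amenability in $M$; local finiteness in $M$ is handled separately via Proposition~\ref{zclc} and another normalization argument. Your proposal bypasses all of this, and the shortcut does not stand. For uniqueness you again rely on the bridge claim (to upgrade an arbitrary irreducible $S$-tame $T$ with $T_k\subset T\subset S$ to an amenable set so that condition~(2) of Definition~\ref{irredcomptame} applies); the paper instead argues directly via Lemma~\ref{ni} and Corollary~\ref{nic}, without needing that stronger claim.
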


The crucial step to prove Theorem \ref{irredcomp3} is the following result.

\begin{thm}\label{tamend}
The weak irreducible components of $S$ are amenable $C$-semianalytic sets and constitute a locally finite family of $M$.
\end{thm}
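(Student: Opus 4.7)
The plan is to leverage Proposition \ref{neatwirred}, which furnishes a good local description of the weak irreducible components in an open neighborhood $U\subset M$ of $S$, and then to transfer amenability and local finiteness from $U$ to $M$ by exploiting the global structure of $S$ provided by Proposition \ref{normal2} together with the extension technique based on Lemmas \ref{neighs} and \ref{bigneigh}.

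First, I would apply Proposition \ref{neatwirred} to obtain an open neighborhood $U\subset M$ of $S$ and a locally finite (in $U$) family $\{X_i\}_{i\geq1}$ of irreducible $C$-analytic subsets of $U$ with $X_i=\ol{S_i}^{\zar}_U$ and $S_i=X_i\cap S$. Simultaneously, I would invoke Proposition \ref{normal2} to write $S=\bigcup_{k\geq1}T_k$ as a countable locally finite (in $M$) union of basic $C$-semianalytic sets with $\{\ol{T_k}^{\zar}_M\}_{k\geq1}$ locally finite in $M$ after eliminating repetitions. Combining these gives $S_i=\bigcup_{k\geq1}(X_i\cap T_k)$, a countable union of pieces that are amenable in $U$ and whose Zariski closures are contained in the $\ol{T_k}^{\zar}_M$; hence $\{\ol{X_i\cap T_k}^{\zar}_M\}_{k\geq1}$ inherits local finiteness in $M$. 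To upgrade the pieces $X_i\cap T_k$ from amenable in $U$ to amenable in $M$, I would argue as in the end of the proof of Proposition \ref{neatwirred}: shrink $U$ via Lemma \ref{neighs} so that each $X_i$ sits inside a locally finite family of open neighborhoods on which it is closed, then use Lemma \ref{bigneigh} and the standard coherent-sheaf extension (gluing the ideal of $X_i$ on its neighborhood with the trivial sheaf elsewhere) to produce a $C$-analytic subset of $M$ that coincides with $X_i$ in a neighborhood of $S_i$. Corollary \ref{lfuok2} then implies that $S_i$ is amenable in $M$.

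For local finiteness of $\{S_i\}_{i\geq1}$ in $M$, I would argue pointwise: at $x\not\in\cl(S)$ the conclusion is trivial, while at $x\in\cl(S)$ I would pick a relatively compact neighborhood $W$ of $x$ meeting only finitely many $T_k$, say $T_{k_1},\ldots,T_{k_m}$, so that any $S_i$ meeting $W$ must intersect some $T_{k_j}$. The task then reduces to showing the key finiteness claim that each $T_k$ meets only finitely many weak irreducible components $S_i$; combined with local finiteness of $\{T_k\}$ in $M$, this yields the desired local finiteness of $\{S_i\}$ in $M$. I expect this finiteness claim to be the main obstacle: although local finiteness in $S$ of $\{S_i\}$ is already known from Theorem \ref{irredcomp2}, extending it to $M$ near boundary points of $U$ inside $\cl(S)\setminus S$ requires genuine control of how irreducible components of $\ol{T_k}^{\zar}_M$ interact with the locally finite family $\{X_i\}$ in $U$. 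I would attack this via induction on $\dim(S)$, using the primary decomposition of $\ideal(S)$ in $\an(S)$ together with Lemma \ref{ni} to bound the number of minimal primes whose zero-sets meet any fixed $T_k$, and then conclude by combining this bound with the local finiteness of $\{\ol{T_k}^{\zar}_M\}_{k\geq1}$ in $M$.
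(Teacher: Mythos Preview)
Your approach diverges substantially from the paper's, which relies essentially on the normalization map and Theorem \ref{properint-tame}(ii) rather than on extending the $X_i$ to $C$-analytic subsets of $M$. There is a genuine gap in your sheaf-extension step.

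You propose to glue the ideal sheaf of $X_i$ on an open neighborhood $V_i\subset U$ with the unit sheaf on $M\setminus X_i$ to obtain a coherent sheaf on $M$ whose zero set extends $X_i$. This gluing is well defined as a sheaf on $M$ (since $X_i\subset V_i$), but it fails to be coherent at points of $\cl_M(X_i)\setminus V_i$: at such a point $x$ the stalk is $\an_{M,x}$, generated by $1$, yet every neighborhood of $x$ meets $X_i$, where the stalk is a proper ideal, so $1$ cannot generate the sheaf locally. Equivalently, the zero set of a coherent $\an_M$-sheaf of ideals is closed in $M$, whereas $X_i$ is only closed in $U$; nothing prevents $\cl_M(X_i)$ from acquiring new points on $\partial U$. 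The trick at the end of the proof of Proposition \ref{neatwirred} and in Lemma \ref{bigneigh} works precisely because it produces a \emph{smaller} open neighborhood of $S$ in which the relevant sets become closed---it does not manufacture $C$-analytic subsets of the full ambient $M$. Replacing $X_i$ by $\ol{S_i}^{\zar}_M$ (which by Proposition \ref{zclc} has the right dimension) does not close the gap either: you would then need $S_i=\ol{S_i}^{\zar}_M\cap S$, but distinct weak components $S_j$ can share the same Zariski closure in $M$, so this intersection may be strictly larger than $S_i$.

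This last phenomenon is exactly what the paper confronts head-on. For amenability it passes to the normalization $\pi:Y\to\widetilde{X}$ of a complexification, shows that $S_1$ is the image under $\pi$ of a union of connected components of $\pi^{-1}(S)\cap Y^{\widehat{\sigma}}$, and then invokes Theorem \ref{properint-tame}(ii). For local finiteness in $M$ it first uses Proposition \ref{zclc} to reduce to the subfamily $\{S_j\}_{j\in{\mathfrak F}}$ sharing a common Zariski closure $X$, and then again uses the normalization to identify these $S_j$ with images of distinct connected components of $\pi^{-1}(S)$, whose local finiteness is automatic for semianalytic sets. Your proposed induction on $\dim(S)$ for the ``key finiteness claim'' is not fleshed out, and I do not see how primary decomposition together with Lemma \ref{ni} bounds the number of $S_i$ meeting a fixed $T_k$ without already knowing local finiteness in $M$.
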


Before proving Theorem \ref{tamend} we need some preliminary work. We denote the family of the weak irreducible components of $S$ with $\{S_i\}_{i\geq1}$. 

\begin{lem}\label{neigh}
For each $i\geq1$ there exists an open $C$-semianalytic set $U$ such that $S\cap U=S_i\cap U$ is a real analytic manifold of dimension $\dim(S_i)$.
\end{lem}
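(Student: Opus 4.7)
The plan is to construct $U$ explicitly after first reducing to the situation of Proposition \ref{neatwirred}. By replacing $M$ by the neighborhood provided there, I may assume that $\{X_i:=\ol{S_i}^{\zar}\}_{i\geq 1}$ is a locally finite family of irreducible $C$-analytic subsets of $M$ whose union is the Zariski closure of $S$, with $S_i=X_i\cap S$ for each $i\geq 1$. Fix $i\geq 1$ and set $d:=\dim(S_i)=\dim(X_i)$.

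My candidate is
$$
U:=\bigl(M\setminus\cl_{X_i}(X_i\setminus S_i)\bigr)\setminus\Bigl(\Sing(X_i)\cup\bigcup_{j\neq i}X_j\Bigr).
$$
The first factor is exactly the open set appearing in the discussion of interiors in Subsection \ref{ucc}, so it is open $C$-semianalytic and satisfies $\bigl(M\setminus\cl_{X_i}(X_i\setminus S_i)\bigr)\cap X_i=\Int_{X_i}(S_i)\subset S_i$. The excised set $\Sing(X_i)\cup\bigcup_{j\neq i}X_j$ is, by local finiteness of $\{X_j\}_{j\geq 1}$, locally a finite union of $C$-analytic subsets of $M$, so its complement in $M$ is also open $C$-semianalytic; hence $U$ is open $C$-semianalytic. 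From $U\cap X_i\subset\Int_{X_i}(S_i)\subset S_i$ one derives $S_i\cap U=X_i\cap U$, and any $y\in S\cap U$ belongs to some $S_k=S\cap X_k$; the condition $U\cap X_k=\varnothing$ for $k\neq i$ forces $k=i$, so $S\cap U=S_i\cap U=X_i\cap U$. Since $U\cap\Sing(X_i)=\varnothing$, this intersection is open in $\Reg(X_i)$, and therefore it is a real analytic manifold of pure dimension $d$.

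The step that needs the most care is verifying that $U$ is non-empty, so that $S\cap U$ genuinely attains dimension $d$. First I would check that $\Int_{X_i}(S_i)\cap\Reg(X_i)\neq\varnothing$: indeed, $S_i\cap\Reg(X_i)$ is a $C$-semianalytic subset of dimension $d$ of the $d$-dimensional real analytic manifold $\Reg(X_i)$, so it must have non-empty interior there. From this non-empty open subset of $\Reg(X_i)$ I would then remove the trace $\bigcup_{j\neq i}(X_i\cap X_j)$, whose pieces have dimension strictly less than $d$ by irreducibility of $X_i$ and form a locally finite family; a surviving point lies in $U\cap X_i$, and the lemma follows.
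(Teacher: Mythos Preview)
Your proof is correct and there is no circularity: Proposition \ref{neatwirred} relies only on Theorem \ref{irredcomp2} and Lemmas \ref{sheaf}, \ref{neighs}, \ref{bigneigh}, none of which invoke Lemma \ref{neigh}. The dimension arguments in the final paragraph are sound: $S_i\cap\Reg(X_i)$ has dimension $d$ because $\dim(\Sing(X_i))<d$, and for $j\neq i$ the intersection $X_i\cap X_j$ is a proper $C$-analytic subset of the irreducible $X_i$, hence of dimension $<d$; local finiteness of $\{X_j\}_{j\geq1}$ makes $\bigcup_{j\neq i}(X_i\cap X_j)$ a $C$-analytic set of dimension $<d$, which cannot cover a non-empty open piece of the $d$-manifold $\Reg(X_i)$.

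Your route differs from the paper's. The paper stays closer to the algebraic definition of weak irreducible components: it invokes Corollary \ref{nic} to get $S_i\not\subset\bigcup_{j\neq i}S_j$, produces via Lemma \ref{sheaf} a single analytic function $h$ on a neighborhood of $S$ with $\ceros(h)\cap S=\bigcup_{j\neq i}S_j$, and then uses Lemma \ref{charirred} to force $\dim(\ceros(h)\cap S_i)<d$. You instead pass through Proposition \ref{neatwirred} to identify $S_j=X_j\cap S$ with the $X_j$'s already the irreducible components of the Zariski closure, and then argue purely with the geometry of the $X_j$'s. Your approach yields an explicit global formula for $U$, while the paper simply picks a small $C$-semianalytic ball around a single well-chosen point; both reach the same endpoint.

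One small point worth tightening: after ``replacing $M$'' by the neighbourhood $V$ from Proposition \ref{neatwirred}, your $U$ is open $C$-semianalytic in $V$, not a priori in the original $M$. Since the lemma asks for $U$ in $M$, you should close the loop: pick any $x\in U\cap X_i$ and take a small open $C$-semianalytic neighbourhood $U'\subset U$ of $x$ in the original $M$ (e.g.\ a coordinate ball); then $S\cap U'=S_i\cap U'=X_i\cap U'$ inherits the required properties. This is exactly how the paper finishes, and it costs one sentence.
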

\begin{proof}
For simplicity we prove the result for $S_1$. By Corollary \ref{nic} $S_1\not\subset\bigcup_{j>1}S_j$. Let $V\subset M$ be an open neighborhood of $S$ such that $S_1$ is an amenable $C$-semianalytic subset of $V$ and let $X_1$ be the Zariski closure of $S_1$ in $V$. By Lemma \ref{sheaf} we may assume after shrinking $V$ that there exists $h\in\bigcap_{j>1}\ideal(S_1,S)\cap\an(V)$ such that $\ceros(h)\cap S=\bigcup_{j>1}\ceros(\ideal(S_1,S))$. As $S_1$ is irreducible and $S_1\not\subset\bigcup_{j>1}S_j$, we deduce by Lemma \ref{charirred} that $\dim(\ceros(h)\cap S_1)<\dim(S_1)$. Pick a point $x\in\Int_{\Reg(X_1)}(S_1\cap\Reg(X_1)\setminus\bigcup_{j>1}S_j)$ and let $U$ be an open $C$-semianalytic neighborhood of $x$ in $M$ such that 
$$
S\cap U=\Int_{\Reg(X_1)}\Big(S_1\cap\Reg(X_1)\setminus\bigcup_{j>1}S_j\Big)\cap U.
$$
Observe that $U$ satisfies the conditions in the statement.
\end{proof}

\begin{prop}\label{zclc}
The equality $\dim(\ol{S_i}^{\zar})=\dim(S_i)$ holds for $i\geq1$ and the family $\{\ol{S_i}^{\zar}\}_{i\geq1}$ is locally finite after eliminating repetitions.
\end{prop}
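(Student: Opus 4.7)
The plan is to deduce both claims from Proposition~\ref{neatwirred}, which furnishes an open neighborhood $U\subset M$ of $S$ and irreducible $C$-analytic subsets $X_i:=\ol{S_i}^{\zar}_U\subset U$ with $X_i\cap S=S_i$; its proof additionally places the $X_i$ inside a locally finite family $\{U_i'\}_{i\geq 1}$ of open subsets of $M$ with $X_i\subset U_i'$.

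For the dimension equality, the set $S_i$ is, by the definition of $S$-tame, an amenable $C$-semianalytic subset of $U$, so the defining property of amenable $C$-semianalytic sets (Zariski closure preserves dimension) yields $\dim(X_i)=\dim(S_i)$. To conclude the same equality for the Zariski closure in $M$, I would use the irreducibility of $S_i$ (Theorem~\ref{irredcomp2}) to show that $\ideal(S_i,M)\subset\an(M)$ is a prime ideal: the restriction homomorphism $\an(M)\to\an(S_i)$ has kernel $\ideal(S_i,M)$ and takes values in the integral domain $\an(S_i)$. Consequently $\ol{S_i}^{\zar}_M=\ceros(\ideal(S_i,M))$ is an irreducible $C$-analytic subset of $M$ and hence pure dimensional. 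Combined with the inclusion $X_i\subset\ol{S_i}^{\zar}_M\cap U$ and the minimality of $X_i$ among $C$-analytic subsets of $U$ containing $S_i$, this forces $\dim(\ol{S_i}^{\zar}_M)=\dim(X_i)=\dim(S_i)$.

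For the local finiteness of $\{\ol{S_i}^{\zar}\}_{i\geq 1}$ in $M$ after eliminating repetitions, the containments $X_i\subset U_i'$ with $\{U_i'\}_{i\geq 1}$ locally finite in $M$ immediately imply that every compact subset of $M$ meets only finitely many $X_i$. Repetitions may genuinely occur, for example when two distinct weak irreducible components $S_i, S_j$ are disjoint open pieces of the same irreducible $C$-analytic set, but passing to a subfamily without repetitions preserves local finiteness.

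The most delicate point is the comparison between the Zariski closure in $U$ and the one in $M$: \emph{a priori} analytic functions on $M$ might be more restrictive than analytic functions on $U$, so $\ol{S_i}^{\zar}_M$ could have strictly larger dimension than $X_i$. The bridge is supplied by the pure dimensionality of $\ol{S_i}^{\zar}_M$ (derived from primality of $\ideal(S_i,M)$) combined with the minimality of $X_i$ as a $C$-analytic subset of $U$ containing $S_i$, which together prevent $\ol{S_i}^{\zar}_M$ from exceeding the dimension of $X_i$.
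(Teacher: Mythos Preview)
Your approach has a genuine gap: the statement concerns the Zariski closures $\ol{S_i}^{\zar}$ taken in $M$, whereas Proposition~\ref{neatwirred} only controls the Zariski closures $X_i=\ol{S_i}^{\zar}_U$ in a smaller neighborhood $U$ of $S$. Your bridging argument does not close this gap. From the minimality of $X_i$ among $C$-analytic subsets of $U$ containing $S_i$ you obtain $X_i\subset\ol{S_i}^{\zar}_M\cap U$, which yields only the \emph{lower} bound $\dim(S_i)=\dim(X_i)\le\dim(\ol{S_i}^{\zar}_M)$. Pure dimensionality of $\ol{S_i}^{\zar}_M$ adds nothing toward an upper bound: an irreducible pure-dimensional $C$-analytic set can certainly contain a strictly lower-dimensional irreducible $C$-analytic subset. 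What you would need is a $C$-analytic subset of $M$ (not merely of $U$) of dimension $\dim(S_i)$ containing $S_i$, and Proposition~\ref{neatwirred} does not furnish one; the sets $X_i$ are only closed in $U$. At this point in the paper one cannot invoke amenability of $S_i$ in $M$ either, since that is precisely Theorem~\ref{tamend}, whose proof relies on Proposition~\ref{zclc}.

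The same problem infects your local finiteness argument: you establish that $\{X_i\}_{i\ge1}$ is locally finite in $M$ (via $X_i\subset U_i'$), but the sets $\ol{S_i}^{\zar}_M$ may be strictly larger than $X_i$ and need not lie in the $U_i'$. The paper's proof avoids both difficulties by using the amenability algorithm (Theorem~\ref{algor1}) to write $S=\bigcup_{k=1}^r T_k(S)$ with each $T_k(S)$ amenable \emph{in $M$}; it then shows via Lemma~\ref{neigh} and Lemma~\ref{charirred} that each $\ol{S_i}^{\zar}_M$ coincides with an irreducible component of some $\ol{T_k(S)}^{\zar}_M$. Since $T_k(S)$ is amenable in $M$, one gets $\dim(\ol{T_k(S)}^{\zar}_M)=\dim(T_k(S))=\dim(S_i)$ directly, and local finiteness follows because there are only finitely many $k$ and the irreducible components of each $\ol{T_k(S)}^{\zar}_M$ form a locally finite family.
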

\begin{proof}
As $S$ is amenable, there exists by Theorem \ref{algor1} and \ref{clear} (vi) and index $r\geq1$ such that $S=\bigcup_{k=1}^rT_k(S)$ and $\dim(T_{k+1}(S))<\dim(T_k(S))$. Recall that each $T_k(S)$ is a real analytic manifold. For each $i\geq1$ there exists by Lemma \ref{neigh} an open $C$-semianalytic set $U_i$ such that $S\cap U_i=S_i\cap U_i$ is a real analytic manifold of dimension $\dim(S_i)$. As $S\cap U_i=\bigcup_{k=1}^rT_k(S)\cap U_i$, there exists $1\leq k\leq r$ such that $\dim(T_k(S))=\dim(T_k(S)\cap U_i)=\dim(S\cap U_i)=\dim(S_i)$. By Lemma \ref{charirred}(iii) $\ol{S_i}^{\zar}=\ol{S\cap U_i}^{\zar}$ is an irreducible component of $\ol{T_k(S)}^{\zar}$. As $S\cap U_i$ is an amenable $C$-semianalytic set, $\dim(\ol{S_i}^{\zar})=\dim(\ol{S\cap U_i}^{\zar})=\dim(S\cap U_i)=\dim(S_i)$.

As the family $\{\ol{T_k(S)}^{\zar}\}_{k=1}^r$ is finite and the irreducible components of each $\ol{T_k(S)}^{\zar}$ constitute a locally finite family, we conclude that the family $\{\ol{S_i}^{\zar}\}_{i\geq1}$ is locally finite after eliminating repetitions.
\end{proof}

\begin{cor}
We have:
\begin{itemize}
\item[(i)] Let $U\subset M$ be an open neighborhood of $S$ and let $\{X_j\}_{j\geq1}$ be the irreducible components of the $C$-analytic set $X:=\ol{S}^{\zar}_U$. Then for each $j\geq1$ there exists $i\geq1$ such that $X_j=\ol{S_i}^{\zar}_U$ and $\I(X_j,X)=\I(S_i,S)\cap\an(X)$.
\item[(ii)] There exist an open neighborhood $V\subset M$ of $S$ such that if $X_i:=\ol{S_i}^{\zar}_V$, then $\{X_i\}_{i\geq1}$ is the family of the irreducible components of $X:=\ol{S}^{\zar}_V$ and $\I(X_i,X)=\I(S_i,S)\cap\an(X)$ for $i\geq1$.
\end{itemize}
\end{cor}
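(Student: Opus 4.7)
The plan is to treat (i) as the main content and obtain (ii) as an immediate upgrade via Proposition~\ref{neatwirred}. For (i), after fixing $U$, I would first observe that
$$
\ol{S}^{\zar}_U=\bigcup_{i\geq 1}\ol{S_i}^{\zar}_U.
$$
The inclusion $\supset$ is trivial since each $S_i\subset S$. For $\subset$, Proposition~\ref{zclc} tells us that (after eliminating repetitions) $\{\ol{S_i}^{\zar}_U\}_{i\geq1}$ is a locally finite family of $C$-analytic subsets of $U$; hence their union is a $C$-analytic subset of $U$ containing $S$, so it contains $X=\ol{S}^{\zar}_U$. Each $S_i$ is irreducible (Theorem~\ref{irredcomp2}), so its Zariski closure $\ol{S_i}^{\zar}_U$ is irreducible as a $C$-analytic set (by \ref{bpci}(ii)).

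Next, given an irreducible component $X_j$ of $X$, it is an irreducible $C$-analytic subset of the locally finite union $\bigcup_i\ol{S_i}^{\zar}_U$. By the standard fact that an irreducible $C$-analytic set contained in a locally finite union of $C$-analytic sets is contained in one member (an application of Lemma~\ref{intpri} to the defining ideals, exactly as in Lemma~\ref{ni}), there exists $i\geq 1$ with $X_j\subset\ol{S_i}^{\zar}_U$. The same argument applied to the irreducible $\ol{S_i}^{\zar}_U\subset X=\bigcup_k X_k$ yields an index $k$ with $\ol{S_i}^{\zar}_U\subset X_k$; chaining, $X_j\subset\ol{S_i}^{\zar}_U\subset X_k$, and the maximality of irreducible components forces $X_j=X_k=\ol{S_i}^{\zar}_U$. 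For the ideal identity, the restriction $\an(X)\to\an(S),\,f\mapsto f|_S$ is injective (so $\an(X)$ sits naturally inside $\an(S)$), and since $X_j=\ol{S_i}^{\zar}_U$ we have that $S_i$ is Zariski-dense in $X_j$; thus for $f\in\an(X)$,
$$
f|_{X_j}\equiv 0\iff f|_{S_i}\equiv 0,
$$
which is exactly $\I(X_j,X)=\I(S_i,S)\cap\an(X)$.

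For (ii), Proposition~\ref{neatwirred} provides an open neighborhood $V\subset M$ of $S$ such that, with $X:=\ol{S}^{\zar}_V$ and $X_i:=\ol{S_i}^{\zar}_V$, the family $\{X_i\}_{i\geq1}$ coincides with the irreducible components of $X$ and $X_i\cap S=S_i$. Applying (i) to this $V$ gives the ideal equality $\I(X_i,X)=\I(S_i,S)\cap\an(X)$ for every $i\geq1$.

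The main obstacle I expect is ensuring that $\{\ol{S_i}^{\zar}_U\}_{i\geq1}$ really is locally finite in $U$; this is precisely the content of Proposition~\ref{zclc} and is what makes the Zariski closure distribute over the union. Once that is in hand, everything else reduces to classical irreducibility bookkeeping for $C$-analytic sets plus the Zariski-density identification of vanishing ideals.
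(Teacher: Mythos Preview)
Your proposal is correct and follows essentially the same route as the paper: use Proposition~\ref{zclc} to obtain a locally finite family $\{\ol{S_i}^{\zar}_U\}$ whose union is a $C$-analytic set containing $S$ (hence equal to $X$), then invoke Lemma~\ref{ni} to trap each $X_j$ inside some $\ol{S_i}^{\zar}_U$ and conclude by maximality, with the ideal identity coming from Zariski density; part (ii) is exactly the application of (i) to the neighborhood from Proposition~\ref{neatwirred}. The only cosmetic difference is that the paper gets $X_j=\ol{S_i}^{\zar}_U$ in one step (maximality of $X_j$ among irreducible $C$-analytic subsets of $X$, since $\ol{S_i}^{\zar}_U$ is itself irreducible) rather than your two-step chain through an auxiliary $X_k$.
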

\begin{proof}
(i) For simplicity consider $U=M$. By Proposition \ref{zclc} $\dim(\ol{S_i}^{\zar})=\dim(S_i)$ holds and the family $\{\ol{S_i}^{\zar}\}_{i\geq1}$ is locally finite after eliminating repetitions. Thus, $Z:=\bigcup_{i\geq1}\ol{S_i}^{\zar}\subset X$ is a $C$-analytic set that contains $S$, so $Z=X$. By Lemma \ref{ni} there exists $i\geq1$ such that $X_j\subset\ol{S_i}^{\zar}\subset X$, so $X_j=\ol{S_i}^{\zar}$. Consequently
$$
\I(S_i,S)\cap\an(X)=\{f\in\an(X):\ f|_{S_i}=0\}=\{f\in\an(X):\ f|_{\ol{S_i}^{\zar}}=0\}=\I(X_j,X).
$$ 

(ii) Apply (i) to the open neighborhood $V=U$ of $S$ constructed in Proposition \ref{neatwirred}.
\end{proof}

We are ready to prove Theorem \ref{tamend}.

\begin{proof}[Proof of Theorem \em \ref{tamend}]
We divide the proof in two parts:

\noindent{\bf Part 1.} {\em Amenability in $M$ of the weak irreducible components of $S$.} 
Let $S_1$ be a weak irreducible component of $S$ and let us show that $S_1$ is an amenable $C$-semianalytic set. The proof of this part is conducted in several steps:

\paragraph{}\label{reds1} 
Let $X$ be the Zariski closure of $S_1$. We prove next that we may assume from the beginning: \em $X=\ol{S}^{\zar}$. In particular, the Zariski closure of $S$ is irreducible\em. To that end, we show: \em $S_1$ is a weak irreducible component of the amenable $C$-semi\-analytic set $S':=S\cap X$\em. Once this is done we substitute $S$ by $S'$.

Let $\{S'_k\}_{k\geq1}$ be the family of weak irreducible components of $S'$. By Lemma \ref{ni}, there exists $k\geq1$ such that $S_1\subset S'_k$. We claim: \em $S'_k$ is an $S$-tame $C$-semianalytic set\em. 

It is enough to check that $S'_k=\ceros(h)$ for some analytic function $h$ on an open neighborhood of $S$. Let $h_0$ be an analytic function on an open neighborhood $W$ of $S'$ such that $S'_k=S'\cap\ceros(h_0)$. As $X$ is a $C$-analytic subset of $M$, it holds that $X\cap W$ is a $C$-analytic subset of $M\setminus(X\setminus W)$. By Cartan's Theorem B $h_0^2|_{X\cap W}$ is the restriction to $X\cap W$ of an analytic function $h$ on $M\setminus(X\setminus W)$ such that $\ceros(h)=\ceros(h_0)\cap X\cap W$. Observe that $S\subset M\setminus(X\setminus W)$. Consequently, $S'_k=S\cap\ceros(h)$ is a $S$-tame $C$-semianalytic set. 

By Lemma \ref{ni} there exists a weak irreducible component $S_j$ of $S$ such that $S_1\subset S'_k\subset S_j$ and we conclude $S'_k=S_1$.

\paragraph{}Let $\widetilde{X}$ be an irreducible complexification of the irreducible $C$-analytic set $X$. Recall that if $\Omega$ is an open subset of $\widetilde{X}$, then the irreducible components of $\Omega$ are all pure dimensional and coincide with the closures of the connected components of the complex analytic manifold $\Omega\setminus\Sing(\widetilde{X})$. Let $(\widetilde{X}^\R,\an_{\widetilde{X}}^\R)$ be the underlying real analytic structure of $(\widetilde{X},\an_{\widetilde{X}})$. Notice that $\Reg(\widetilde{X}^\R)=\Reg(\widetilde{X})$ and the irreducible components of $\Omega^\R$ arise as the underlying real structures of the irreducible components of $\Omega$.

\paragraph{}Let $(Z,\an_Z)$ be a Stein complexification of $(\widetilde{X}^\R,\an_{\widetilde{X}}^\R)$ and let $\sigma:(Z,\an_Z)\to(Z,\an_Z)$ be an anti-involution whose fixed locus is $\widetilde{X}^\R$. Recall that $\Sing(\widetilde{X}^\R)=\Sing(Z)\cap \widetilde{X}^\R$. Denote the reduction of $(Z,\an_Z)$ with $(Z_1,\an_{Z_1}):=(Z,\an_Z^r)$. Observe that $\sigma$ induces an anti-involution on $(Z_1,\an_{Z_1})$ whose fixed part space $(Z^\sigma_1,\an_{Z^\sigma_1})$ satisfies $Z^\sigma_1=\widetilde{X}^\R$ and $\an_{Z^\sigma_1}$ is a quotient (coherent) sheaf of $\an_{\widetilde{X}}^\R$. For each $z\in\widetilde{X}^\R$ it holds $\an_{Z^\sigma_1,z}\cong\an_{\widetilde{X},z}^\R/\gtn(\an_{\widetilde{X},z}^\R)$ where $\gtn(\an_{\widetilde{X},z}^\R)$ is the ideal of nilpotents elements of $\an_{\widetilde{X},z}^\R$. By \cite[V.\S3]{gare} $(Z_1,\an_{Z_1})$ is a Stein space.

Let $\pi:Y\to Z_1$ be the normalization of $(Z_1,\an_{Z_1})$. As $(Z_1,\an_{Z_1})$ is Stein, $(Y,\an_Y)$ is by \cite{n3} Stein. The anti-involution on $Z_1$ extends to an anti-involution $\widehat{\sigma}$ on $Y$ such that $\pi\circ\widehat{\sigma}=\sigma\circ\pi$, see \cite[IV.3.10]{gmt}. Denote the set of fixed points of $\widehat{\sigma}$ with $Y^{\widehat{\sigma}}:=\{y\in Y:\ \widehat{\sigma}(y)=y\}$. As the restriction $\pi|:Y\setminus\pi^{-1}(\Sing(Z_1))\to Z_1\setminus\Sing(Z_1)$ is an invariant holomorphic diffeomorphism, 
$$
\pi(Y^{\widehat{\sigma}}\setminus\pi^{-1}(\Sing(Z_1))=Z_1^{\sigma}\setminus\Sing(Z_1)=\Reg(\widetilde{X}^\R).
$$
As $\pi$ is proper, $\pi(Y^{\widehat{\sigma}})=\widetilde{X}^\R$. Note that 
$\pi^{-1}(S)\cap Y^{\widehat{\sigma}}$ is an amenable $C$-semianalytic subset of $Y^{\widehat{\sigma}}$. 

\paragraph{}By Proposition \ref{neatwirred} there exists an open neighborhood $V\subset M$ of $S$ such that if $T:=\ol{S}^{\zar}_V$ and $\{T_i\}_{i\geq1}$ are the irreducible components of $T$, we may assume $S_i=T_i\cap S$ for $i\geq1$. Observe that $\dim(T_1)=\dim(S_1)=\dim(X)$, so $T_1$ is an irreducible component of $X\cap V$.

\paragraph{} Let $\Omega$ be an open neighborhood of $X\cap V$ in $\widetilde{X}$ such that $\Omega\cap X=V\cap X$ and for each irreducible component $X'$ of $X\cap V$ there exists an irreducible component $\Omega'$ of $\Omega$ such that $\Omega'\cap X=X'$ (see \cite[\S8. Prop. 11]{wb}). As $S\subset\Omega$, it holds $\pi^{-1}(S)\subset\pi^{-1}(\Omega)$. Let $\Omega_1$ be the irreducible component of $\Omega$ such that $\Omega_1\cap X=T_1$. Recall that $\Omega_1^\R$ is an irreducible component of $\Omega^\R$. Let $\Theta$ be an open neighborhood of $\Omega$ in $Z_1$ such that $\Theta\cap\widetilde{X}^\R=\Omega^\R$ and for each irreducible component ${\Omega'}^\R$ of $\Omega^\R$ there exists an irreducible component $\Theta'$ of $\Theta$ such that $\Theta'\cap\widetilde{X}^\R={\Omega'}^\R$. Let $\Theta_1$ be the irreducible component of $\Theta$ such that $\Theta_1\cap\widetilde{X}^\R=\Omega_1^\R$. Let $Y_1'$ be the connected component of $Y':=\pi^{-1}(\Theta)$ such that $\pi(Y_1')=\Theta_1$ (see Remark \ref{norm1}). 

\paragraph{} As $S\subset X\cap V\subset\Omega\subset\Theta$, we have $\pi^{-1}(S)\subset\pi^{-1}(\Theta)=Y'$. As $Y_1'$ is a connected component of $Y'$, the intersection $R_1:=Y_1'\cap\pi^{-1}(S)\cap Y^{\widehat{\sigma}}$ is an open and closed subset of $R:=\pi^{-1}(S)\cap Y^{\widehat{\sigma}}$, so $R_1$ is a union of connected components of $R$. We claim: $\pi(R_1)=S_1$.

Indeed, $\Omega_1^\R$ is the closure of the connected component $\Omega_1^\R\setminus\Sing(Z_1)$ of $\Omega^\R\setminus\Sing(Z_1)$. As the restriction $\pi|:Y\setminus\pi^{-1}(\Sing(Z_1))\to Z_1\setminus\Sing(Z_1)$ is an invariant holomorphic diffeomorphism and $\Omega_1^\R=\Theta_1\cap\widetilde{X}^\R$, we conclude 
$$
\pi((Y_1'\setminus\pi^{-1}(\Sing(Z_1)))\cap Y^{\widehat{\sigma}})=(\Theta_1\setminus\Sing(Z_1))\cap Z_1^\sigma=\Omega_1^\R\setminus\Sing(Z_1).
$$
As $\pi$ is proper, $\pi(Y_1'\cap Y^{\widehat{\sigma}})=\Omega_1^\R$. Thus,
$$
\pi(R_1)=\pi(Y_1'\cap Y^{\widehat{\sigma}}\cap\pi^{-1}(S))=\pi(Y_1'\cap Y^{\widehat{\sigma}})\cap S
=\Omega_1^\R\cap S=\Omega_1^\R\cap X\cap S=T_1\cap S=S_1. 
$$

\paragraph{} As $R_1$ is a union of connected components of $R$, we deduce by Theorem \ref{properint-tame}(ii) that $S_1=\pi(R_1)$ is an amenable $C$-semianalytic subset of $Z_1^\sigma$. By \cite[II.4.10]{gmt} $(X,\an_X)$ is a closed subspace of $(\widetilde{X}^\R,\an_{\widetilde{X}}^\R)$. As $\an_{X,x}$ contains no nilpotent element for each $x\in X$ (recall that we have considered on $X$ the well-reduced structure, see \ref{wrs}), it holds that $(X,\an_X)$ is a closed subspace of $(Z_1^\sigma,\an_{Z_1^\sigma})$. Consequently, $S_1$ is an amenable $C$-analytic subset of $X$. As $X\subset M$ is a $C$-analytic set, $S_1$ is by Cartan's Theorem B an amenable $C$-analytic subset of $M$, as required.

\noindent{\bf Part 2.} {\em Local finiteness in $M$ of the family of the weak irreducible components of $S$.} By Proposition \ref{zclc} it is enough to prove the following: \em Let $X:=\ol{S_i}^{\zar}$ for some $i\geq1$ and let ${\mathfrak F}:=\{j\geq1:\ \ol{S_j}^{\zar}=X\}$. Then $\{S_j\}_{j\in{\mathfrak F}}$ is locally finite in $M$\em.

We may assume that the Zariski closure of $S$ is $X$ (see \ref{reds1}). Let $(\widetilde{X},\an_{\widetilde{X}})$ be a complexification of $(X,\an_X)$ an let $(Y,\pi)$ be its normalization. Let $V\subset M$ be an open neighborhood of $S$ such that the irreducible components $\{T_i\}_{i\geq1}$ of $T:=\ol{S}^{\zar}_V$ satisfy $S_i=S\cap T_i$ (use Proposition \ref{neatwirred}). The family $\{T_j\}_{j\in{\mathfrak F}}$ is a collection of irreducible components of $X\cap V$ of its same dimension. Let $\Omega$ be an open neighborhood of $X\cap V$ in $\widetilde{X}$ such that $\Omega\cap X=X\cap V$ and for each irreducible component $X'$ of $X\cap V$ there exists an irreducible component $\Omega'$ of $\Omega$ such that $\Omega'\cap X=X'$. Let $\Omega_j$ be the irreducible component of $\Omega$ such that $\Omega_j\cap V=T_j$ for $j\in{\mathfrak F}$. 

By Remark \ref{norm1} $(\Theta:=\pi^{-1}(\Omega),\pi|_{\Theta})$ is the normalization of $\Omega$ and for each $j\in{\mathfrak F}$ there exists a connected component $\Theta_j$ of $\Theta$ such that $\pi(\Theta_j)=\Omega_j$ and $(\Theta_j,\pi|_{\Theta_j})$ is the normalization of $\Omega_j$. As $\pi^{-1}(S)\subset\Theta$, the intersection $\pi^{-1}(S)\cap\Theta_j$ is a union of connected components of $\pi^{-1}(S)$. We claim: \em $\pi^{-1}(S)\cap\Theta_j=\pi^{-1}(S_j)\cap\Theta_j$. In particular, each connected component of $\pi^{-1}(S_j)\cap\Theta_j$ is a connected component of $\pi^{-1}(S)$.\em

As $\pi(\Theta_j)=\Omega_j$ and $\Omega_j\cap S=\Omega_j\cap X\cap S=T_j\cap S=S_j$, we deduce 
$$
\pi^{-1}(S_j)\cap\Theta_j=\pi^{-1}(\Omega_j)\cap\pi^{-1}(S)\cap\Theta_j=\pi^{-1}(S)\cap\Theta_j.
$$

Fix $j\in{\mathfrak F}$. As $X$ is the Zariski closure of $S_j$ in $M$, there exists by Theorem \ref{dpm}(i) a connected component $R_j$ of $\pi^{-1}(S_j)$ such that $\pi(R_j)=S_j$. As $S_j\subset\Omega_j$, we deduce that $R_j\subset\Theta_j$, so $R_j$ is a connected component of $\pi^{-1}(S_j)\cap\Theta_j$. Consequently, $R_j$ is a connected component of $\pi^{-1}(S)$. As $\pi^{-1}(S)$ is a semianalytic subset of the underlying real analytic structure $(Y^\R,\an_Y^\R)$ of $(Y,\an_Y)$, the family of its connected components is by \cite[2.7]{bm} locally finite. Consequently, the family $\{R_j\}_{j\in{\mathfrak F}}$ is locally finite. By Lemma \ref{proylc} the family $\{S_j=\pi(R_j)\}_{j\in{\mathfrak F}}$ is locally finite in $X$, so it is locally finite in $M$, as required.
\end{proof}

\begin{proof}[Proof of Theorem \em \ref{irredcomp3}]
By Definition \ref{irredcompwtame} and Theorem \ref{tamend} the family of weak irreducible components of $S$ is a family of irreducible components of $S$. Let us prove next that the family of irreducible components of $S$ is unique.

Let $\{S_i\}_{i\geq1}$ be the family of the weak irreducible components of $S$ and let $\{T_j\}_{j\geq1}$ be a family of irreducible components of $S$ satisfying the conditions of Definition \ref{irredcomptame}. By Lemma \ref{ni} there exists an index $i\geq1$ such that $T_j\subset S_i$ for each $j\geq1$. By condition (2) in Definition \ref{irredcomptame} we have $T_j=S_i$. By Corollary \ref{nic} we conclude $\{T_j\}_{j\geq1}=\{S_i\}_{i\geq1}$, so the family of irreducible components of $S$ is unique.
\end{proof}

\begin{example}\label{nonpure}
The irreducible components of a pure dimensional amenable $C$-semianalytic set need not to be pure dimensional. Let $S:=T_1\cup T_2\cup T_3\subset\R^3$ where
\begin{multline*}
T_1:=[-1,1]\times[-2,2]\times\{0\},\quad T_2:=[-2,-1]\times\{-1,1\}\times[-1,1],\\
\&\quad T_3:=[1,2]\times\{-1,1\}\times[-1,1]
\end{multline*}
By Proposition \ref{neatwirred} and Theorem \ref{irredcomp3} the irreducible components of $X$ are the intersections $S_1:=S\cap\{x_3=0\}$, $S_2:=S\cap\{x_2=1\}$ and $S_3:=S\cap\{x_2=-1\}$ and none of them is pure dimensional.
\end{example}

\begin{figure}[ht]
\centering
\begin{tikzpicture}[x=.275cm,y=.275cm]
%\draw[thin,color=black,step=.3cm,dashed] (0,0) grid (54,16);

\draw (10,6) -- (10,14) -- (13,16) -- (13,8) -- (10,6);
\draw[fill=black!40!white] (10,6) -- (10,14) -- (13,16) -- (13,8) -- (10,6);

\draw (18,6) -- (18,14) -- (21,16) -- (21,8) -- (18,6);
\draw[fill=black!40!white] (18,6) -- (18,14) -- (21,16) -- (21,8) -- (18,6);

\draw (0,6) -- (6,10) -- (22,10) -- (16,6) -- (0,6);
\draw[fill=black!20!white] (0,6) -- (6,10) -- (22,10) -- (16,6) -- (0,6);

\draw (1,0) -- (1,8) -- (4,10) -- (4,2) -- (1,0);
\draw[fill=black!40!white] (1,0) -- (1,8) -- (4,10) -- (4,2) -- (1,0);

\draw (9,0) -- (9,8) -- (12,10) -- (12,2) -- (9,0);
\draw[fill=black!40!white] (9,0) -- (9,8) -- (12,10) -- (12,2) -- (9,0);

\draw[dashed] (1,4) -- (13,12);
\draw[dashed] (9,4) -- (21,12);

%%%%%%%%%%%%%

\draw[dashed] (29,4) -- (41,12);
\draw[dashed] (37,4) -- (49,12);
\draw[thick] (29,4) -- (32,6);
\draw[thick] (38,10) -- (41,12);
\draw[thick] (37,4) -- (40,6);
\draw[thick] (46,10) -- (49,12);

\draw (33,8) -- (33,16) -- (36,18) -- (36,10) -- (33,8);
\draw[fill=black!40!white] (33,8) -- (33,16) -- (36,18) -- (36,10) -- (33,8);
\draw[thick] (27,8) -- (33,12);

\draw (51,8) -- (51,16) -- (54,18) -- (54,10) -- (51,8);
\draw[fill=black!40!white] (51,8) -- (51,16) -- (54,18) -- (54,10) -- (51,8);

\draw (28,6) -- (34,10) -- (50,10) -- (44,6) -- (28,6);
\draw[fill=black!20!white] (28,6) -- (34,10) -- (50,10) -- (44,6) -- (28,6);

\draw[thick] (29,4) -- (32,6);
\draw[thick] (38,10) -- (41,12);
\draw[dashed] (29,4) -- (41,12);
\draw[dashed] (37,4) -- (49,12);

\draw (24,2) -- (24,10) -- (27,12) -- (27,4) -- (24,2);
\draw[fill=black!40!white] (24,2) -- (24,10) -- (27,12) -- (27,4) -- (24,2);

\draw (42,2) -- (42,10) -- (45,12) -- (45,4) -- (42,2);
\draw[fill=black!40!white] (42,2) -- (42,10) -- (45,12) -- (45,4) -- (42,2);
\draw[thick] (45,8) -- (51,12);

%%%%%%%%%%%%

\draw (2,14) node{$S$};
\draw (17,8) node{$T_1$};
\draw (16,12) node{$T_2$};
\draw (6,4) node{$T_3$};

\draw (39,8) node{$S_1$};
\draw (48,6) node{$S_2$};
\draw (30,12) node{$S_3$};

\draw[dashed,->,thick] (30.5,9.5) arc (180:270:0.5cm) -- (34,8);
\draw[dashed,->,thick] (47,9.5) arc (0:-90:0.5cm) -- (44,8);

\end{tikzpicture}
\caption{Irreducible components $S_1$, $S_2$ and $S_3$ of $S$ (Example \ref{nonpure})}
\end{figure}
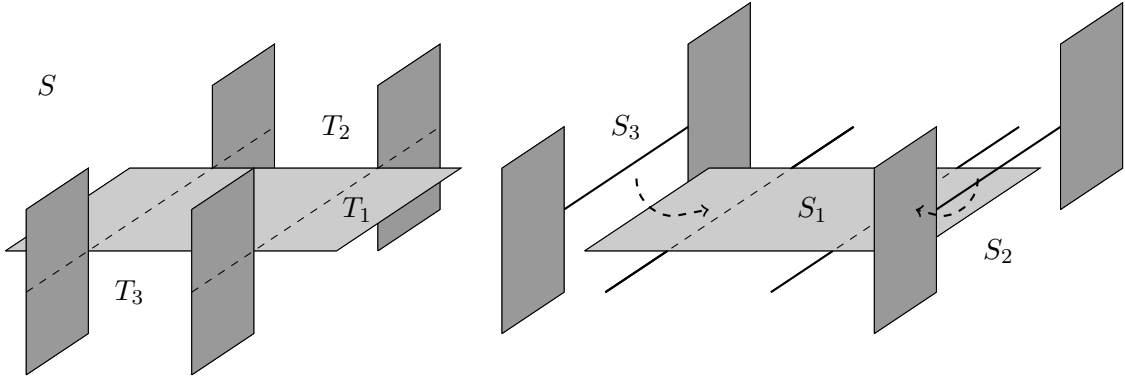

\subsection{Irreducible components vs connected components}
Let $S\subset M$ be an amenable $C$-semianalytic set and let $\{S_i\}_{i\geq1}$ be the family of its irreducible components. Let $X$ be the Zariski closure of $S$ and let $(\widetilde{X},\sigma)$ be a complexification of $X$ together with the anti-involution $\sigma:\widetilde{X}\to\widetilde{X}$ whose set of fixed points is $X$. Let $(Y,\pi)$ be the normalization of $\widetilde{X}$ and let $\widehat{\sigma}:Y\to Y$ be the anti-holomorphic involution induced by $\sigma$ in $Y$, which satisifies $\pi\circ\widehat{\sigma}=\sigma\circ\pi$. We study next \em if the irreducible components of $S$ can be computed as the images of some of the connected components of $\pi^{-1}(S)$\em. The following example shows that this is not true in general. However, we show in Proposition \ref{dpm2} that under certain conditions (achieved in Proposition \ref{neatwirred}), the result is true.

\begin{example}
Consider the amenable $C$-semianalytic subset $S:=S_0\cup S_1\cup S_2\cup S_3$ of $\R^4$ where 
\begin{align*}
S_0&:=\{x^2-zy^2=0,z>0\},\\
S_1&:=\{x=0,y=0,w=0,z<0\},\\
S_2&:=\{x=0,y=0,w+z=-1,z<0\},\\
S_3&:=\{x=0,y=0,w-z=1,z<0\}.
\end{align*}
The irreducible components of $S$ are $S_0,S_1,S_2$ and $S_3$. The Zariski closure of $S$ is $X:=\{x^2-zy^2=0\}\subset\R^4$. Consider the complexification $\widetilde{X}:=\{x^2-zy^2=0\}\subset\C^4$ and its normalization $\pi:Y:=\C^3\to\widetilde{X},\ (s,t,w)\mapsto(st,s,t^2,w)$. Observe that $\pi^{-1}(S)=\bigcup_{i=0}^6T_i$ where 
$$
\begin{array}{lll}
T_0:=\{(s,t,w)\in\R^3:\ t\neq0\},\\[4pt]
T_1:=\{(0,\sqrt{-1}t,0)\in\C^3:\ t>0\},&T_4:=\{(0,-\sqrt{-1}t,0)\in\C^3:\ t>0\},\\[4pt]
T_2:=\{(0,\sqrt{-1}t,-1+t)\in\C^3:\ t>0\},&T_5:=\{(0,-\sqrt{-1}t,-1+t)\in\C^3:\ t>0\},\\[4pt]
T_3:=\{(0,\sqrt{-1}t,1-t)\in\C^3:\ t>0\}, &T_6:=\{(0,-\sqrt{-1}t,1-t)\in\C^3:\ t>0\}.
\end{array}
$$
In addition $\pi(T_0)=S_0$, $\pi(T_1)=\pi(T_4)=S_1$, $\pi(T_2)=\pi(T_5)=S_2$ and $\pi(T_3)=\pi(T_6)=S_3$. Observe that $T_1\cap T_2\cap T_3=\{(0,\sqrt{-1},0)\}$ and $T_4\cap T_5\cap T_6=\{(0,-\sqrt{-1},0)\}\}$. Consequently, $\pi^{-1}(S)$ has three connected components $C_0:=T_0$, $C_1:=T_1\cup T_2\cup T_3$ and $C_2:=T_4\cup T_5\cup T_6$, while $S$ has four irreducible components.
\end{example}

Before stating the following result we refer the reader to Proposition \ref{neatwirred}.

\begin{prop}\label{dpm2}
Suppose that if $\{X_i\}_{i\geq1}$ are the irreducible components of $X$, we have $S_i=X_i\cap S$ and $\ol{S_i}^{\zar}=X_i$ for $i\geq1$. Then for each $i\geq1$ there exists a connected components $T_i$ of $\pi^{-1}(S)$ such that $\pi(T_i)=S_i$.
\end{prop}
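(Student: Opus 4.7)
The plan is to reduce to Theorem~\ref{dpm} one irreducible component at a time, using the fact that the normalization of a reducible complex analytic space splits as the disjoint union of the normalizations of its irreducible components.

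First, I would match the real irreducible components of $X$ with invariant irreducible components of $\widetilde{X}$. Since each $X_i$ is irreducible, it admits a fundamental system of invariant irreducible complexifications (see \ref{complexification}); concretely, its complex Zariski closure $\widetilde{X}_i$ inside $\widetilde{X}$ is an invariant irreducible complex analytic subset with $\dim_\C\widetilde{X}_i=\dim_\R X_i$ and serves as a complexification of $X_i$. Using the Whitney--Bruhat neighborhood construction \cite[\S8.Prop.11]{wb} (already employed in the proof of Theorem~\ref{tamend}), after replacing $\widetilde{X}$ by a suitable invariant open neighborhood of $X$ I may assume that $\{\widetilde{X}_i\}_{i\geq1}$ is precisely the (locally finite) family of invariant irreducible components of $\widetilde{X}$ and that $\widetilde{X}_i\cap X=X_i$ for every $i\geq1$.

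Next I would decompose the normalization accordingly. Setting $Y_i:=\pi^{-1}(\widetilde{X}_i)$, Remark~\ref{norm1} shows that $(Y_i,\pi|_{Y_i})$ is the normalization of $\widetilde{X}_i$; since $\widetilde{X}_i$ is irreducible, $Y_i$ is connected, and the $Y_i$'s are exactly the connected components of $Y$. In particular each $Y_i$ is open and closed in $Y$. By hypothesis $X_i$ is the Zariski closure of $S_i$ in $M$, and $\widetilde{X}_i$ is an irreducible complexification of $X_i$ with normalization $(Y_i,\pi|_{Y_i})$. Applying Theorem~\ref{dpm} to the irreducible amenable $C$-semianalytic set $S_i$ in this setting produces a connected component $T_i$ of $\pi^{-1}(S_i)\cap Y_i$ with $\pi(T_i)=S_i$.

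Finally I would upgrade $T_i$ to a connected component of $\pi^{-1}(S)$. Because $S\subset X$ and $\widetilde{X}_i\cap X=X_i$,
$$
\pi^{-1}(S)\cap Y_i=\pi^{-1}(S\cap\widetilde{X}_i)=\pi^{-1}(S\cap X_i)=\pi^{-1}(S_i),
$$
so $T_i$ is a connected component of $\pi^{-1}(S)\cap Y_i$; since $Y_i$ is open and closed in $Y$, the set $\pi^{-1}(S)\cap Y_i$ is open and closed in $\pi^{-1}(S)$, and consequently $T_i$ is a connected component of $\pi^{-1}(S)$ with $\pi(T_i)=S_i$. The main obstacle is the initial shrinking step: one must arrange that the real irreducible components of $X$ lift bijectively to invariant irreducible components of $\widetilde{X}$ meeting $X$ exactly in the corresponding $X_i$; once this matching is secured, the rest reduces to routine bookkeeping plus one invocation of Theorem~\ref{dpm} per component.
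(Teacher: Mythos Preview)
Your approach is essentially the one the paper takes: shrink $\widetilde{X}$ via \cite[\S8.Prop.11]{wb} so that the real irreducible components $X_i$ lift to irreducible components $\widetilde{X}_i'$ of (an open piece of) $\widetilde{X}$, pass to the corresponding connected components of the normalization, and then invoke Theorem~\ref{dpm} for each $S_i$.

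There is one slip worth fixing. You set $Y_i:=\pi^{-1}(\widetilde{X}_i)$ and claim via Remark~\ref{norm1} that this is connected and that the $Y_i$'s are the connected components of $Y$. Remark~\ref{norm1} applies to \emph{open} subsets of $\widetilde{X}$, whereas $\widetilde{X}_i$ is a closed irreducible component; in general $\pi^{-1}(\widetilde{X}_i)$ is neither connected nor disjoint from $\pi^{-1}(\widetilde{X}_j)$ (points over $\widetilde{X}_i\cap\widetilde{X}_j$ lie in both). The correct object is the unique connected component $Y_i$ of $Y$ with $\pi(Y_i)=\widetilde{X}_i$, and then one only gets $\pi^{-1}(S)\cap Y_i=\pi^{-1}(S_i)\cap Y_i$ rather than $\pi^{-1}(S_i)$. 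This is exactly what the paper does (with $Y_i'$ a connected component of $Y'=\pi^{-1}(\widetilde{X}')$), and it suffices: $T_i$ is a connected component of $\pi^{-1}(S_i)\cap Y_i=\pi^{-1}(S)\cap Y_i$, and since $Y_i$ is open and closed in $Y$, $T_i$ is a connected component of $\pi^{-1}(S)$.
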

\begin{proof}
Let $\widetilde{X}'\subset\widetilde{X}$ be an open neighborhood of $X$ such that each irreducible component $X_i$ of $X$ is the intersection with $X$ of an irreducible component $\widetilde{X}_i'$ of $\widetilde{X}'$ (see \cite[\S8. Prop. 11]{wb}). 

By Remark \ref{norm1} $(Y':=\pi^{-1}(\widetilde{X}'),\pi|_{Y'})$ is the normalization of $\widetilde{X}'$ and for each $i\geq1$ there exists a connected component $Y'_i$ of $Y'$ such that $\pi(Y'_i)=\widetilde{X}'_i$ and $(Y'_i,\pi|_{Y'_i})$ is the normalization of $\widetilde{X}'_i$. As $S\subset X$, we have $\pi^{-1}(S)\subset Y'$. Consequently, the intersection $\pi^{-1}(S)\cap Y'_i$ is a union of connected components of $\pi^{-1}(S)$.

As $S_i$ is an amenable $C$-semianalytic set, there exists by Theorem \ref{dpm} a connected component $T_i$ of $\pi^{-1}(S)\cap Y'_i$ such that $\pi(T_i)=S_i$. Observe that $T_i$ is a connected component of $\pi^{-1}(S)$, as required. 
\end{proof}

\appendix
\section{Some properties of locally finite families}\label{app}

We recall in this appendix certain properties of locally finite families of a topological space for the sake of completeness.

\begin{lem}\label{neighs}
Let $X$ be a paracompact second countable topological space and let $\{T_k\}_{k\geq1}$ be a locally finite family of subsets of $X$. For each $k\geq1$ let $V_k$ be an open neighborhood of $T_k$ in $X$. Then there exist open neighborhoods $U_k\subset V_k$ of $T_k$ in $X$ such that the family $\{U_k\}_{k\geq1}$ is locally finite in $X$.
\end{lem}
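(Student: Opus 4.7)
The plan is to build a countable locally finite open refinement that is simultaneously adapted to the family $\{T_k\}$ and to the neighborhoods $V_k$, and then use it to shrink the $V_k$.

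First I will exploit the local finiteness of $\{T_k\}$: for every $x\in X$ pick an open neighborhood $W^x$ of $x$ meeting only finitely many of the $T_k$. Since $X$ is paracompact and second countable, the open cover $\{W^x\}_{x\in X}$ admits a countable locally finite open refinement $\{W_j\}_{j\geq1}$. Each $W_j$ is contained in some $W^{x_j}$, so it meets only finitely many $T_k$'s. This is the key auxiliary cover.

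Next, for each $k\geq1$ set $J(k):=\{j\geq1: W_j\cap T_k\neq\varnothing\}$ and define
$$
U_k := V_k \cap \bigcup_{j\in J(k)} W_j.
$$
Clearly $U_k$ is open and $U_k\subset V_k$. To see $T_k\subset U_k$, fix $x\in T_k\subset V_k$: some $W_j$ contains $x$, so $j\in J(k)$, and hence $x\in U_k$.

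It remains to verify that $\{U_k\}_{k\geq1}$ is locally finite in $X$. Fix $x\in X$ and use the local finiteness of $\{W_j\}$ to choose an open neighborhood $W^*$ of $x$ meeting only $W_{j_1},\ldots,W_{j_p}$. If $W^*\cap U_k\neq\varnothing$, then $W^*$ meets some $W_j$ with $j\in J(k)$, so necessarily $j\in\{j_1,\ldots,j_p\}$ and $W_{j_i}\cap T_k\neq\varnothing$ for some $i$. Thus the set of such $k$ is contained in $\bigcup_{i=1}^p\{k\geq1: W_{j_i}\cap T_k\neq\varnothing\}$, which is finite because each $W_{j_i}$ meets only finitely many $T_k$.

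The main obstacle, and the reason for introducing the refinement $\{W_j\}$ rather than working directly with the $W^x$ or with the $V_k$, is the dual requirement on the cover: every member must individually meet only finitely many $T_k$, \emph{and} the family itself must be locally finite. Once both features are available the construction of $U_k$ is forced, and the local finiteness of $\{U_k\}$ follows by the double-indexing argument above.
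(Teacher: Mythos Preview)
Your proof is correct and follows essentially the same approach as the paper's: both take a countable locally finite refinement $\{W_j\}$ of the cover $\{W^x\}$, define $U_k$ as the union of those $W_j$ meeting $T_k$ (intersected with $V_k$), and verify local finiteness via the same double-indexing argument. The only cosmetic difference is that the paper first builds $U_k':=\bigcup_{W_j\cap T_k\neq\varnothing}W_j$, proves $\{U_k'\}$ locally finite, and intersects with $V_k$ at the very end, whereas you intersect with $V_k$ from the start.
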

\begin{proof}
For each $x\in X$ there exists an open neighborhood $W^x$ of $x$ that meets only finitely many $T_k$. The family $\{W^x\}_{x\in X}$ is an open covering of $X$. Thus, it has an open refinement $\{W_\ell\}_{\ell\geq1}$ which is countable and locally finite in $X$. Define $U_k':=\bigcup_{W_\ell\cap T_k\neq\varnothing}W_\ell$ and observe that $T_k\subset U_k'$. We claim: \em The family $\{U_k'\}_{k\geq1}$ is locally finite in $X$\em.

Fix a point $x\in X$ and let $V^x$ be a neighborhood of $x$ which intersects finitely many $W_\ell$, say $W_{\ell_1},\ldots,W_{\ell_r}$. The union $\bigcup_{j=1}^rW_{\ell_j}$ meets only finitely many $T_k$, say $T_{k_1},\ldots,T_{k_s}$. If $k\neq k_1,\ldots,k_s$, the intersection $U_k'\cap V^x=\varnothing$. 

To finish take $U_k:=U_k'\cap V_k$.
\end{proof}

\begin{lem}\label{bigneigh}
Let $X$ be a topological space. Let $\{\Omega_k\}_{k\geq1}$ be a locally finite family of open subsets of $X$ and for each $k\geq1$ let $T_k\subset\Omega_k$ be a closed subset of $\Omega_k$. Let $Y$ be a subset of $X$ and suppose that $T_k\cap Y$ is closed in $Y$ for all $k\geq1$. Then there exists an open neighborhood $\Omega\subset X$ of $Y$ such that $\Omega\cap T_k$ is a closed subset of $\Omega$ for all $k\geq1$.
\end{lem}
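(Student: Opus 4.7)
The plan is to construct $\Omega$ by excising from $X$ a single closed ``obstruction set'' that records exactly where each $T_k$ fails to be closed in the ambient space $X$.

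First I would upgrade the given local finiteness: if $\{\Omega_k\}_{k\geq1}$ is locally finite in $X$, then so is $\{\overline{\Omega_k}\}_{k\geq1}$. Indeed, any open neighborhood $U\subset X$ disjoint from $\Omega_k$ is contained in the closed set $X\setminus\Omega_k$, hence disjoint from $\overline{\Omega_k}$. Consequently $\{\overline{T_k}\}_{k\geq1}$ is locally finite in $X$ as well, since $\overline{T_k}\subset\overline{\Omega_k}$.

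Next, because $T_k$ is closed in the open set $\Omega_k$ we have $T_k=\overline{T_k}\cap\Omega_k$, and therefore
$$
C_k:=\overline{T_k}\setminus T_k=\overline{T_k}\cap(X\setminus\Omega_k)
$$
is the intersection of two closed subsets of $X$, hence closed in $X$. The previous step then guarantees that the family $\{C_k\}_{k\geq 1}$ is locally finite, so
$$
E:=\bigcup_{k\geq1}C_k
$$
is closed in $X$ as a locally finite union of closed sets. I would then use the hypothesis on $Y$ (in the form $\overline{T_k}\cap Y=T_k\cap Y$, which is precisely the way the hypothesis is verified in the application to Proposition~\ref{neatwirred}) to conclude $E\cap Y=\varnothing$, so that $\Omega:=X\setminus E$ is an open neighborhood of $Y$ in $X$.

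Finally, for each $k$ I would check that $\Omega\cap T_k$ is closed in $\Omega$. If $x\in\Omega$ lies in the closure of $\Omega\cap T_k$ within $\Omega$, then $x\in\overline{T_k}\cap\Omega$; since $x\notin E\supseteq\overline{T_k}\setminus T_k$, we obtain $x\in T_k\cap\Omega$, completing the verification. The main obstacle is the very first step, translating local finiteness of the $\Omega_k$'s into local finiteness of the closures $\overline{T_k}$, because one only has $T_k$ closed in $\Omega_k$ (not in $X$); once this upgrade is in place, the closed set $E$ can be defined and the remainder of the argument is a routine chase of inclusions.
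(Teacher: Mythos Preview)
Your argument is correct and in fact produces the very same open set as the paper, only via a cleaner description. The paper sets $E=\bigcup_{k}\cl(T_k)$, $C_k=\bigcup_{j\neq k}(\cl(T_j)\setminus\Omega_j)$ and takes
\[
\Omega=(X\setminus E)\cup\bigcup_{k\geq1}(\Omega_k\setminus C_k),
\]
whereas you take $\Omega=X\setminus\bigcup_{k}(\overline{T_k}\setminus T_k)$. Using $T_k=\overline{T_k}\cap\Omega_k$ one checks directly that these two sets coincide, so the difference is purely in presentation: your single ``obstruction set'' $\bigcup_k(\overline{T_k}\setminus T_k)$ makes both the openness of $\Omega$ and the closedness of $\Omega\cap T_k$ in $\Omega$ immediate, while the paper's decomposition requires a slightly longer case analysis. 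Your preliminary step (passing from local finiteness of $\{\Omega_k\}$ to that of $\{\overline{T_k}\}$) is exactly what the paper uses implicitly when asserting that $E$ and the $C_k$ are closed.

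Your parenthetical remark about the hypothesis is also well taken. Both your proof and the paper's proof use the condition in the form $\overline{T_k}\cap Y=T_k\cap Y$; the paper in fact writes ``As $T_k\cap Y$ is closed in $Y$, we have $\cl(T_k)\cap Y=T_k\cap Y$'', but this implication is not valid in general (take $X=\R$, $\Omega_1=(0,2)$, $T_1=(0,1]$, $Y=\{0\}$, where the conclusion of the lemma itself fails). As you observe, the stronger form $\overline{T_k}\cap Y=T_k\cap Y$ is precisely what is verified in the application to Proposition~\ref{neatwirred}, so no harm is done there; but it is worth recording that this is the hypothesis actually needed.
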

\begin{proof}
As the family $\{\Omega_k\}_{k\geq1}$ is locally finite in $X$, so are the families $\{T_k\}_{k\geq1}$, $\{\cl(T_k)\}_{k\geq1}$ and $\{\cl(T_k)\setminus\Omega_k\}_{k\geq1}$. Thus,
$$
E:=\bigcup_{k\geq1}\cl(T_k)\qquad\text{and}\qquad C_k:=\bigcup_{j\neq k}\cl(T_j)\setminus\Omega_j
$$
are closed subset of $X$. Consider the open subset of $X$
$$
\Omega:=(X\setminus E)\cup\bigcup_{k\geq1}\Omega_k\setminus C_k.
$$
We check first: $Y\subset\Omega$. 

As $T_k\cap Y$ is closed in $Y$, we have $\cl(T_k)\cap Y=T_k\cap Y$, so
$$
C_k\cap Y=\bigcup_{j\neq k}(\cl(T_j)\cap Y)\setminus\Omega_j=\bigcup_{j\neq k}(T_j\cap Y)\setminus\Omega_j=\varnothing,
$$
for all $k\geq1$ and $E\cap Y=\bigcup_{k\geq1}T_k\cap Y$. As $T_k\subset\Omega_k$,
$$
Y\cap\Omega=\Big(Y\setminus\bigcup_{k\geq1}T_k\cap Y\Big)\cup\bigcup_{k\geq1}\Omega_k\cap Y=Y.
$$
Next, we show: \em Each intersection $T_\ell\cap\Omega$ is closed in $\Omega$ for $\ell\geq1$\em. 

As $\Omega$ is open in $X$, we have $\cl_{\Omega}(T_\ell\cap\Omega)=\cl(T_\ell)\cap\Omega$. Thus, it is enough to show \em $\cl(T_\ell)\cap\Omega\subset T_\ell$ for each $\ell\geq1$\em. 

Indeed, since $T_\ell$ is closed in $\Omega_\ell$, we have $\cl(T_\ell)\cap\Omega_\ell=T_\ell$. As $\cl(T_\ell)\setminus\Omega_\ell\subset C_k$ for $k\neq\ell$,
$$
\cl(T_\ell)\cap\Omega=\bigcup_{k\geq1}\cl(T_\ell)\cap(\Omega_k\setminus C_k)\subset\cl(T_\ell)\cap\Omega_\ell\cup\bigcup_{k\neq\ell}\cl(T_\ell)\setminus(\cl(T_\ell)\setminus\Omega_\ell)=T_\ell,
$$
as required.
\end{proof}

\begin{lem}\label{proylc}
Let $\pi:Y\to X$ be a proper map with finite fibers between two topological spaces. Let $\{A_i\}_{i\in I}$ be a locally finite family of $Y$. Then $\{\pi(A_i)\}_{i\in I}$ is a locally finite family of $X$.
\end{lem}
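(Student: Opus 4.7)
The plan is to show local finiteness pointwise: for each $x\in X$ I will construct an open neighborhood $V\subset X$ of $x$ that meets only finitely many $\pi(A_i)$. The basic observation is that $V\cap\pi(A_i)\ne\varnothing$ if and only if $\pi^{-1}(V)\cap A_i\ne\varnothing$, so it suffices to find $V$ such that $\pi^{-1}(V)$ meets only finitely many $A_i$.

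First I would fix $x\in X$ and use that the fiber $\pi^{-1}(x)=\{y_1,\ldots,y_n\}$ is finite. For each $y_j$, local finiteness of $\{A_i\}_{i\in I}$ at $y_j$ provides an open neighborhood $W_j\subset Y$ of $y_j$ that meets only finitely many of the $A_i$. Then $W:=W_1\cup\cdots\cup W_n$ is an open neighborhood of the whole fiber $\pi^{-1}(x)$ and meets only finitely many $A_i$.

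Next I would invoke properness of $\pi$. A proper map (in the sense used throughout the paper: preimages of compact sets are compact, which under the usual Hausdorff hypotheses implies that $\pi$ is closed) sends the closed set $Y\setminus W$ to a closed subset $\pi(Y\setminus W)\subset X$. Since $\pi^{-1}(x)\subset W$, the point $x$ does not lie in $\pi(Y\setminus W)$, so $V:=X\setminus\pi(Y\setminus W)$ is an open neighborhood of $x$ in $X$.

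Finally I would verify $\pi^{-1}(V)\subset W$: if $y\in Y\setminus W$ then $\pi(y)\in\pi(Y\setminus W)$, so $\pi(y)\notin V$. Consequently, any $A_i$ meeting $\pi^{-1}(V)$ must meet $W$, and there are only finitely many such $A_i$. Hence $V$ meets only finitely many $\pi(A_i)$, which proves local finiteness. The only step that requires any care is the passage from a neighborhood of the whole fiber to a saturated neighborhood, and this is exactly what properness (the closedness of $\pi$) delivers; no further obstacle is expected.
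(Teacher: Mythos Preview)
Your argument is correct and follows essentially the same approach as the paper: cover the finite fiber $\pi^{-1}(x)$ by finitely many open sets each meeting only finitely many $A_i$, use that $\pi$ is closed to pass from this neighborhood of the fiber to an open neighborhood $V$ of $x$ with $\pi^{-1}(V)$ contained in that cover, and conclude. The only cosmetic difference is notation; the paper phrases the last step as a proof by contradiction rather than the direct inclusion $\pi^{-1}(V)\subset W$, but the content is identical.
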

\begin{proof}
Let $x\in X$ and write $\pi^{-1}(x):=\{y_1,\ldots,y_r\}$. For each $j=1,\ldots,r$ let $V_j$ be an open neighborhood of $y_j$ that only intersects finitely many $A_i$. Let $C:=Y\setminus\bigcup_{j=1}^rV_j$, which is a closed subset of $Y$. As $\pi$ is proper, $\pi(C)$ is closed. As $\pi^{-1}(x)\cap C=\varnothing$, we have $x\not\in\pi(C)$, so $U:=X\setminus\pi(C)$ is an open neighborhood of $x$. Let us check that \em if $\pi(A_i)\cap U\neq\varnothing$ then there exists $j=1,\ldots,r$ such that $A_i\cap V_j\neq\varnothing$\em. Thus $\{\pi(A_i)\}_{i\in I}$ is a locally finite family of $X$.

Suppose by contradiction that $A_i\cap V_j=\varnothing$ for all $j=1,\ldots,r$. Then $A_i\subset C$, so $\pi(A_i)\subset\pi(C)$ and $\pi(A_i)\cap U\neq\varnothing$, which is a contradiction, as required.
\end{proof}


\begin{thebibliography}{GMT}

\bibitem[ABF]{abf1} F. Acquistapace, F. Broglia, J.F. Fernando: On globally defined semianalytic sets. \em Preprint RAAG \em (2015). {\tt arXiv:1503.00987}

\bibitem[ABS]{abs} F. Acquistapace, F. Broglia, M. Shiota: The finiteness property and \L ojasiewicz inequality for global semianalytic sets. {\em Adv. in Geom.} {\bf5} (2005), 453--466.

%\bibitem[ABT]{abt} F. Acquistapace, F. Broglia, A. Tognoli: Sull'insieme di non-coerenza di un insieme analitico reale. \em Atti Accad. Naz. Lincei Rend. Cl. Sci. Fis. Mat. Natur. \em (8) {\bf55} (1973), 42--45 (1974). 

%\bibitem[ABT2]{abt2} F. Acquistapace, F. Broglia, A. Tognoli: A relative embedding theorem for Stein spaces. \em Ann. Scuola Norm. Sup. Pisa Cl. Sci. \em (4) {\bf2} (1975), no. 4, 507--522. 

%\bibitem[ABR1]{abr2} C. Andradas, L. Br\"ocker, J.M. Ruiz: Minimal generation of basic open semianalytic sets. \em Invent. Math. \em {\bf92} (1988), no. 2, 409--430.

\bibitem[ABR]{abr} C. Andradas, L. Br\"ocker, J.M. Ruiz: Constructible sets in real geometry. \em Ergeb. Math. \em {\bf 33}. Berlin Heidelberg New York: Springer Verlag: 1996. 

\bibitem[AC]{ac} C. Andradas, A. Castilla: Connected components of global semianalytic subsets of 2-dimensional analytic manifolds. \em J. Reine Angew. Math. \em {\bf475} (1996), 137--148.

%\bibitem[AM]{am} M.F. Atiyah, I.G. Macdonald: Introduction to commutative algebra. Addison-Wesley Publishing Co., Reading, Mass.-London-Don Mills, Ont: 1969.

%\bibitem[AS]{as} A. Andreotti, W. Stoll: Analytic and algebraic dependence of meromorphic functions. \em Lecture Notes in Mathematics\em, {\bf234}. Springer-Verlag, Berlin-New York: 1971.

%\bibitem[BS]{bs} C. B{\v{a}}nic{\v{a}}, O. St{\v{a}}n{\v{a}}{\c{s}}il{\v{a}}: Algebraic methods in the global theory of complex spaces. \em Editura Academiei\em, Bucharest; John Wiley \& Sons, London-New York-Sydney: 1976.

\bibitem[dB]{db} P. de Bartolomeis: Algebre di Stein nel caso reale. {\em Rend. Accad. Naz.} {\bf XL} no. 5 1/2 (1975/76), 105--144 (1977).

\bibitem[BM]{bm} E. Bierstone, P.D. Milman: Semianalytic and subanalytic sets. \em Inst. Hautes \'Etudes Sci. Publ. Math. \em {\bf67} (1988), 5--42.

%\bibitem[BM2]{bm2} E. Bierstone, P.D. Milman: Subanalytic geometry. Model theory, algebra, and geometry, 151--172, \em Math. Sci. Res. Inst. Publ.\em, {\bf39}, Cambridge Univ. Press, Cambridge: 2000.

%\bibitem[BCR]{bcr} Bochnak, J.; Coste, M.; Roy, M.-F.: Real algebraic geometry. {\em Ergeb. Math.} {\bf 36}, Springer-Verlag, Berlin: 1998.

%\bibitem[B1]{b1} N. Bourbaki: Algebra I. Chapters 1--3. \em Elements of Mathematics \em (Berlin). Springer-Verlag, Berlin: 1998.

%\bibitem[B2]{b2} N. Bourbaki: Algebra II. Chapters 4--7. \em Elements of Mathematics \em (Berlin). Springer-Verlag, Berlin: 2003.

\bibitem[BP]{bp} F. Broglia, F. Pieroni: The Nullstellensatz for real coherent analytic surfaces. {\em Rev. Mat. Iberoam.} {\bf25} (2009), no. 2, 781--798. 

\bibitem[BC]{bc} F. Bruhat, H. Cartan: Sur les composantes irr\'eductibles d'un sous-ensemble analytique-r\'eel. {\em C. R. Acad. Sci. Paris} {\bf244} (1957), 1123--1126. 

\bibitem[C1]{c1} H. Cartan: S\'eminaires, \em E.N.S. \em 1951/52 et 1953/54.

\bibitem[C2]{c} H. Cartan: Vari\'et\'es analytiques r\'eelles et vari\'et\'es analytiques complexes. {\em Bull. Soc. Math. France} {\bf85} (1957), 77--99.

%\bibitem[C2]{c2} H. Cartan: Sur les fonctions de plusieurs variables complexes: les espaces analytiques. 1960 \em Proc. Internat. Congress Math\em. 1958 pp. 33--52 Cambridge Univ. Press, New York.

%\bibitem[Co]{co} P.M. Cohn: Basic algebra. Groups, rings and fields. Springer-Verlag London, Ltd., London, 2003.

%\bibitem[D]{d} J.P. D'Angelo: Several complex variables and the geometry of real hypersurfaces. \em Studies in Advanced Mathematics\em. CRC Press, Boca Raton, FL: 1993.

%\bibitem[D]{de} Z. Denkowska: La continuit\'e de la section d'un ensemble semi-analytique et compact. \em Ann. Polon. Math. \em {\bf37} (1980), no. 3, 231--242. 

\bibitem[E]{e} G.A. Efroymson: A Nullstellensatz for Nash rings. \em Pacific J. Math. \em {\bf54} (1974), 101--112.

%\bibitem[F]{fe} W. Fensch: Reell-analytische Strukturen. \em Schr. Math. Inst. Univ. M\"unster \em {\bf34} (1966), 56 pages.

\bibitem[FG]{fg} J.F. Fernando, J.M. Gamboa: On the irreducible components of an amenable $C$-semianalytic set. \em Internat. J. Math. \em {\bf23} (2012), no. 4, 1250031, 40 pp.

\bibitem[F]{of} O. Forster: Prim\"arzerlegung in Steinschen Algebren. \em Math. Ann. \em{\bf154} (1964), 307--329.

\bibitem[F]{f} J. Frisch: Points de platitude d'un morphisme d'espaces analytiques complexes. \em Invent. Math. \em {\bf4} (1967) 118--138.

%\bibitem[F]{f} J. Frisch: Points de platitude d'un morphisme d'espaces analytiques complexes. \em Invent. Math. \em {\bf4} (1967) 118--138.

%\bibitem[G]{ga} A. Gabrielov: Projections of semi-analytic sets. \em Functional Anal. Appl.\em, {\bf 2} (1968), no. 4, 282--291. 

%\bibitem[Ga1]{gal} M. Galbiati: Stratifications et ensemble de non-coh\'erence d'un espace analytique r\'eel. \em Invent. Math\em. {\bf34} (1976), no. 2, 113--128.

%\bibitem[Ga2]{gal2} M. Galbiati: Sur l'image d'un morphisme analytique r\'eel propre. \em Ann. Scuola Norm. Sup. Pisa Cl. Sci\em. (4) {\bf3} (1976), no. 2, 311--319. 

%\bibitem[Go]{g} R. Godement: Topologie alg\'ebrique et th\'eorie des faisceaux. \em Actualit'es Sci. Ind. \em {\bf1252}. Publ. Math. Univ. Strasbourg. No. 13 Hermann, Paris: 1958.

\bibitem[GaR]{gare} H. Grauert, R. Remmert: Theory of Stein spaces. Translated from the German by Alan Huckleberry. Reprint of the 1979 translation. \em Classics in Mathematics\em. Springer--Verlag, Berlin: 2004. 

%\bibitem[Gr]{gro} A. Grothendieck: \'El\'ements de g\'eom\'etrie alg\'ebrique. IV. \'Etude locale des sch\'emas et des morphismes de sch\'emas. II. \em Inst. Hautes \'Etudes Sci. Publ. Math. \em {\bf24} (1965) 231 pp.

\bibitem[GMT]{gmt} F. Guaraldo, P. Macr\`{\i}, A. Tancredi: Topics on real analytic spaces. \em Advanced Lectures in Mathematics\em. Friedr. Vieweg \& Sohn, Braunschweig: 1986.

\bibitem[G1]{gu} R.C. Gunning: Introduction to holomorphic functions of several variables. Vol. II. Local theory. The Wadsworth \& Brooks/Cole Mathematics Series. Wadsworth \& Brooks/Cole Advanced Books \& Software, Monterey, CA: 1990.

\bibitem[G2]{gu3} R.C. Gunning: Introduction to holomorphic functions of several variables. Vol. III. Local theory. The Wadsworth \& Brooks/Cole Mathematics Series. Wadsworth \& Brooks/Cole Advanced Books \& Software, Monterey, CA: 1990.

\bibitem[GR]{gr} R. Gunning, H. Rossi: Analytic functions of several complex variables. Englewood Cliff: Prentice Hall: 1965.

%\bibitem[Ha1]{h1} R.M. Hardt: Homology and images of semianalytic sets. \em Bull. Amer. Math. Soc. \em {\bf80} (1974), 675--678.

%\bibitem[Ha2]{h2} R.M. Hardt: Homology theory for real analytic and semianalytic sets. \em Ann. Scuola Norm. Sup. Pisa Cl. Sci. \em (4) {\bf2} (1975), no. 1, 107--148. 

%\bibitem[H1]{hi1} H. Hironaka: Introduction aux ensembles sous-analytiques. \em R\'edig\'e par Andr\'e Hirschowitz et Patrick Le Barz. Singularit\'es \`a Carg\`ese \em (Rencontre Singularit\'es en G\'eom. Anal., Inst. \'Etudes Sci., Carg\`ese, 1972), pp. 13--20. \em Asterisque\em, {\bf7} et {\bf8}, Soc. Math. France, Paris, 1973.

%\bibitem[H2]{hi2} H. Hironaka: Subanalytic sets. \em Number theory, algebraic geometry and commutative algebra\em , in honor of Yasuo Akizuki, pp. 453--493. Kinokuniya, Tokyo: 1973.

%\bibitem[H3]{hi3} H. Hironaka: Introduction to real-analytic sets and real-analytic maps. \em Quaderni dei Gruppi di Ricerca Matematica del Consiglio Nazionale delle Ricerche\em. Istituto Matematico ``L. Tonelli'' dell'Universit\`a di Pisa, Pisa: 1973.

%\bibitem[H4]{hi4} H. Hironaka: Stratification and flatness. Real and complex singularities (\em Proc. Ninth Nordic Summer School/NAVF Sympos. Math.\em, Oslo, 1976), pp. 199--265. Sijthoff and Noordhoff, Alphen aan den Rijn: 1977.

%\bibitem[HS]{hs} C. Huneke, I. Swanson: Integral closure of ideals, rings, and modules. \em London Mathematical Society Lecture Note Series\em, {\bf336}. Cambridge University Press, Cambridge, 2006.

%\bibitem[JP]{jp} T. de Jong, G. Pfister: Local analytic geometry. Basic theory and applications. {\em Advanced Lectures in Mathematics}. Friedr. Vieweg \& Sohn, Braunschweig: 2000.

%\bibitem[K]{k} K. Kurdyka: Des applications du th\'eor\`eme de Puiseux dans la th\'eorie des ensembles semi-analytiques dans $R^2$. \em Univ. Iagel. Acta Math. \em {\bf26} (1987), 105--113.

\bibitem[L]{la} E. Lasker: Zur Theorie der moduln und Ideale. \em Math. Ann. \em {\bf60} (1905), no. 1, 20--116. 

%\bibitem[\L1]{l} S. \L ojasiewicz, Ensembles semi-analytiques, Cours Facult\'e des Sciences d'Orsay, \em Mimeographi\'e I.H.E.S.\em, Bures-sur-Yvette, July, 1965. {\tt http://perso.univ-rennes1.fr/michel.coste/Lojasiewicz.pdf}

%\bibitem[\L2]{l1} S. \L ojasiewicz, Triangulation of semi-analytic sets. \em Ann. Scuola Norm. Sup. Pisa \em (3) {\bf18} (1964) 449--474.

%\bibitem[M]{ma} H. Matsumura: Commutative algebra. Second edition. \em Mathematics Lecture Note Series\em, {\bf56}. Benjamin/Cummings Publishing Co., Inc., Reading, Mass.: 1980.

\bibitem[M]{m} T. Mostowski: Some properties of the ring of Nash functions. \em Ann. Scuola Norm. Sup. Pisa Cl. Sci. \em (4) {\bf3} (1976), no. 2, 245--266.

%\bibitem[Na]{na} M. Nagata: Local rings. \em Interscience Tracts in Pure and Applied Mathematics\em, {\bf13} Interscience Publishers a division of John Wiley \& Sons, New York-London: 1962. 

\bibitem[N1]{n} R. Narasimhan: Introduction to the theory of analytic spaces, {\em Lecture Notes in Math.} {\bf 25}, Springer-Verlag, Berlin-New York: 1966.

%\bibitem[N2]{n2} R. Narasimhan: Analysis on real and complex manifolds. {\em Advanced Studies in Pure Mathematics}, {\bf 1} Masson \& Cie, Éditeurs, Paris; North-Holland Publishing Co., Amsterdam: 1968.

\bibitem[N3]{n3} R. Narasimhan: A note on Stein spaces and their normalisations. \em Ann. Scuola Norm. Sup. Pisa \em (3) {\bf16} (1962), 327--333.

%\bibitem[N2]{n4} R. Narasimhan: Imbedding of holomorphically complete complex spaces. \em Amer. J. Math\em. {\bf82} (1960) 917--934. 

%\bibitem[O]{ok} Oka, K.: Sur les fonctions analytiques de plusieurs variables. VIII. Lemme fondamental. \em J. Math. Soc. Japan \em {\bf3} (1951) 204--214.

%\bibitem[P]{p} A. Parusi\'nski: Lipschitz properties of semi-analytic sets. Ann. Inst. Fourier (Grenoble) {\bf38} (1988), no. 4, 189--213.

%\bibitem[Pa]{pa} W. Paw\l ucki: Sur les points r\'eguliers d'un ensemble semi-analytique. \em Bull. Polish Acad. Sci. Math. \em {\bf32} (1984), no. 9--10, 549--553. 

%\bibitem[Q]{q} R. Quarez: The Noetherian space of abstract locally Nash sets. {\em Comm. Algebra} {\bf26} (1998), no. 6, 1945--1966.

\bibitem[RS]{rs} R. Remmert, K. Stein: \"Uber dei wesentlichen Singularit\"aten analytischer Mengen. \em Math. Ann. \em {\bf126}, (1953) 263--306.

\bibitem[R]{r} J.J. Risler: Sur l'anneau des fonctions de Nash globales. \em C. R. Acad. Sci. Paris S\'er\em. A-B {\bf276} (1973), A1513--A1516.

\bibitem[Rz1]{rz1} J.M. Ruiz: On the real spectrum of a ring of global analytic functions. \em Alg\`ebre\em, 84--95, \em Publ. Inst. Rech. Math. Rennes\em, 1986-4, Univ. Rennes I, Rennes, 1986.

\bibitem[Rz2]{rz2} J.M. Ruiz: On the connected components of a global semianalytic set. \em J. Reine Angew. Math. \em {\bf392} (1988), 137--144.

\bibitem[Rz3]{rz3} J.M. Ruiz: On the topology of global semianalytic sets. Real analytic and algebraic geometry (Trento, 1988), 237--246, \em Lecture Notes in Math.\em, {\bf1420}, Springer, Berlin, 1990.

%\bibitem[S]{s} J. Stasica: Smooth points of a semialgebraic set. \em Ann. Polon. Math. \em {\bf82} (2003), no. 2, 149--153. 

%\bibitem[TT]{tt} A. Tancredi, A. Tognoli: On a decomposition of the points of noncoherence of a real analytic space. \em Riv. Mat. Univ. Parma \em (4) {\bf6} (1980), 401--405 (1981).

\bibitem[T]{t} A. Tognoli: Propriet\`a globali degli spazi analitici reali. \em Ann. Mat. Pura Appl. \em (4) {\bf75} (1967) 143--218.

\bibitem[WB]{wb} H. Whitney, F. Bruhat: Quelques propi\'et\'es fondamentales des ensembles analytiques r\'eels. \em Comment. Math. Helv. \em {\bf33} (1959), 132--160. 

\end{thebibliography}
\end{document}